\newtheorem{thm}{THEOREM}[section]
\newtheorem{cor}[thm]{COROLLARY}
\newtheorem{lem}[thm]{LEMMA}
\newtheorem*{lem*}{LEMMA}
\theoremstyle{definition}
\numberwithin{equation}{section}
\newcommand{\awd}{AW_{\Delta}}
\newcommand{\taw}{\widetilde{AW}}
\newcommand{\tawd}{\widetilde{AW}_{\!\!\Delta}}
\newcommand{\A}{\mathcal{A}}
\def\ZZ{{\mathbb Z}}\setcounter{secnumdepth}{6}
\def\Zee{\mathbb{Z}}
\def\Id{\operatorname{Id}}
\def\cE{{\mathcal E}}
\def\cO{{\mathcal O}}
\def\cP{{\mathcal P}}
\def\cS{{\mathcal S}}
\def\FF{{\mathbb F}}
\def\ZZ{{\mathbb Z}}
\def\be{\begin{equation}}
\def\ee{\end{equation}}
\def\ba{\begin{eqneqnarray}}
\def\ea{\end{eqneqnarray}}
\def\tilde{\widetilde}
\def\e1{\epsilon}
\def\AAl{\mathcal{A}_{\lambda}}
\def\A0{\stackrel{\circ}{\AAl}}
\def\o1{\omega}
\def\01{\Omega}
\def\c1{\gamma}
\def\g1{\Sigma}
\def\l1{\Lambda}
\def\v1{\varphi}
\def\d1{\delta}
\def\f1{\frac}
\def\t1{\theta}
\def\b1{\beta}
\def\bar{\overline}
\def\bs{\begin{eqneqnarray*}}
\def\es{\end{eqneqnarray*}}
\def\m1{\Theta}
\def\w1{\wedge}
\def\tx{\tilde{x}}
\def\ee{\epsilon}
\begin{document}

\title{A Cochain Level Proof of Adem Relations in the Mod 2 Steenrod Algebra \thanks{A.M-M acknowledges financial support from Innosuisse grant \mbox{32875.1 IP-ICT - 1} and the Max Planck Institute for Mathematics in Bonn.}}

\author{Greg Brumfiel \and Anibal Medina-Mardones \and John Morgan}

\institute{G. Brumfiel \at
	Department of Mathematics, Stanford University, 450 Serra Mall, Palo Alto, CA 94305, USA \\
	\email{brumfiel@math.stanford.edu} \and
	A. Medina-Mardones \at
	Laboratory of Topology and Neuroscience, \'Ecole Polytechnique F\'ed\'erale de Lausanne, Station 8, CH-1015, Switzerland.\\
	\email{ammedmar@mpim-bonn.mpg.de} \and
	J. Morgan \at
	Department of Mathematics, Columbia University, 116th Street, New York City, NY 10027, USA \\
	\email{jmorgan@math.columbia.edu}
}

\authorrunning{G. Brumfiel, A. Medina-Mardones, J. Morgan}

\maketitle

\begin{abstract}
	In 1947, N.E. Steenrod defined the Steenrod Squares, which are mod 2 cohomology operations, using explicit cochain formulae for cup-$i$ products of cocycles. He later recast the construction in more general homological terms, using group homology and acyclic model methods, rather than explicit cochain formulae, to define mod $p$ operations for all primes $p$. Steenrod's student J. Adem applied the homological point of view to prove fundamental relations, known as the Adem relations, in the algebra of cohomology operations generated by the Steenrod operations. In this paper we give a proof of the mod 2 Adem relations at the cochain level. Specifically, given a mod 2 cocycle, we produce explicit cochain formulae whose coboundaries are the Adem relations among compositions of Steenrod Squares applied to the cocycle, using Steenrod's original cochain definition of the Square operations.
\end{abstract}

\section {Introduction}

The primary goal of this paper is to produce explicit coboundary formulae that yield the Adem relations between compositions of Steenrod Squares.
There are three main ingredients in our constructions.
The first ingredient is a combinatorial result that calculates the classical $\smallsmile_n$ two-variable cochain operations of Steenrod in the standard model $B\g1_2$ of the classifying space of the symmetric group $\g1_2$.
Our result implies the known calculation of Steenrod Squares in the cohomology of $B\g1_2$, without using the Cartan product formula, but it does more.
The second ingredient is the construction of certain very specific chain homotopies between pairs of chain maps from chains on $BV_4$ to chains on $B\Sigma_4$, where $V_4 \subset \Sigma_4$ is the normal subgroup of order 4 in the symmetric group.
Chain homotopies produce explicit formulae that write differences of cycles as boundaries.
The third ingredient is the exploitation of certain operadic multivariable cochain operations that extend Steenrod's two-variable $\smallsmile_n$ operations.
The operad cochain operations allow boundary formulae in the chains on $B\g1_4$ to be interpreted as coboundary relations between cocycles on any space $X$, leading to the Adem relations.
All these ingredients will be previewed in the introduction, and carefully developed in the paper.\\

The motivation for our work arose as follows.
In the work \cite{7brumfielmorgan}, on the Pontrjagin dual of 4-dimensional Spin bordism, two of the authors of this paper studied a simplicial set model of a three-stage Postnikov tower that represents this Pontrjagin dual functor.
To construct the tower as an explicit simplicial set, rather than just as a homotopy type, we made use of a degree 5 cochain $x(a)$, produced by the other author, with $dx(a) = Sq^2Sq^2(a) + Sq^3Sq^1(a)$ for a degree 2 cocycle $a$.
We wanted to understand a simplicial set delooping of that Postnikov tower related to 5-dimensional Spin bordism.
To accomplish that, we needed a degree 6 cochain $x(\alpha)$ with $dx(\alpha) = Sq^2Sq^2(\alpha) + Sq^3Sq^1(\alpha)$, for a degree 3 cocycle $\alpha$.
Despite the low degree, there does not seem to be a simple coboundary formula for even that Adem relation.
Rather than grind out by direct computer computation a coboundary formula in that one case, we embarked on our project of finding structured coboundary formulae for all Adem relations among Steenrod Squares.\\

The Spin bordism project itself, along with the 3-dimensional version in \cite{6brumfielmorgan} and the combinatorial description of $Pin^-$ structures on triangulated manifolds in \cite{8brumfielmorgan}, resulted from questions from the physicist Anton Kapustin about finding discrete cochain/cocycle level descriptions of all invariants of the Spin bordism of classifying spaces of finite groups $G$.
His questions were related to topics in condensed matter physics, connected to Spin bordism classification of principal $G$-bundles over low dimensional triangulated Spin manifolds, regarded as discrete lattice models of various phenomena \cite{11gaiottokapustin}, \cite{12kapustin}.\\

In this introductory section of the paper we give some historical background, state our main results, and roughly outline the proofs.

\subsection {Steenrod Operations}

\paragraph{0.1.1} The initial approach to cohomology operations dates back to the 1940's and 1950's.
We denote the normalized chain complexes and cochain complexes with $\FF_2$ coefficients of a simplicial set by $N_*(X)$ and $N^*(X)$.
Steenrod defined his Squares in terms of certain natural cochain operations $\smallsmile_i \colon N^p(X) \otimes N^q(X) \to N^{p+q-i}(X)$.\footnote{In this introduction, we grade cohomology in positive degrees, because that is more familiar. But in the rest of the article we follow the more appropriate convention that grades cohomology in negative degrees.} 
These operations arise by applying duality to a natural equivariant chain map $\tawd \colon N_*(E\g1_2) \otimes N_*(X) \to N_*(X) \otimes N_*(X)$ constructed by Steenrod in \cite{23steenrodproducts}.
Here, $E\g1_2$ is a specific contractible $\g1_2$-space and $\g1_2$ acts in the obvious way on the domain and switches the two factors of the range.
For a cocycle $\alpha$ of degree $k$, $Sq^{k-i}([\alpha]) = [\alpha \smallsmile_{i} \alpha]$, where $[\alpha]$ denotes the cohomology class of $\alpha$.\\

Restricted to $N_0(E\g1_2) \otimes N_*(X) = \FF_2[\g1_2] \otimes N_*(X) \to N_*(X) \otimes N_*(X)$, the map $\tawd$ is an equivariant extension of the classical Alexander-Whitney diagonal approximation.
We will describe Steenrod's map $\tawd$ in \S2.2.3 and \S2.2.4, but the theory of acyclic models easily implies equivariant extensions of any diagonal approximation do exist, and any two are equivariantly chain homotopic.
Thus all choices of equivariant maps extending some diagonal approximation lead to cochain operations like the $\smallsmile_i$ and these all define the same cohomology operations $Sq^{k-i}([\alpha])$.

\paragraph{0.1.2} Steenrod realized early on that a more general theory of cohomology operations could be formulated using the language of homology of groups, \cite{25steenrodsymmetric}.
For any group $G$ and a contractible free $G$-space $EG$, the homology of $BG$ is given by the homology of the coinvariant quotient complex $N_*(EG)_G = N_*(G \backslash EG) = N_*(BG)$, defined by setting $\tilde x \equiv g \tilde x$, all $g \in G$, $\tilde x \in N_*(EG)$.
The cohomology of $BG$ is given by the cohomology of the invariant subcomplex $N^*(EG)^G$.\\

Before sketching Steenrod's group homology approach to cohomology operations, we need a brief discussion of products.
When working with products of spaces, one needs to decide what chain complexes will be used to compute homology and cohomology.
You can use normalized chain complexes of the product spaces or you can use tensor products of complexes associated to the factors.
These choices are related by chain equivalences $AW \colon N_*(X \times Y) \to N_*(X) \otimes N_*(Y)$ of Alexander-Whitney, and $EZ\colon N_*(X) \otimes N_*(Y) \to N_*(X \times Y)$ of Eilenberg-Zilber.
Moreover, these chain equivalences are functorial in $X$ and $Y$ and associative.\\

If $X = Y$ the $EZ$ map is equivariant with respect to the interchange of factors, but the $AW$ map is not.
There are various ways to extend the $AW$ map in an equivariant manner.
For example, following Dold \cite{10dold}, one can prove using acyclic models that there exist $\g1_n$-equivariant chain homotopy equivalences
\begin{equation*}
\taw \colon N_*(E\g1_n) \otimes N_*(X \times \cdots \times X) \to N_*(X) \otimes \cdots \otimes N_*(X).
\end{equation*}
We can precompose such a map with $\Id \otimes \Delta_*$, where $\Delta \colon X \to X^n$ is the diagonal $x\, \mapsto (x, \dots, x)$.
Thus, there are equivariant cochain maps
\begin{equation*}
N^*(X)^{\otimes n} \xrightarrow{\taw^*} N^*(E\g1_n) \otimes N^*(X^n ) \xrightarrow {\Id \otimes \Delta^*} N^*(E\g1_n) \otimes N^*(X).
\end{equation*}
These maps restrict to maps of invariant subcomplexes.
If $\alpha \in N^k(X)$ is a cocycle then $\alpha^{\otimes n}$ is an invariant cocycle, which maps to a cocycle in the invariant complex $(N^*(E\g1_n) \otimes N^*(X))^{\g1_n}$, by the above composition.
The cohomology of the invariant complex $(N^*(E\Sigma_n) \otimes N^*( X))^{\Sigma_n}$ is $H^*(B\Sigma_n) \otimes H^*(X)$.\\

Thus, in the composition above, the invariant cocycle $ \alpha^{\otimes n}$ produces a cohomology class of degree $nk$ in $H^*(B\g1_n) \otimes H^*(X)$, which evaluates on homology classes in $H_i(B\g1_n) \otimes H_{nk-i}(X)$, giving a map $H_i(B\g1_n) \to H^{nk-i}(X)$.
This was Steenrod's construction, defining cohomology classes in $H^{nk-i}(X)$ as functions of $[x] \in H_i(B\g1_n)$ and $[\alpha] \in H^k(X)$.
The non-zero element $[x_i] \in H_i(B\g1_2)$ gives rise to the Steenrod Square $Sq^{k-i} ([\alpha])$.\\

A great advantage of the homology of groups approach was that it immediately led to Steenrod's reduced $p^{th}$ power operations $P^i$ of degree $2i(p-1)$, defined on the cohomology of spaces with $\FF_p$ coefficients for odd primes $p$.
These operations corresponded to certain elements in the $\FF_p$ homology of the classifying spaces of cyclic groups of order $p$ that map non-trivially to generators of the $\FF_p$ homology of the symmetric group $\g1_p$.
The specific construction mimics the discussion above, using Dold's $\taw$ map and invariant cocycles $\alpha^{\otimes p}$ in normalized cochain complexes with $\FF_p$ coefficients.\\

Then came the great work by Wu, Thom, Cartan, Serre, Adem, and others establishing all the important properties of Steenrod Squares and reduced $p^{th}$ powers as cohomology operations, such as the Cartan product formula and the Adem relations between certain sums of compositions of Steenrod operations.
It was proved that the Steenrod operations, along with cup products, generate \textit{all} natural cohomology operations defined on cohomology of spaces with $\FF_p$ coefficients.\footnote{For odd primes $p$, it is also necessary to include as a Steenrod operation the Bockstein operator $\beta$ of degree 1 associated to the coefficient sequence $0 \to \ZZ/p \to \ZZ/p^2 \to \ZZ/p \to 0$. When $p = 2$, $\beta = Sq^1$.}
The underlying cochains were pushed to the background, including Steenrod's original $\smallsmile_n$ operations in the $p = 2$ case.
The Adem relations for all $p$ were related to computations in the homology of $\g1_{p^2}$, or to computations of the cohomology of Eilenberg-Maclane spaces $K(\ZZ/p, n)$.\\

\paragraph{0.1.3} It was certainly understood in the 1950's that if one had explicit cochain formulae for the chain equivalences $AW$, $\taw$, and $EZ$, and if one exploited specific homotopies to the identity of maps $B\g1_{p^2} \to B\g1_{p^2}$ defined by conjugations, and if one had a few other explicit chain homotopies, then one could produce cochain level proofs of the Cartan formula and the Adem relations.\\

However, such chain level manipulations were not really feasible.
Not only that, but acyclic $G$-spaces $EG$ and the operations $AW$, $\taw$, $EZ$ were not even uniquely defined, but rather only up to certain kinds of homotopies, so what would cochain level proofs even mean?
The method of acyclic models implied the existence of these operations with certain homological properties, and methods emphasizing homology and cohomology, rather than chains and cochains, were in vogue.
The cohomological proofs exploited the freedom to choose different contractible spaces $EG$ with free $G$ actions, for various groups $G$.
It was understood, by the method of acyclic models, that any two contractible models for $EG$ were equivariantly homotopy equivalent, and any two equivariant maps between such models were equivariantly homotopic.
This gave the impression, forcefully stated by some authors, that cochain formulae were unnatural, and obscured the true issues.

\paragraph{0.1.4} But it turns out that there are greatly preferred choices of models of acyclic free $G$ spaces $EG$, the $AW$ and $EZ$ maps, and in the case $p = 2$ a preferred choice of the cochain operations $\smallsmile_i$, or equivalently the map $\tawd$.
In fact, the preferred choice of the $\smallsmile_i$ is Steenrod's original definition.
The second author has emphasized the important properties of Steenrod's definition of the $\smallsmile_i$, and has given axioms characterizing that choice in \cite{20medinaaxiomatic}.\\

The theory of operads extends Steenrod's $\smallsmile_i$'s to multivariable chain and cochain operations that bring the chains for symmetric groups $N_*(B\g1_n)$ into prominence, not just the homology.
Chains $x \in N_i(B\g1_n)$ determine natural cochain operations $N^k(X ) \to N^{nk - i}(X )$, linear in $x$.
Cycles $x$ take cocycles to cocycles.
If two cycles in $N_*(B\g1_n)$ differ by an explicit boundary, then the resulting cocycle operations applied to a cocycle differ by an explicit coboundary.
Precisely, $(\partial x)(\alpha) = d(x(\alpha))$.\\ 

It is our belief that use of the preferred models and preferred cochain formulae actually makes proofs of things like the Adem relations for $p=2$ easier to fully understand.
That is essentially our goal in this paper.
Our exposition in the paper is rather leisurely, and the classical constructions due originally to Steenrod will all be explained.
We will also explain the operad methods that produce cochain and cocycle operations with $\FF_2$ coefficients.

\subsection{Adem Relations}

\paragraph{0.2.1} Now we will return to the discussion of our results on Adem relations.
Of course $B\g1_2 \simeq RP^\infty$, the real projective space.
One of our main results is a cochain level computation of the $\smallsmile_n$ products in $N^*(B\g1_2)$.
First it is well-known, and easy, that in a standard model the cochain algebra $N^*(B\g1_2) = \FF_2[t]$, where $t \in N^1(B\g1_2)$ is dual to $x_1$, and $t^p \in N^p(B\g1_2)$ is dual to $x_p$.
The class $x_p$ is homologically represented by the real projective space $RP^p$.
Our first main result is this:

\begin{thm} \label{binom1}
	$t^i \smallsmile_n t^j = \binom{i}{n}\binom{j}{n} t^{i+j-n} \in N^*(B\Sigma_2).$
\end{thm}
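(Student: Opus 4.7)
The plan is to verify the formula by evaluating both sides on the unique nondegenerate simplex $x_{i+j-n}$ of the standard minimal simplicial model of $B\Sigma_2$, since the cochain ring $N^*(B\Sigma_2) = \FF_2[t]$ is determined by its values on the cells $x_m$.

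First I would fix a concrete simplicial model, for instance $B\Sigma_2 = K(\ZZ/2,1)$ whose $m$-simplices are tuples $(\epsilon_1, \dots, \epsilon_m) \in (\ZZ/2)^m$, so that $x_m$ corresponds to $(1,1,\dots,1)$ and the cochain $t^p$ evaluates on an $m$-simplex by extracting a specified $p$-face and reading off the product of its entries (consistent with $t^p$ being the $p$-fold iterated Alexander--Whitney cup product $t \smallsmile \cdots \smallsmile t$). Under this convention, $t^p(x_m)$ vanishes when $m>p$ and equals $1$ when $m=p$.

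Next I would substitute these cochains into Steenrod's original combinatorial formula for $(\alpha \smallsmile_n \beta)(\sigma)$, which expresses the value as a sum over certain pairs of monotone sequences of vertex indices (the ``interleavings'' on $\{0,1,\dots,i+j-n\}$ that share exactly $n{+}1$ positions in Steenrod's prescribed alternating pattern). Applied to $\sigma = x_{i+j-n}$, each summand $t^i(d_H\sigma)\, t^j(d_K\sigma)$ is either $0$ or $1$, so the calculation reduces to counting, modulo $2$, the number of interleavings whose two selected faces are simultaneously the nondegenerate $i$- and $j$-simplices in the model.

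The main obstacle is this final combinatorial count. I expect it to decouple into two essentially independent choices---selecting which $n$ of the $i$ vertex positions used by the $t^i$ factor are shared with the $t^j$ factor, and independently which $n$ of the $j$ positions are shared from the other side---producing the product $\binom{i}{n}\binom{j}{n}$ directly. Any interleavings not of this form should contribute zero (either because they force a degenerate face, or because they pair up and cancel modulo~$2$), leaving precisely the coefficient $\binom{i}{n}\binom{j}{n}$ on $x_{i+j-n}$ and hence the equality $t^i \smallsmile_n t^j = \binom{i}{n}\binom{j}{n}\, t^{i+j-n}$ in $N^*(B\Sigma_2)$.
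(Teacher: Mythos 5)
Your overall strategy is the same as the paper's: evaluate $t^i\smallsmile_n t^j$ on the fundamental cell $x_{i+j-n}$ of the minimal model of $B\Sigma_2$, expand via Steenrod's diagram (interleaving) formula, observe each summand is $0$ or $1$, and count the surviving diagrams mod $2$. The gap is that the surviving-diagram count --- which is the entire mathematical content of the theorem --- is left as an expectation, and the mechanism you propose for it is not correct as stated. The count does not literally decouple into ``choose which $n$ of the $i$ vertex positions are shared'' on each side, and no pairing/cancellation mod $2$ occurs: non-contributing diagrams vanish individually because one of the two faces of $x_{i+j-n}$ is degenerate in the normalized complex, not because they cancel in pairs.

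What is actually needed is, first, the degeneracy criterion in $B\Sigma_2$ (Remark 1.4.2 of the paper): a face of $x_N$ obtained by deleting interior blocks of vertices is non-degenerate precisely when every deleted interior block has even length, while initial and terminal deletions are unrestricted. Feeding this into Steenrod's diagram formula shows that a diagram $(I_1,\dots,I_{n+2})$ contributes exactly when $i+1=\sum_{k\ \mathrm{odd}}|I_k|$ and $j+1=\sum_{k\ \mathrm{even}}|I_k|$ are positive ordered partitions satisfying parity constraints (all interior terms even; which end terms are unconstrained depends on the parity of $n$). Since the odd-row and even-row partitions are chosen independently, the count is a product of two partition counts --- that much of your decoupling intuition is right --- but identifying each factor with $\binom{i}{n}$, respectively $\binom{j}{n}$, holds only modulo $2$ and requires a separate argument: the paper does it with the generating-function congruence $(1+x^2+x^4+\cdots)^M\equiv(1+x+x^2+\cdots)^{2M}\pmod 2$, reducing constrained-parity partition counts to ordinary binomial coefficients (Combinatorial Facts 1 and 2 and Lemma 5.2). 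Over the integers these partition counts are not $\binom{i}{n}$ and $\binom{j}{n}$, so a direct bijection with pairs of $n$-element subsets, as you anticipate, cannot work; without the parity analysis and the mod $2$ counting lemma your argument does not close.
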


We derive this result purely combinatorially from Steenrod's definition of the $\smallsmile_i$.
In fact, what we really evaluate is Steenrod's equivariant chain map $\tawd \colon N_*(E\g1_2) \otimes N_*(B\g1_2) \to N_*(B\g1_2) \otimes N_*(B\g1_2)$, and the cochain statement follows by duality.
Our result includes the evaluation of the Squares in projective space, $Sq^k(t^i) = \binom{i}{k}t^{i+k}$, a result that is usually deduced as a consequence of the Cartan formula for the evaluation of Squares on cup products of cohomology classes.
We do not use the Cartan formula, and we obtain more, namely the evaluation of all $\smallsmile_n$ products in a specific cochain complex $N^*(B\g1_2)$.\\

The binomial coefficients in Theorem~\ref{binom1} correspond to rather simple binomial coefficient formulae for counting various kinds of partitions of integers, either exactly or mod 2.
There is no need for strenuous binomial coefficient manipulations that one often finds in proofs of Adem relations.\footnote{An analogue of Theorem~\ref{binom1} for an equivariant map $\tawd \colon N_*(EC_p; \FF_p) \otimes N_*(BC_p; \FF_p) \to N_*(BC_p; \FF_p)^{\otimes p}$, where $C_p$ is the cyclic group of order $p$, seems to be the main obstacle to extending our results to odd primes $p$.} 

\paragraph{0.2.2} Following Adem \cite{1adem}, we study classifying spaces $BV_4 \subset BD_8 \subset B\g1_4$ at the simplicial set level, where $\g1_4$ denotes the symmetric group, $D_8$ is the dihedral group, and $V_4$ is the normal Klein 4-group $V_4 \subset \g1_4$ containing the three products of disjoint 2-cycles.
Of course $V_4 \simeq \g1_2 \times \g1_2$, generated by any two non-identity elements.
The dihedral group, of which there are three conjugate versions in $\g1_4$, contains other copies of $\g1_2 \times \g1_2$.
The $D_8$ we work with contains the commuting 2-cycles $b =(12)$ and $c = (34)$, and is generated by these elements along with $a = (13)(24) \in V_4$.
We will work with the generators $\{a, bc\} \in V_4 \simeq \g1_2 \times \g1_2$.
Note that conjugation in $\g1_4$ by the element $(23)$ interchanges the generators $a = (13)(24)$ and $bc = (12)(34)$ of $V_4.$\footnote{In the first few sections of the paper we use the disjoint cycle notation to name permutations.
Later on, it will be more natural to name a permutation $\sigma \in \Sigma_n$ as a function, written as a sequence $(\sigma(1) \sigma(2) \cdots \sigma(n))$.}\\

The homology $H_*(BV_4) \simeq H_*(B\g1_2 \times B\g1_2)$ is generated by products of projective spaces $ RP^q \times RP^p$.
In specific simplicial chain complexes, these generators are given by $x_q \times x_p = EZ(x_q \otimes x_p)$, where $EZ$ is the Eilenberg-Zilber map $$EZ\colon N_*(B\g1_2) \otimes N_*(B\g1_2) \to N_*(B\g1_2 \times B\g1_2) \simeq N_*(BV_4).$$ It is important to point out that the map $EZ$ is pretty complicated, so $x_q \times x_p$ is actually a sum of quite a large number of basic elements.
Essentially, $EZ$ amounts to triangulating prisms.\\

With the models of contractible $G$ spaces $EG$, and classifying spaces $BG$, that we use, inclusions of groups $H \subset G$ yield inclusions of simplicial sets $EH \subset EG$ and $BH \subset BG$, hence inclusions of normalized chain complexes, $N_*(EH) \subset N_*(EG)$ and $N_*(BH) \subset N_*(BG)$.
We thus have many such inclusions corresponding to our chosen subgroups $\Sigma_2 $'s $\subset V_4 \subset D_8 \subset \Sigma_4$.
If we name an element in some set associated to one of these groups, we will generally use the same name for that element viewed in the similar set associated to a larger group.\footnote{It does not seem all that controversial to call by only one name an element of a subset of various other sets, and it saves substantially on notation.} 

\paragraph{0.2.3} The cycle $x_q \times x_p \in N_{q+p}(BV_4) \subset N_{q+p}(BD_8) \subset N_{q+p}(B\Sigma_4)$ corresponds, following \S0.1.4, to a cocycle operation.
We construct an explicit $V_4$-equivariant chain homotopy $J_\Psi \colon N_*(EV_4) \to N_{*+1}(ED_8)$, which induces on coinvariants a chain homotopy $\bar{J}_\Psi \colon N_*(BV_4) \to N_{*+1}(BD_8)$, so that the cycle $x_q \times x_p + \partial \bar{J}_\Psi(x_q \times x_p) \in N_{p+q}(BD_8)$ homologous to $x_q \times x_p$ corresponds to a very specific operation expressed in terms of iterated Squares and $\smallsmile_i$ products of Squares.
The chain homotopy $J_\Psi$ is constructed using the preferred choices of $AW$ and $EZ$ maps, along with Steenrod's definition of the $\smallsmile_i$, and our explicit calculation of $\smallsmile_i$ products in $N^*(B\g1_2)$.
Although the main result concerning cocycle operations is first expressed entirely in terms of $\smallsmile_i$ products, it will be more familiar to express it in terms of Square operations on a cocycle $\alpha \in N^n(X)$.
For fixed cocycle degree $n$, the cocycle operation associated to $x_q \times x_p$ has degree $3n - (q+p)$.
Here is a precise statement.

\begin{thm} \label{Adem1}
	For a cocycle $\alpha \in N^n(X)$ we have
	\begin{equation*}
	(x_q \times x_p)( \alpha) + d(\bar{J}_\Psi (x_q \times x_p)(\alpha))\, =\
	\sum_\ell\ \binom{p-\ell}{p-2\ell} Sq^{2n-q-\ell}Sq^{n - p +\ell} (\alpha) + d (N_{q,p,n}(\alpha)),
	\end{equation*}
	where
	\begin{equation*}
	N_{q,p,n}(\alpha) \ =
	\sum_{\substack{0< a\le \ell \in \Zee[1/2];\ \\ a\equiv \ell\ mod\ \Zee}}\binom{p-\ell-a}{p-2\ell} \binom{p-\ell +a}{p-2\ell}Sq^{n -p+\ell +a}(\alpha)\ \smallsmile_{q-p+2\ell+1}\ Sq^{n -p + \ell -a}(\alpha).
	\end{equation*}
	By symmetry, we also have an explicit formula
	\begin{equation*}
	(x_p \times x_q)(\alpha) + d (\bar{J}_\Psi(x_p \times x_q)(\alpha))\, =\
	\sum_\ell\ \binom{q-\ell}{q-2\ell} Sq^{2n-p-\ell}Sq^{n - q +\ell} (\alpha) \ + \
	d (N_{p,q,n}(\alpha)).
	\end{equation*}
\end{thm}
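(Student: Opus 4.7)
The plan is to execute Adem's classical strategy at the cochain level, using the chain homotopy $\bar J_\Psi$ to replace his acyclic-model argument with an explicit formula. The operadic principle $(\partial x)(\alpha) = d(x(\alpha))$ recalled in \S0.1.4 reduces the theorem to evaluating the cycle $y := x_q \times x_p + \partial \bar J_\Psi(x_q \times x_p) \in N_{p+q}(BD_8)$, which is homologous to $x_q \times x_p$ by construction, under Steenrod's map $\widetilde{AW}_\Delta$, and identifying the resulting cochain operation with $\sum_\ell \binom{p-\ell}{p-2\ell}\, Sq^{2n-q-\ell}Sq^{n-p+\ell}(\alpha)$ modulo the explicit coboundary $d N_{q,p,n}(\alpha)$.

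The crucial structural input is the wreath-product decomposition $D_8 = \Sigma_2 \wr \Sigma_2$, whose outer $\Sigma_2$ is generated by $a = (13)(24)$ and whose inner $\Sigma_2 \times \Sigma_2$ is generated by $b = (12)$ and $c = (34)$. Restricted to chains coming from the wreath stratum of $BD_8$, the map $\widetilde{AW}_\Delta$ factors iteratively: the two inner $\Sigma_2$-factors each apply a copy of Steenrod's construction to $\alpha$, and the outer $\Sigma_2$-factor applies a third copy to the result. A cell of outer degree $q - p + 2\ell$ and inner bi-degree $(p - \ell, p - \ell)$ therefore evaluates to $Sq^{2n - q - \ell}\, Sq^{n - p + \ell}(\alpha)$, producing the summand indexed by $\ell$; the three pieces sum to the correct total chain degree $q + p$ for each admissible $\ell$.

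The combinatorial heart of the proof lies in the multiplicities and the residual terms. Theorem~\ref{binom1} is invoked both in the design of $\bar J_\Psi$ and in the bookkeeping of the expansion of $y(\alpha)$: applied at the outer $\Sigma_2$ via $t^i \smallsmile_n t^j = \binom{i}{n}\binom{j}{n}\, t^{i+j-n}$, it forces the coefficient $\binom{p-\ell}{p-2\ell}$ in front of each iterated-square summand, while its use at the inner $\Sigma_2$-factors on pairs of Squares of different indices produces the asymmetric coefficients $\binom{p-\ell-a}{p-2\ell}\binom{p-\ell+a}{p-2\ell}$ appearing in $N_{q,p,n}(\alpha)$, with the parameter $a$ measuring the imbalance between the two inner copies of $\alpha$ and the constraint $a \equiv \ell \pmod{\Zee}$ encoding an outer-parity condition. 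The symmetric formula for $(x_p \times x_q)(\alpha)$ is then immediate: conjugation by $(23) \in \Sigma_4$ is inner and acts trivially on $H_*(B\Sigma_4)$, but exchanges the generators $a$ and $bc$ of $V_4$ and therefore swaps $x_q \times x_p$ with $x_p \times x_q$; rerunning the same analysis with $p$ and $q$ interchanged yields the second identity. The principal obstacle is the cochain-level bookkeeping that $\bar J_\Psi$ must implement: despite the fact that $EZ(x_q \otimes x_p)$ is a very large sum of cells and $\widetilde{AW}_\Delta$ further expands each cell, the homotopy has to be engineered so that, after adding its boundary, every surviving cell either aligns with the wreath stratum of $BD_8$ (producing one of the clean $Sq\, Sq$ terms) or falls into one of the cup-$i$-of-Squares terms of $N_{q,p,n}(\alpha)$, with no orphan contributions.
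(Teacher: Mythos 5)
Your outline follows the same route as the paper (evaluate the cycle $x_q \times x_p + \partial \bar{J}_\Psi(x_q \times x_p)$, use the $\Sigma_2 \wr \Sigma_2$ structure so that triple tensors act as iterated $\smallsmile_i$'s, get the binomial coefficients from Theorem~\ref{binom1}), but it stops short of the two computations that actually constitute the proof. First, you never say what $\bar{J}_\Psi$ is or why adding its boundary turns $x_q \times x_p$ into anything computable: your closing sentence, that the homotopy ``has to be engineered so that \ldots with no orphan contributions,'' is precisely the assertion that has to be proved, not a remark about bookkeeping. In the paper, $J_\Psi$ is the canonical join homotopy of Theorems~\ref{t:homotopy for classifying spaces} and~\ref{t:homotopy for iota and Psi} between the inclusion $\iota_*$ and the explicit map $\Psi = EZ \circ \Phi \circ AW$ with $\Phi = (\Id \otimes \tawd)\circ(\awd \otimes \Id)$; since $x_q \times x_p$ is a cycle and $AW \circ EZ = \Id$, this gives the exact identity $x_q \times x_p + \partial \bar{J}_\Psi(x_q \times x_p) = EZ(\bar\Phi(x_q \otimes x_p))$ (Corollary~\ref{cor3.6}). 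Without that identity, or some substitute for it, nothing in your argument links the left-hand side of the theorem to the explicit cells you describe.

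Second, the expansion itself is asserted rather than derived, and the one mechanism you do sketch is misattributed. In the paper both families of coefficients come from a single computation: $\awd(\tilde x_q) = \sum_i \tilde x_i \otimes T^i \tilde x_{q-i}$ followed by $\tawd(\tilde x_{q-i} \otimes x_p)$, whose coefficient on $x_j \otimes x_{p+q-i-j}$ is $\binom{j}{q-i}\binom{p+q-i-j}{q-i}$; the diagonal case $j = p-\ell$ yields $\binom{p-\ell}{p-2\ell}$ and the off-diagonal case yields $\binom{p-\ell-a}{p-2\ell}\binom{p-\ell+a}{p-2\ell}$ --- not one coefficient from the ``outer'' and one from the ``inner'' $\Sigma_2$'s as you state. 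More importantly, you give no reason why the off-diagonal terms contribute only a coboundary. That step is the passage to $\Sigma_2$-coinvariants, where $[\tilde x_r \otimes y \otimes z] + [\tilde x_r \otimes z \otimes y] = \partial[\tilde x_{r+1} \otimes y \otimes z]$ because $\partial \tilde x_{r+1} = \tilde x_r + T\tilde x_r$; this pairing produces the exact term $\partial \widehat{NS}_{q,p}$ of Corollary~\ref{cor3.3} and explains the index shift to $\smallsmile_{q-p+2\ell+1}$ in $N_{q,p,n}$, which your proposal quotes but does not account for. Likewise, the claim that on the ``wreath stratum'' a triple tensor acts as $(\,\cdot \smallsmile_s \cdot\,)\smallsmile_r(\,\cdot \smallsmile_t \cdot\,)$ is exactly Lemma~\ref{l:4.1} together with the Table Reduction compatibility of Section 4; it is the operad action, not $\tawd$ itself, that evaluates chains of $N_*(B\Sigma_4)$ on $\alpha^{\otimes 4}$, and this needs to be invoked or proved rather than assumed. (A minor point: the second formula requires no conjugation by $(23)$ --- it is literally the same computation with $p$ and $q$ exchanged; the inner automorphism enters only in Theorem~\ref{Adem2}.)
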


In these formulae, $d$ is the cochain coboundary in $N^*(X)$.
The existence of the homotopies $J_\Psi$ and $\bar{J}_\Psi$ is a special case of a general result, Theorem~\ref{t:homotopy for classifying spaces}, proved in \S1.3.1.
A general formula for these homotopies is given in Theorem~\ref{t:homotopy for iota and Psi}.
Study of the underlying $V_4$-equivariant map in our special case, $\Psi \colon N_*(EV_4) \to N_*(ED_8)$, is carried out in the subsections of Section 3.
This is where our explicit calculation of Steenrod's map $\tawd \colon N_*(E\g1_2) \otimes N_*(B\g1_2) \to N_*(B\g1_2) \otimes N_*(B\g1_2)$ gets used.

\paragraph{0.2.4} The purely cohomological part of Theorem~\ref{Adem1}, that is, the terms involving compositions of Squares, was established by Adem.
But the proof was not easy, and was rather hidden by his use of chain homotopies that were not made explicit.
Our proof, even at the cochain level with coboundary terms, is quite transparent once the $\smallsmile_n$ products in $B\g1_2$ are computed.
The binomial coefficients in Theorem~\ref{Adem1} are essentially just repetitions of the binomial coefficients in Theorem~\ref{binom1} with different values of $i,j,n$.

\paragraph{0.2.5} We have observed in \S0.2.2 that conjugation by $(23) \in \g1_4$ interchanges our two chosen $\g1_2$'s in $V_4 \simeq \g1_2 \times \g1_2$.
But an inner automorphism $c_g$ of a group $G$ gives rise to a very specific equivariant homotopy $J_g$ on $EG$ that induces a homotopy $\bar{J}_g$ between $\Id \colon BG \to BG$ and $Bc_g \colon BG \to BG$.
In our case, we then have a formula at the cycle level$$x_q \times x_p + x_p \times x_q = \partial \bar{J}_{(23)} (x_q \times x_p) \in N_*(B\Sigma_4).$$
For $g\in G$, we give the general formula for $ J_g$ in \S1.3.3.\\

We apply this last equation to a cocycle $\alpha$ and add that equation to the two equations in Theorem~\ref{Adem1}.
The result is the following cochain level formula:

\begin{thm} \label{Adem2}
	For a cocycle $\alpha \in N^n(X)$ we have
	\begin{multline*}
	\sum_\ell \binom{q-\ell}{q-2\ell} Sq^{2n-p-\ell} Sq^{n-q+\ell} (\alpha) \ + \ 
	\sum_\ell \binom{p-\ell}{p-2\ell} Sq^{2n-q-\ell} Sq^{n-p+\ell} (\alpha) \\ = \
	d \Big( (\bar{J}_\Psi (x_p \times x_q)(\alpha) +
	N_{p, q,n}(\alpha) +
	\bar{J}_\Psi (x_q \times x_p)(\alpha) +
	N_{q, p, n}(\alpha) +
	\bar{J}_{(23)} (x_q \times x_p)(\alpha) \Big).
	\end{multline*}
\end{thm}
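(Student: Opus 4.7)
The plan is to deduce the identity by summing the two equations of Theorem~\ref{Adem1} with a third equation produced from inner conjugation by the transposition $(23) \in \Sigma_4$, and then exploiting mod-$2$ cancellation. Theorem~\ref{Adem1} already expresses $(x_q \times x_p)(\alpha)$ and $(x_p \times x_q)(\alpha)$ as the required sums of iterated Squares, modulo explicit coboundaries built from $\bar{J}_\Psi$ and the correction terms $N_{q,p,n}(\alpha)$ and $N_{p,q,n}(\alpha)$. What remains is to eliminate these two operadic terms against each other.

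First I would establish the cycle-level identity
\begin{equation*}
x_q \times x_p + x_p \times x_q \;=\; \partial \bar{J}_{(23)}(x_q \times x_p) \in N_*(B\Sigma_4).
\end{equation*}
This rests on the general construction, to be carried out in \S1.3.3, that assigns to each inner automorphism $c_g$ of a group $G$ a canonical $G$-equivariant chain homotopy $J_g$ on $N_*(EG)$ between the identity and the action of $c_g$, descending to a homotopy $\bar{J}_g$ on $N_*(BG)$. Taking $G = \Sigma_4$ and $g = (23)$, the observation from \S0.2.2 that $c_{(23)}$ swaps the generators $a$ and $bc$ of $V_4 \simeq \Sigma_2 \times \Sigma_2$ forces $(Bc_{(23)})_*(x_q \times x_p) = x_p \times x_q$; since $x_q \times x_p$ is a cycle, the defining chain-homotopy relation read over $\FF_2$ yields the displayed identity.

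Next I would transport this identity to cochains via the operadic principle of \S0.1.4: a cycle $x \in N_*(B\Sigma_n)$ defines a natural cochain operation satisfying $(\partial x)(\alpha) = d(x(\alpha))$ for any cocycle $\alpha$. This upgrades the cycle identity to
\begin{equation*}
(x_q \times x_p)(\alpha) + (x_p \times x_q)(\alpha) \;=\; d\bigl(\bar{J}_{(23)}(x_q \times x_p)(\alpha)\bigr).
\end{equation*}
Adding this to the two equations of Theorem~\ref{Adem1} and observing that over $\FF_2$ each of $(x_q \times x_p)(\alpha)$ and $(x_p \times x_q)(\alpha)$ now appears exactly twice and therefore cancels, one arrives at the stated equality: the two Adem sums of compositions of Squares on the left, equated to the coboundary of the indicated five-term cochain on the right. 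The substantive work of the proof has already been absorbed into Theorems~\ref{binom1} and~\ref{Adem1}; the only new ingredient required here is the construction and basic equivariance of the inner-conjugation homotopy $\bar{J}_{(23)}$, which will be packaged in \S1.3.3, and this is the main (and modest) remaining obstacle.
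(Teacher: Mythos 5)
Your proposal is correct and follows essentially the same route as the paper: establish the cycle identity $x_q \times x_p + x_p \times x_q = \partial \bar{J}_{(23)}(x_q \times x_p)$ from the inner-automorphism homotopy of \S1.3.3, evaluate it on $\alpha$ via the operadic principle $(\partial x)(\alpha) = d(x(\alpha))$ (made precise in the paper through Table Reduction in \S4.4.4), and add it to the two equations of Theorem~\ref{Adem1} so the $(x_q \times x_p)(\alpha)$ and $(x_p \times x_q)(\alpha)$ terms cancel mod 2. This is exactly the argument outlined in \S0.2.5 and carried out via Formula \eqref{e:3.6} in \S3.5.
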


And there you have it, a very specific cocycle/coboundary formulation of Adem relations.\\

One interesting thing about the equivariant chain homotopies $J_\Psi$ and $J_{(23)}$ is that they are essentially given by exactly the same kind of formula, arising from a rather general common underlying situation.
Specifically, we mentioned above that any two equivariant maps between contractible free $G$ complexes are equivariantly homotopic.
But with preferred models there is even a canonical way to choose equivariant homotopies.
This will all be explained in due course.\\

Of course to fully explain Theorems~\ref{Adem1} and~\ref{Adem2}, we need to clarify exactly how elements of $N_*(B\g1_4)$ act on cocycles.
This will be accomplished using operad methods, which we begin discussing in Subsection~\ref{Operads}.

\paragraph{0.2.6} We want to emphasize that the overall structure of our proof of the Adem relations follows very closely the original proof of Adem \cite{1adem}.
As mentioned previously, Adem proved the cohomological part of Theorem~\ref{Adem1}.
Our combinatorial result Theorem~\ref{binom1} provides cochain details Adem lacked.
In addition, in his proof of the cohomological part of Theorem~\ref{Adem1}, Adem did not need an explicit $V_4$-equivariant chain homotopy like our $J_\Psi$, only that such a thing existed.
Finally, Adem also used the inner automorphism by the element $(23) \in \g1_4$ to show that the elements $x_q \times x_p$ and $x_p \times x_q$ that are distinct in the homology of $V_4$ map to the same element in the homology of $\g1_4$, hence differ by a coboundary and define the same cohomology operation.
He did not have the structured method using operads to generate specific coboundary formulae, but just the fact that a cohomology operation is determined by a homology class in the symmetric group surely corresponded  to some theoretically possible behind the scenes explicit coboundary computations.

\paragraph{0.2.7} The relations in Theorem~\ref{Adem2} do not look like the usual Adem relations that express inadmissible compositions $Sq^aSq^b$ of Steenrod Squares, with $ a < 2b$, as sums of admissible compositions.
The easiest way to recover the usual Adem relations is to use the fact that the Steenrod Squares are stable cohomology operations, that is, they commute with cohomology suspension.
One looks at a high suspension $s^N[\alpha] \in H^{n+N}(S^N \wedge X)$ and identifies one of the relations for degree $n+N$ cocycles in Theorem~\ref{Adem2} so that the binomial coefficients give exactly the usual Adem relation for $Sq^aSq^b(s^N[\alpha)].$\footnote{We found this trick in some course notes of J. Lurie \cite{13lurie}, although probably goes back to Adem.} We include this argument in an appendix.

\paragraph{0.2.8} Theorem~\ref{Adem2} produces, for each cocycle degree $n$ and pair $(p, q)$, a relation between Steenrod operations of total degree $3n - (p+q)$.
With $n$ and the sum $p+q$ fixed, these relations are highly redundant.
We do not see how to use linear combinations of these relations to single out a preferred coboundary formula for a given Adem relation on degree $n$ cocycles.
The desuspension trick of \S0.2.7 also does not single out preferred coboundary formulae for Adem relations.
Of course two different coboundary expressions for a fixed relation differ by a cocycle, related to the indeterminacy of secondary cohomology operations, and that might be something interesting to pursue.

\subsection{Operads} \label{Operads}

\paragraph{0.3.1} In the early 2000's the Surj operad was introduced, along with actions on tensors of cochains generalizing Steenrod's $\smallsmile_n$ operations.
The Surj operations are multivariable cochain operations, computed as sums of products of evaluations of different cochains on faces of simplices (coface operations).
The sums are parameterized by certain diagrams.
We review the sum over diagrams formulae for the $\smallsmile_n$ in Subsection 2.2 and for the more general Surj operations in \S4.3.2.\\

The results of this paper lead to computer algorithms for producing explicit Surj operad formulae that can be used to define cochains whose boundaries are Adem relations, in the form stated in Theorem~\ref{Adem2}.
The formulae quickly involve sums of a very large number of terms as the degrees of the relations and the cocycles increase.
 We do believe that it is a theoretical advance to have the better understanding of Adem relations that we establish in this paper.
But it has limited practical use.

\paragraph{0.3.2} We will briefly review how operad methods give rise to an action of $N_*(E\g1_n)$ as multivariable cochain operations.
We take this up in greater detail in Section~\ref{sect4}.
Given a space $X$, we make use of three operads and operad morphisms\footnote{Strictly speaking we only use the $\FF_2$ coefficient versions of these operads.}
$$\cE \xrightarrow{TR} \cS \xrightarrow{Eval} End(N^*(X)).$$
The first is the Barratt-Eccles operad, with $\cE_n = N_*(E\g1_n)$, the normalized chains on the classical MacLane model of a contractible $\g1_n$ simplicial set.
The second operad is the Surj operad, also called the Step operad or the Sequence operad, whose construction and action on tensors of cochains directly generalizes Steenrod's original construction of the $\smallsmile_i$ products.
The third operad $End(C^*)$ for a cochain complex $C^*$ has operad components $End(C^*)_n = \ \mathrm{Hom}((C^*)^{\otimes n}, C^*)$, the multilinear cochain operations.
We remark that our constructions are applicable to any Barratt-Eccles algebra, i.e., a chain complex $A$ together with an operad morphism $\cE \to End(A)$.
The Adem cochains described in this paper are in this general case only as constructive as the morphism defining the algebra structure.

\paragraph{0.3.3} The operad $\cE$ was introduced in \cite{2barratteccles}, and studied in great detail by Berger and Fresse \cite{3bergerfresse}, \cite{4bergerfresse}.
The Surj operad $\cS$ was more or less simultaneously introduced by Berger and Fresse and by McClure and Smith \cite{18mccluresmith}.
The operad components $\cS_n$ are free acyclic $\g1_n$ chain complexes with $\FF_2$ basis in degree $k$ named by surjections $s \colon\{1, 2, \ldots, n+k\} \to \{1, 2, \ldots, n\}$ with $s(i) \not= s(i+1)$, all $i$.
Each such surjection gives rise to a multivariable cochain operation $N^*(X)^{\otimes n} \to N^*(X)$ that lowers total degree by $k$, (when cochains are graded positively), and these operations define the operad morphism $Eval$.
When $n = 2$, the surjection named $(1212 \ldots)$ with $2+k$ entries corresponds to Steenrod's operation $\smallsmile_k$.
The operad morphism $TR$ is called table reduction and was introduced by Berger and Fresse as a way to explain how simplices in $E\g1_n$, which can be regarded as tables consisting of some number of permutations, act on tensors of cochains, via the Surj operad.
The important thing for us is that our chain homotopies $J_\Psi \colon N_*(EV_4) \to N_{*+1}(E\g1_4)$ and $J_{(23)} \colon N_*(E\g1_4) \to N_{*+1}(E\g1_4)$ take values in $\cE$.
But by $TR$ they are pushed over to $\cS$, and then to cochain operations.
We cannot see the needed chain homotopies directly in $\cS$.

\paragraph{0.3.4} In the paper \cite{19medinacartan}, the second author carried out a program, somewhat similar to the joint program here, for finding an explicit coboundary formula implying the Cartan product formula for Steenrod Squares.
In fact, both that work and the work in this paper originated when we were working on the paper \cite{7brumfielmorgan} and needed a specific coboundary formula for the relation $Sq^2([a]^2) = (Sq^1[a])^2$ for a cocycle $a$ of degree 2, which is simultaneously a Cartan relation and an Adem relation.
Since that time, we have figured out our much more structured explanations of both the Cartan formula and the Adem relations in general for the mod 2 Steenrod algebra.

\paragraph{0.3.5}

We did apply our computer algorithm to Theorem~\ref{Adem2} with $(q, p) = (4,1)$ to find a formula $dx(\alpha) = Sq^2Sq^2(\alpha) + Sq^3Sq^1(\alpha)$ for a degree 3 cocycle $\alpha$.
The class $x(\alpha)$ desuspends to the class we used in \cite{7brumfielmorgan} for the corresponding Adem relation in one lower dimension.\\

In general, Theorem~\ref{Adem2} with $(q, p) = (4,1)$ ultimately produces, for each cocycle $\alpha$ of degree $n$, a formula writing a certain sum of Steenrod operations of total degree $3n - 5$ applied to $\alpha$ as the coboundary of a cochain $x(\alpha)$ of degree $4n-6$.
The Surj formula used to define the cochain  $x(\alpha)$ is a sum of 116 surjections $\{1, 2, \ldots, 10\} \to \{1,2,3,4\}$ applied to the multi-tensor $\alpha^{\otimes 4}$, via the operad morphism $Eval$.
Each such surjection term gives a sum of coface operations parameterized by certain diagrams.
The number of diagrams, but not the surjections themselves, depends on $n$.\\

When $n = 3$, the diagrams for the Surj formula for the degree 6 cochain $x(\alpha)$ all yield 0, except for 26 of the 116 surjections $\{1, 2, \ldots, 10\} \to \{1,2,3,4\}$.
The number of diagrams, or coface expressions, associated to these 26 surjections is around 100.
So the coface operation formula for $x(\alpha)$ with $dx(\alpha) = Sq^2Sq^2(\alpha) + Sq^3Sq^1(\alpha)$ is quite complicated, even for cocycles $\alpha$ of degree 3.

\paragraph{0.3.6} The paper \cite{9chataurlivernet} also makes use of operad methods to study actions of the mod 2 Steenrod algebra.
Those authors' goal was not to produce explicit coboundary formulae for the Cartan and Adem relations, but rather to develop a universal operadic treatment of contexts in which the Steenrod algebra acts, or equivalently, in which the Cartan and Adem relations hold for some family of operations, somewhat similar to what May carried out in his paper \cite{17maygeneeral}.\\
  
We believe our work is related to results that connect operads and higher structures on cochain algebras to homotopy theory, as in \cite{14mandellpadic}, \cite{15mandellinteger}, \cite{16mayoperads}, \cite{22smith}.
It is clear that our proofs of Theorems \ref{Adem1} and \ref{Adem2}, especially the construction of the equivariant chain map $\Psi\colon N_*(EV_4) \to N_*(E\g1_4)$ and the equivariant chain homotopy $J_\Psi\colon N_*(EV_4) \to N_{*+1}(E\g1_4)$ between $\Psi$ and the inclusion map, bring into the open certain useful structure inside the Barratt-Eccles operad that has perhaps not previously been noticed.

\section*{Acknowledgment}

We thank the referee for their careful reading of this work and for many valuable suggestions improving its exposition.

\section{Simplicial Sets and Classifying Spaces}
In this section we review some basic facts about simplicial sets, their chain and cochain complexes, and some facts about classifying spaces $BG$ for groups arising from acyclic spaces $EG$ with free $G$ actions.
We also explain how using the join operation, the Eilenberg-Zilber and Alexander-Whitney maps leads to explicit chain homotopies between maps whose target is the chains on $EG$.

\subsection{Simplicial Sets and Normalized Chain Complexes}

\paragraph{1.1.1} Recall that a simplicial set $X$ consists of a collection of sets $\{X_n\}_{n \geq 0}$ indexed by the natural numbers, together with face and degeneracy operators.
If $X$ is a simplicial set and $\sigma$ is in $X_n$, the set of ``$n$-simplices" of $X$, then there is a unique simplicial map $s \colon \Delta^n \to X$ sending the top-dimensional simplex to $\sigma$, where $\Delta^n$ is the $n$-simplex, regarded as a simplicial set with standard face and degeneracy operators.
In particular, the vertices of $\Delta^n$ are $\{0 < 1 < \cdots < n\}$ and the elements of $\Delta^n_k$ are non-decreasing sequences $(i_0, i_1, \ldots, i_k)$ of vertices.
The degenerate simplices are those that repeat at least one vertex.
The degenerate $n$-simplices $\sigma \in X_n$ are images of degenerate $n$-simplices of $\Delta^n$ under the associated maps $s \colon \Delta^n \to X$.
Products of simplicial sets are simplicial sets with $(X \times Y)_n = X_n \times Y_n$, with the product face and degeneracy operators.
We sometimes use the word space for a simplicial set.
Maps of spaces always means maps of simplicial sets.

\paragraph{1.1.2} Naturally associated to simplicial sets are various chain and cochain complexes, which have homology and cohomology groups.
For any simplicial set $X$, let $N_*(X)$ denote the chain complex of normalized chains with $\FF_2$ coefficients.
Thus in degree $n$, $N_n(X)$ is the $\FF_2$ vector space with basis the set of $n$-simplices $X_n$, modulo the subspace generated by the degenerate simplices.
 
\paragraph{1.1.3} The boundary operator $\partial$ on $N_*(X)$ is defined on a basic $n$-simplex to be the sum of the codimension one faces.
Given $\sigma \in X_n$ there is the unique simplicial map $s\colon \Delta^n \to X$ with $s(\Delta^n) = \sigma$.
We can thus define the boundary formula universally by $$\partial(i_0,i_1, \ldots, i_n) = \sum_{j=0}^n\ \partial_j(i_0, i_1, \ldots, i_n) = \sum_{j=0}^n\ (i_0, \ldots \widehat{i_j} \ldots, i_n),$$ where $\widehat{i_j} $ means $i_j$ is deleted.
It is easy to check that if $\sigma$ is a degenerate simplex then $\partial(\sigma) = 0$.

\paragraph{1.1.4} The normalized cochains are defined as the dual chain complex $N^*(X) = Hom(N_*(X), \FF_2)$, where we regard $\FF_2$ as a chain complex concentrated in degree 0.
Cochains thus lower degree, so the natural grading then is to regard $N^*(X)$ as a chain complex concentrated in negative degrees, and this will be our convention.\footnote{Negative means $\leq 0$ and strictly negative is $< 0$.
The negative grading of cochains, which is the correct way to do it, does require some extra thought at times by those accustomed to positive grading.} Since cochains are in negative degrees, the adjoint coboundary operator $d$ on $N^*(X)$, also lowers degree by one.
So everything is a chain complex, and cochain complex just means a chain complex concentrated in negative degrees.

\paragraph{1.1.5} The Eilenberg-Zilber functorial chain map $EZ \colon N_*(X) \otimes N_*(Y) \to N_*(X \times Y)$ with $\FF_2$ coefficients will play a prominent role in the paper.
By naturality, the general definition follows from the case $X= \Delta^n,\ Y = \Delta^m$.
In this case, we simply triangulate the prism $\Delta^n \times \Delta^m$ as the simplicial complex underlying the product of posets $(0 < 1< \cdots < n) \times (0 < 1< \cdots < m)$.
The map $EZ$ takes the tensor product of the universal simplices of dimensions $n$ and $m$ to the sum of the maximal dimension simplices of the product space.
These $n+m$ dimensional simplices correspond to strictly increasing vertex sequences in the product poset order, $(00 = i_0j_0 < i_1j_1 < \cdots < i_{n+m}j_{n+m} = nm)$, where at each step one of the indices increases by 1 and the other is unchanged.
One can prove that $EZ$ is a chain map either by working directly with these simplices or by thinking geometrically and using $\partial (\Delta^n \times \Delta^m) = \partial(\Delta^n) \times \Delta^m \cup \Delta^n \times \partial(\Delta^m)$.
We are working with $\FF_2$ coefficients throughout, so orientation signs are irrelevant.\\

In the case $m = 1$, denote by $EZ_h$ the composition $EZ_h(z) = EZ(z \otimes (0,1))$ displayed below,
\begin{equation*}
N_*(Z) \xrightarrow{\Id \otimes (0,1)} N_*(Z ) \otimes N_1(\Delta^1) \xrightarrow{EZ} N_{*+1}(Z \times \Delta^1).
\end{equation*}
Then $EZ_h$ is a (classical) universal chain homotopy between top and bottom faces of cylinders,
\begin{equation*}
(EZ_h \circ \partial + \partial \circ EZ_h) (z) = (z, (1, \ldots, 1)) + (z, (0, \ldots, 0)) \in N_*(Z \times \Delta^1).
\end{equation*}
By naturality, it suffices to verify this when $Z = \Delta^n$ and $z = (0, 1, \ldots, n)$.
Since $EZ$ is a chain map, we have
\begin{equation*}
\partial \circ EZ(z \otimes (0,1)) = EZ(\partial z \otimes (0,1)+ z \otimes 1 + z \otimes 0))
\end{equation*}
and the result is clear.\\

On the other hand, to make this more precise, one can write out sums for $EZ_h(z)$ and  $\partial \circ EZ_h(z)$ in the chain homotopy formula.
The $n+1$ dimensional simplices of the convex prism $\Delta^n \times \Delta^1$ are joins of simplices on the bottom and top faces,
\begin{equation*}
(00,10, \ldots, j0) * (j1, \ldots, n1) = (00, \ldots, j0, j1, \ldots, n1).
\end{equation*}
Then $EZ_h(z ) = \sum_j (00,10, \ldots, j0) * (j1, \ldots, n1)$ and $\partial EZ_h(z) = \sum_j \partial \big( (00,10, \ldots, j0) * (j1, \ldots, n1) \big)$.
Since the join operation on simplices just writes the vertices of one simplex after those of another it is easy to understand the boundary of the join of two simplices,
\begin{equation*}
\partial (a * b) = \partial a * b + a * \partial b + \epsilon(a) \, b + a \, \epsilon(b),
\end{equation*}
where on the right the join operation is extended bilinearly and where $\epsilon \colon N_*(\Delta^n)\to \FF_2$ is the linear map defined by $\epsilon(v) =1$ for vertices and is $0$ for positive degree simplices.\\

The join operation on simplices extends  to a linear map of degree one $* \colon N_*(\Delta^n) \otimes N_*(\Delta^n) \to N_*(\Delta^n)$, and the boundary formula above holds for all $a \otimes b$.
It follows immediately that
\begin{equation*}
(\partial * + * \partial )(a \otimes b) = \epsilon(a) b + a \epsilon (b).
\end{equation*}
The functorial Alexander-Whitney diagonal chain map $\awd \colon N_*(Z) \to N_*(Z) \otimes N_*(Z)$ will also play a prominent role.
Again by naturality it suffices to consider $Z = \Delta^n$ and $z = (0, \ldots, n)$.
The formula is
\begin{equation*}
\awd(0, \ldots n) = \sum_j \ (0, \ldots j) \otimes (j, \ldots n).
\end{equation*}
For arbitrary simplicial sets $Z$ this gives the well-known sum of front faces of simplices tensored with back faces.
A direct computation shows that $\awd$ is a chain map.

\paragraph{1.1.6} Consider a connected simplicial set $X$ for which a degree one join operation $* \colon N_*(X) \otimes N_*(X) \to N_*(X)$ has meaning, and satisfies the boundary formula $\partial (a \ast b) = \partial a \ast b + a \ast \partial b + \epsilon(a)\, b + a\, \epsilon(b)$.
Suppose we have two degree zero chain complex morphisms $\phi_0, \phi_1 \colon N_*(Z) \to N_*(X)$ for some simplicial set $Z$.
Given a simplex generator $z \in Z_n$, let $^j \partial z$ mean front faces and let $\partial^{n-j}z$ mean back faces, of degrees $j$ and $n-j$.
Consider
\begin{equation*}
J_\Phi (z) = \sum_j \ \phi_0(^j\partial z) * \phi_1 (\partial^{n-j} z) \in N_{*+1}(X).
\end{equation*}

\begin{lem} \label{l:join for homotopies}
	If $Z$ is connected and if both $\phi_0$ and $\phi_1$ induce non-zero maps in degree 0 homology, then
	\begin{equation*}
	(J_\Phi \circ \partial + \partial \circ J_\Phi) (z) = \phi_1(z) + \phi_0(z).
	\end{equation*}
\end{lem}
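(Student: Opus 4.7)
The plan is to recognize $J_\Phi$ as the composite $* \circ (\phi_0 \otimes \phi_1) \circ \awd$, since the Alexander-Whitney formula $\awd(z) = \sum_j {}^j\partial z \otimes \partial^{n-j} z$ is exactly the decomposition into front and back faces that appears in the defining sum for $J_\Phi$. With this reformulation, computing $\partial J_\Phi + J_\Phi \partial$ becomes a formal manipulation of three pieces already available: $\awd$ is a chain map, $\phi_0$ and $\phi_1$ are chain maps, and $*$ has the almost-chain-map boundary defect $\partial\circ * + *\circ(\partial\otimes 1 + 1\otimes \partial) = \epsilon\otimes 1 + 1\otimes \epsilon$ recorded just before the lemma.

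Concretely, I would first apply the chain-map properties to move $\partial$ across $\awd$ and $\phi_0\otimes \phi_1$, rewriting $J_\Phi(\partial z)$ as $*\circ(\partial\otimes 1 + 1\otimes\partial)\bigl((\phi_0\otimes\phi_1)(\awd z)\bigr)$. Expanding $\partial J_\Phi(z)$ via the join boundary formula, the internal $(\partial\otimes 1 + 1\otimes\partial)$ pieces cancel, leaving
\begin{equation*}
(\partial J_\Phi + J_\Phi \partial)(z) = \sum_j \epsilon\bigl(\phi_0({}^j\partial z)\bigr)\,\phi_1(\partial^{n-j}z) \,+\, \phi_0({}^j\partial z)\,\epsilon\bigl(\phi_1(\partial^{n-j}z)\bigr).
\end{equation*}
Since each $\phi_i$ preserves degree and $\epsilon$ vanishes on chains of positive degree, the first summand survives only when $j=0$ (making ${}^0\partial z = v_0$, the initial vertex of $z$) and the second only when $j=n$ (making $\partial^0 z = v_n$, the terminal vertex). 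For $n>0$ the entire sum collapses to $\epsilon(\phi_0(v_0))\,\phi_1(z) + \epsilon(\phi_1(v_n))\,\phi_0(z)$.

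To finish, I would invoke connectedness: because $X$ is connected, augmentation induces an isomorphism $H_0(X)\xrightarrow{\sim} \FF_2$, and because $Z$ is connected each of $v_0,v_n$ represents the unique non-zero class of $H_0(Z)$. The hypothesis that $\phi_0,\phi_1$ induce non-zero maps on $H_0$ therefore forces $\epsilon(\phi_0(v_0)) = \epsilon(\phi_1(v_n)) = 1$ in $\FF_2$, giving the desired identity. The edge case $n=0$, where $z$ is a single vertex and the two boundary indices coincide, follows directly by applying the join boundary formula to $\phi_0(v_0)*\phi_1(v_0)$ with both augmentations equal to $1$. The main obstacle is really just bookkeeping: once $J_\Phi$ is written through $\awd$, the argument is forced by the boundary defect of $*$, and the connectedness and $H_0$ hypotheses enter only at the very end to pin down the two surviving augmentation scalars.
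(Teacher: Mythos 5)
Your proposal is correct and follows essentially the same route as the paper's first proof: both factor $J_\Phi$ as $* \circ (\phi_0 \otimes \phi_1) \circ \awd$, use that $\awd$ and $\phi_0 \otimes \phi_1$ are chain maps together with the boundary defect $(\partial * + * \partial)(a \otimes b) = \epsilon(a)\, b + a\, \epsilon(b)$, and then use connectedness and the $H_0$ hypothesis to identify the surviving augmentation scalars $\epsilon\phi_0({}^0\partial z) = \epsilon\phi_1(\partial^0 z) = 1$. The paper additionally records a second proof via $EZ_h$ and the chain map $H_\Phi$, but your argument matches its first proof.
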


\begin{proof}
	We will give two proofs, one exploiting $\awd$ and the other $EZ_h$.
	The assumption that both $\phi_0$ and $\phi_1$ induce non-zero maps in degree 0 homology implies that for any vertex $v$ of $Z$ both $\phi_0(v)$ and $\phi_1(v)$ are sums of an odd number of vertices of $X$, hence $\epsilon \phi_0(v) = \epsilon \phi_1 (v) = 1$.\\
	
	For the first proof we notice that the degree one map $J_\Phi$ is the composition
	\begin{equation*}
	N_*(Z) \xrightarrow{\awd} N_*(Z) \otimes N_*(Z) \xrightarrow {\phi_0 \otimes \phi_1} N_*(X) \otimes N_*(X) \xrightarrow{*} N_*(X).
	\end{equation*}
	
	Since the first two maps are chain maps, the boundary formula for $*$ implies
	\begin{align*}
	(J_\Phi \circ \partial + \partial \circ J_\Phi) (z) & = 
	(\partial * + * \partial)\circ \phi_0 \otimes \phi_1 \circ \awd(z) \\ & =
	\phi_1(z) + \phi_0(z),
	\end{align*}
	where, denoting $z_0 = \, ^0\partial z$ and $z_n = \partial^0 z$, we used for the last equality the fact that $\epsilon \phi_0(z_0) = \epsilon \phi_1(z_n) = 1$.\\
		
	For the second proof we first define a degree zero chain map $$H_\Phi \colon N_*(Z \times \Delta^1) \to N_*(X)$$ that agrees for any chain maps $\phi_0$ and $\phi_1$ with $(\epsilon \phi_0(z_0)) \phi_1(z)$ on the top copy $Z \times 1$ of the cylinder and with $\phi_0(z) (\epsilon \phi_1(z_n))$ on the bottom copy $Z \times 0$.
	Consider an $n$-simplex in $Z \times \Delta^1$, not on the top or bottom, $$(t,u) = (t_0, \ldots, t_\ell, t_{\ell+1}, \ldots, t_n), (0 \ldots, 0, 1, \ldots, 1),$$ where there are $\ell+1$ zeros and $n-\ell$ ones in the $\Delta^1$ factor $u$.
	Set
	\begin{equation*}
	H_\Phi(t,u) = \phi_0 (t_0, \ldots, t_\ell) * \phi_1(t_{\ell+1}, \ldots, t_n) \in N_n(X).
	\end{equation*}
	Using the join boundary formula, it is an exercise to prove that $H_\Phi$ is indeed a chain map.\\
		
	We then observe that the degree one map $J_\Phi$ is the composition
	\begin{equation*}
	N_n(Z) \xrightarrow{EZ_h} N_{n+1}(Z \times \Delta^1) \xrightarrow{H_\Phi} N_{n+1}(X).
	\end{equation*}
	Since $H_\Phi$ is a chain map, with the assumption that $\epsilon \phi_0(z_0) = \epsilon \phi_1(z_n) = 1$ we have again
	\begin{align*}
	(J_\Phi \circ \partial + \partial \circ J_\Phi) (z) & =
	H_\Phi \circ ( EZ_h \circ \partial + \partial \circ EZ_h ) (z) \\ & =
	H_\Phi (z, (1, \ldots, 1)) + (z, (0, \ldots, 0)) \\ & =
	\phi_1(z) + \phi_0(z)
	\end{align*}
	as claimed.
\end{proof}

The existence of a join operation on $N_*(X)$ implies $H_*(X; \FF_2) \simeq H_*(point; \FF_2)$.
We remark that Lemma \ref{l:join for homotopies} can be proven with this much weaker hypothesis, using a different chain homotopy $J_\Phi$.
But the chain homotopy obtained using a join operation leads to the construction of equivariant chain homotopies in the presence of group actions, which will be very important in later sections of the paper.
Another reason the join is important is, as was shown in \cite{medina2020prop1}, that the join together with the Alexander-Whitney diagonal and the augmentation $\epsilon$ define an $E_\infty$ structure on chain complexes of simplicial sets, generalizing the one we describe in Section 4.3 below.

\subsection{Classifying Spaces for Discrete Groups}

Let $G$ be a discrete group.
We review here the MacLane model for a contractible left $G$ simplicial set $EG$ and a classifying simplicial set $BG = G \backslash EG$.

\paragraph{1.2.1} The MacLane model for $EG$ has $(EG)_n$ equal to the set of ordered $(n+1)$-tuples $(g_0, \ldots, g_n)$ of elements of $G$.
The $i^{th}$-face map is given by deleting the entry $g_i$ and the $i^{th}$ degeneracy map is given by repeating $g_i$.
In particular, the degenerate simplices of $EG$ are those sequences that have the same group element as two successive entries.\\

There is a left action $G\times EG\to EG$ given by
\begin{equation*}
g(g_0, \ldots ,g_n)=(gg_0, \ldots, gg_n).
\end{equation*}
This is a free action and the quotient is $BG$, the MacLane model for the \textit{classifying space} for $G$.
We can identify $BG_n$ with $n$-tuples of elements in $G$ by identifying the orbit of $(g_0, \ldots ,g_n)$ with $[g_0^{-1}g_1, \ldots ,g_{n-1}^{-1}g_n]$.
With this representation of elements in $BG_n$ the face maps are given by
\begin{align*}
\partial_0 [h_1, \ldots, h_n] & = [h_2, \ldots, h_n], \\
\partial_i [h_1, \ldots, h_n] & = [h_1, \ldots, h_{i-1}, h_ih_{i+1}, \ldots,h_n], \\
\partial_n [h_1, \ldots, h_n] & = [h_1, \ldots, h_{n-1}]. 
\end{align*}
The degenerate simplices in $BG$ are those sequences in which at least one entry is the identity element.

\paragraph{1.2.2} This definition of $EG$ is a special case of the classifying space of a category, the category which has the elements of group $G$ as objects and for every pair $g,g'\in G$ a unique morphism between them.
A simple proof that $EG$ is contractible is given by the observation that every element $g \in G$ is a terminal object of the category underlying $EG$.\\ 

$BG$ is the classifying space of the quotient category of $EG$, the quotient having a single object $*$ with ${\rm Hom}(*,*) = G$ and with the composition operation being the product in $G$.
One can think of the map $EG \to BG$ as the map of classifying spaces associated to the functor between the underlying categories that takes the morphism $(g_0, g_1)$ to the morphism $g_0^{-1}g_1$.
This assignment does preserve compositions since $g_0^{-1}g_2 = (g_0^{-1}g_1)(g_1^{-1}g_2)$.

\subsection{Equivariant Maps at the $EG$ Level}

\paragraph{1.3.1} We will next prove a rather general fact stating that certain pairs of equivariant chain maps are equivariantly chain homotopic.
Actually, results like this are well-known, using acyclic model methods.
But we want explicit equivariant chain homotopies.
Our result will be crucial for the ultimate goal of producing coboundary formulae for Adem relations.

\begin{thm} \label{t:homotopy for classifying spaces}
	Consider a group homomorphism $\iota \colon H \to G$ between finite groups, inducing a simplicial map $\iota \colon EH \to EG$, and a chain map $\iota_* \colon N_*(EH) \to N_*(EG)$.
	Note that $\iota_*$ is $\iota$-equivariant for the free actions of $H$ on the domain and of $\iota H \subset G$ on the range.
	Suppose $\Psi \colon N_*(EH) \to N_*(EG)$ is any $\iota$-equivariant chain map that induces the identity on
	\begin{equation*}
	H_0(EH) = \FF_2 = H_0(EG).
	\end{equation*}
	Then $\iota_*$ and $\Psi$ are chain homotopic, by an $\iota$-equivariant chain homotopy $J_\Psi \colon N_*(EH) \to N_{*+1}(EG)$.
\end{thm}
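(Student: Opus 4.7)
The plan is to invoke Lemma~\ref{l:join for homotopies} directly, taking $\phi_0 = \iota_*$ and $\phi_1 = \Psi$, and to verify that the resulting chain homotopy is automatically $\iota$-equivariant. All of the infrastructure needed is already in place: $EG$ admits a canonical degree-one join $(g_0,\dots,g_k) * (g_0',\dots,g_m') = (g_0,\dots,g_k,g_0',\dots,g_m')$, satisfying the boundary formula used in \S1.1.6, and $EH$ is connected since it is contractible, so both $\iota_*$ and $\Psi$ induce the identity (hence a non-zero map) on $H_0 = \FF_2$.

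First I would define
\begin{equation*}
J_\Psi(z) \ = \ \sum_j \iota_*({}^j\partial z) * \Psi(\partial^{n-j} z)
\end{equation*}
for a simplex $z \in (EH)_n$, extended linearly. Lemma~\ref{l:join for homotopies} then gives $J_\Psi \circ \partial + \partial \circ J_\Psi = \iota_* + \Psi$ directly, so nothing more is needed on the chain-homotopy identity itself.

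The key point to check is $\iota$-equivariance. For $h \in H$, face operators on $EH$ commute with the left $H$-action, so ${}^j\partial(h \cdot z) = h \cdot {}^j\partial z$ and $\partial^{n-j}(h \cdot z) = h \cdot \partial^{n-j} z$. Because $\iota_*$ and $\Psi$ are $\iota$-equivariant, each factor becomes $\iota(h)$ applied to the original factor in $N_*(EG)$. Finally, the join on $N_*(EG)$ is $G$-equivariant, since $\iota(h) \cdot \big((g_0,\dots,g_k) * (g_0',\dots,g_m')\big) = (\iota(h) g_0,\dots,\iota(h) g_m')$ equals $\big(\iota(h) \cdot (g_0,\dots,g_k)\big) * \big(\iota(h) \cdot (g_0',\dots,g_m')\big)$. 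Combining these three facts yields $J_\Psi(h \cdot z) = \iota(h) \cdot J_\Psi(z)$.

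There is no real obstacle here; the theorem is essentially a packaging of Lemma~\ref{l:join for homotopies} together with the observation that all three ingredients used to build $J_\Psi$ — the front/back face decomposition of simplices, the chain maps $\iota_*$ and $\Psi$, and the join operation on $EG$ — respect group actions in the appropriate sense. The substantive content of the theorem, as the subsequent sections will exploit, is the \emph{explicit} formula for $J_\Psi$ in terms of $\Psi$ and the join, which is what makes the resulting cochain-level Adem formulae computable; once that formula is written down, verification is a brief bookkeeping check.
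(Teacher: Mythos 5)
Your proposal is correct, but it proves the theorem by a genuinely different route than the paper's own proof of this statement. The paper's proof here is a soft homological-algebra argument: it regards $\Psi$ as a zero-cycle in $\mathrm{Hom}_{\FF_2[H]}(N_*(EH), N_*(EG))$, and uses freeness of $N_*(EH)$ over $\FF_2[H]$ together with acyclicity of $N_*(EG)$ to compute $H_0$ of that complex as $\FF_2$, so that any two equivariant chain maps inducing the identity on $H_0$ lie in the single nontrivial equivariant homotopy class; no homotopy is exhibited. What you do instead — write down $J_\Psi(z) = \sum_j \iota_*({}^j\partial z) * \Psi(\partial^{n-j}z)$, invoke Lemma~\ref{l:join for homotopies} with $\phi_0 = \iota_*$, $\phi_1 = \Psi$ (both induce nonzero maps on $H_0$, and the concatenation join on $N_*(EG)$ satisfies the required boundary formula), and check $\iota$-equivariance from equivariance of front/back faces, of $\iota_*$ and $\Psi$, and of the join under the diagonal $G$-action — is precisely the content and proof of the paper's follow-up Theorem~\ref{t:homotopy for iota and Psi}, which supplies the canonical explicit homotopy. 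So your argument is constructive where the paper's is existential: it buys the explicit formula immediately (which is what the later applications to $J_\Psi$ and $J_{(23)}$ actually use), at the cost of not recording the slightly stronger conceptual fact the paper's argument yields, namely that the equivariant Hom complex has only one nontrivial degree-zero homotopy class, so \emph{any} two such equivariant maps are equivariantly homotopic independently of the join construction. Your equivariance check is complete and the appeal to Lemma~\ref{l:join for homotopies} is legitimate, since $EH$ is connected and both maps are nonzero on $H_0$; no gap.
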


\begin{proof}
	First we indicate a reason this should be true somewhat different from the usual acyclic model argument.
	The equivariant chain map $\Psi$ can be regarded as a zero-cycle in the chain complex
	$$\mathrm{Hom}_{\FF_2[H]}(N_*(EH), N_*(EG)).$$
	Since $N_*(EH)$ is free over the group ring $\FF_2[H]$ and $EG$ is contractible,
	$$H_0(\mathrm{Hom}_{\FF_2[H]}(N_*(EH), N_*(EG))) = 
	H_0(\mathrm{Hom}(N_*(BH), \FF_2)) = \FF_2.$$
	Thus, $ \mathrm{Hom}(N_*(EH), N_*(EG))$ contains only one non-trivial equivariant homotopy class.
\end{proof}

\paragraph{1.3.2} The explicit equivariant chain homotopy between $\Psi$ and $\iota_*$ that we write down in the next theorem is a special case of the chain homotopies constructed in Lemma~\ref{l:join for homotopies} of \S1.1.6.
Note the join map of simplices extends to a multilinear map $N_j(EG) \otimes N_k(EG) \xrightarrow{*} N_{j+k+1}(EG)$ satisfying $\partial(x*y) = \partial x *y + x * \partial y + \epsilon(x) \, y + x \, \epsilon(y)$.
 
\begin{thm} \label{t:homotopy for iota and Psi}
	With $\iota$ and $\Psi$ as in Theorem~\ref{t:homotopy for classifying spaces}, a canonical equivariant chain homotopy between $\iota_*$ and $\Psi$ is given by the formula
	\begin{equation*}
	J_\Psi(h_0, h_1, \ldots, h_n) = \sum_j \iota_* ( h_0, h_1, \ldots, h_j) * \Psi(h_j, h_{j+1}, \ldots, h_n).
	\end{equation*}
\end{thm}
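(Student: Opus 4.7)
The plan is to recognize the formula for $J_\Psi$ as a direct instance of the general join-based chain homotopy constructed in Lemma~\ref{l:join for homotopies}, applied with $Z = EH$, $X = EG$, $\phi_0 = \iota_*$, and $\phi_1 = \Psi$. Indeed, for a basic simplex $z = (h_0, h_1, \ldots, h_n) \in N_n(EH)$, the front faces are $^j\partial z = (h_0, \ldots, h_j)$ and the back faces are $\partial^{n-j} z = (h_j, \ldots, h_n)$, so the formula
\begin{equation*}
J_\Psi(z) = \sum_j \iota_*({}^j\partial z) * \Psi(\partial^{n-j} z)
\end{equation*}
matches exactly the chain homotopy $J_\Phi$ produced by the lemma.

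First I would check the hypotheses of Lemma~\ref{l:join for homotopies}. The MacLane model $EG$ admits a join operation $* \colon N_j(EG) \otimes N_k(EG) \to N_{j+k+1}(EG)$ given by concatenation of tuples, $(g_0, \ldots, g_j) * (g_0', \ldots, g_k') = (g_0, \ldots, g_j, g_0', \ldots, g_k')$, and one verifies directly that this satisfies the boundary identity $\partial(a*b) = \partial a * b + a * \partial b + \epsilon(a)\, b + a\, \epsilon(b)$ in exactly the same way as for a simplex. The simplicial set $EH$ is connected (any two vertices $h, h'$ are joined by the edge $(h, h')$), and the hypothesis that $\Psi$ induces the identity on $H_0$, together with the obvious fact that $\iota_*$ does the same, ensures $\epsilon \iota_*(v) = \epsilon \Psi(v) = 1$ for every vertex $v$. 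Thus Lemma~\ref{l:join for homotopies} yields
\begin{equation*}
(J_\Psi \circ \partial + \partial \circ J_\Psi)(z) = \Psi(z) + \iota_*(z),
\end{equation*}
so $J_\Psi$ is a chain homotopy between $\iota_*$ and $\Psi$.

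The remaining point, which is the only thing genuinely new beyond quoting the lemma, is $\iota$-equivariance of $J_\Psi$. For this I would observe that the join operation on $N_*(EG)$ is $G$-equivariant: if $g \in G$ acts on $EG$ by left multiplication on each vertex, then clearly $g \cdot (a * b) = (g \cdot a) * (g \cdot b)$ from the concatenation formula. Given $h \in H$, applying $h$ to $z = (h_0, \ldots, h_n)$ and using $\iota$-equivariance of both $\iota_*$ and $\Psi$ separately gives
\begin{equation*}
J_\Psi(h \cdot z) = \sum_j \iota(h) \cdot \iota_*(h_0, \ldots, h_j) * \iota(h) \cdot \Psi(h_j, \ldots, h_n) = \iota(h) \cdot J_\Psi(z),
\end{equation*}
as required.

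There is no real obstacle here; the content of the theorem is already packaged inside Lemma~\ref{l:join for homotopies}. The only thing to be careful about is making sure the join operation on $N_*(EG)$ is literally the concatenation-of-tuples map so that $G$-equivariance (and hence $\iota$-equivariance) is manifest; once that is noted, both the chain homotopy identity and the equivariance follow immediately from the lemma and from the equivariance hypotheses on $\iota_*$ and $\Psi$.
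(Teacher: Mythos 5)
Your proposal is correct and follows essentially the same route as the paper: the chain homotopy identity is obtained by specializing Lemma~\ref{l:join for homotopies} with $\phi_0 = \iota_*$, $\phi_1 = \Psi$, using that $^j\partial(h_0,\dots,h_n) = (h_0,\dots,h_j)$ and $\partial^{n-j}(h_0,\dots,h_n) = (h_j,\dots,h_n)$, while $\iota$-equivariance is read off directly from the concatenation formula for the join. Your explicit check that the join on $N_*(EG)$ is $G$-equivariant and that $\epsilon\iota_*(v) = \epsilon\Psi(v) = 1$ simply spells out what the paper treats as immediate.
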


\begin{proof}
	The $\iota$ equivariance of $J_\Psi$ is obvious from the formula defining it.
	That $J_\Psi$ defines a chain homotopy is a direct consequence of Lemma~\ref{l:join for homotopies} after noticing that $^j \partial (h_0, \dots, h_n) = (h_0, \dots, h_j)$ and $\partial^j (h_0, \dots, h_n) = (h_j, \dots, h_n)$.
	\footnote{We discussed aspects of this proof of Theorem~\ref{t:homotopy for iota and Psi}, perhaps prematurely, in \S1.1.6, because we wanted to pave the way for Theorems~\ref{t:homotopy for classifying spaces} and \ref{t:homotopy for iota and Psi}. One can also prove Theorem~\ref{t:homotopy for iota and Psi} by a lengthy but straightforward direct computation.}
\end{proof}

\paragraph{1.3.3} A special case of Theorem~\ref{t:homotopy for classifying spaces} is the map on chains induced by the right translation map of simplicial sets $EG \to EG$ given by $x \mapsto xg^{-1}$, with $\iota = \Id \colon G \to G$.
In this case, one actually obtains an equivariant homotopy $J_g \colon EG \times \Delta^1 \to EG$ between the identity and the map $(g_0, \ldots, g_n) \mapsto (g_0g^{-1}, \ldots, g_ng^{-1}) = g^{-1}(c_g(g_0), \ldots, c_g(g_n))$, where $c_g$ is conjugation by $g$.
Applying equivariance, these constructions project to a homotopy $\bar{J}_g\colon BG \times \Delta^1 \to BG$ between the identity and the map induced by inner automorphism $c_g$.
The chain homotopy $N_*(EG) \to N_{*+1}(EG)$ produced by Theorem~\ref{t:homotopy for iota and Psi} coincides with the chain homotopy produced by the space level homotopy.
The formula is
\begin{equation*}
J_g(g_0, \ldots, g_{n}) = \sum_j (g_0, \ldots, g_{j}, g_{j}g^{-1}, \ldots, g_{n}g^{-1}).
\end{equation*}
 
\subsection{The Spaces $E\g1_2$ and $B\g1_2$}

\paragraph{1.4.1} Of particular importance for us is the group $G = \g1_2 = \{1, T\}$.
In $E\g1_2$ there are just two non-degenerate simplices in each dimension, which are $\tilde{x}_p = (T^0, T^1, \ldots, T^p) = (1, T, 1, \ldots, T^p)$ and $T\tx_p = (T, 1, T, \ldots, TT^p)$.
The (equivariant) boundary in the normalized chain complex is determined by $\partial \tx_p = T\tx_{p-1} + \tx_{p-1}$, since all codimension one faces except the first and last are degenerate.
In the classifying space $B\g1_2$ there is a single non-degenerate simplex in each dimension, namely $x_p = [ T, T, \ldots , T]$, which is a cycle.
For normalized cochains, we have the dual basis elements $t_p = x_p^* \in N^{-p}(B\g1_2)$ and $\tilde{t}_p = \tx_p^*,\ T\tilde{t}_p = T\tx_p^* \in N^{-p}(E\g1_2)$.

\paragraph{1.4.2} \textbf{REMARK.} We make an observation about the simplices of $E\g1_2$ and $B\g1_2$ that will be quite important later.
This observation extends the above observation about the chain boundary formula.
In $E\g1_2$, if we delete an interior interval, consisting of an odd number of adjacent vertices of a non-degenerate simplex, the result is a degenerate simplex.
If we delete an interior interval consisting of an even number of vertices, or \textit{any} initial or terminal interval of vertices, the result remains a non-degenerate simplex.
\\

It is slightly trickier understanding compositions of face operations in $B\g1_2$.
Of course one can always just apply the observations in the paragraph above about degenerate and non-degenerate simplices in $E\Sigma_2$ to the projection $E\g1_2 \to B\g1_2$.
But one can also reason directly in $B\g1_2$.
From the general face operator formulae for $BG$ in \S1.2.1, the first or last basic face operator $\partial_0$ or $\partial_p$ in $B\g1_2$ applied to $x_p = [T, T, \ldots, T]$ just deletes a first or last $T$, leaving the non-degenerate $x_{p-1}$.
But an interior basic face operator multiplies two adjacent $T$'s, resulting in a $1$ entry and a degenerate simplex.
If now another adjacent interior face operator is applied, the 1 and a $T$ are multiplied, eliminating the 1 and resulting in the non-degenerate $x_{p-2}$.
Thus a composition of adjacent interior face operators applied to $x_p$ results in a non-degenerate simplex if and only if the number of adjacent interior face operators composed is even.

\section{Cup$_n$ Products}
In the first two subsections of this section we explain Steenrod's explicit enhanced Alexander-Whitney chain map $\tawd \colon N_*(E\g1_2) \otimes N_*(X) \to N_*(X) \otimes N_*(X)$ that is used to define $\smallsmile_n$ products.
Then in Subsection 2.3 we compute the $\smallsmile_n$ products in $N^*(B\Sigma_2)$ and $N^*(E\Sigma_2)$, using some simple combinatorial facts about counting certain kinds of partitions of integers.\footnote{The proofs of the combinatorial facts are deferred to an appendix.} In particular, we prove Theorem~\ref{binom1} of the Introduction.

\subsection{Alexander-Whitney and Steenrod Maps}

\paragraph{2.1.1} We will make heavy use of the classical Alexander-Whitney map
\begin{equation*}
AW \colon N_*(X \times Y) \to N_*(X) \otimes N_*(Y).
\end{equation*}
Simplices of dimension $n$ in a product simplicial set $X \times Y$ are given by pairs $ (X \times Y)_n = X_n \times Y_n$.
The map $AW$ is defined universally on a basic product of simplices $\Delta^n \times \Delta^n$ by
$$AW \big( (0, 1, \ldots, n), (0, 1, \ldots, n) \big) = \sum_{i=0}^n (0, 1, \ldots, i) \otimes (i, i+1, \ldots, n).$$
On a general pair of $n$-simplices $(u,v)$, this yields by naturality the usual sum of front faces of $u$ tensor back faces of $v$.\\

We will call by the name $\awd \colon N_*(X) \to N_*(X) \otimes N_*(X)$ the map which is a chain approximation of the diagonal given by the composition $AW \circ \Delta_*$, where $\Delta_*$ is the chain map associated to the diagonal map of simplicial sets $\Delta \colon X \to X \times X$, $\Delta u = (u, u)$.
The cochain dual of $\awd$ defines the cochain cup product $N^*(X) \otimes N^*(X) \to N^*(X)$.\\

The map $AW$ for products of spaces is associative, hence unambiguously defied for any number of factors.
It is also natural in any number of factors.
The diagonal approximation map $\awd$ for single spaces is natural and (co)associative.

\paragraph{2.1.2} For any space $X$ there is an immensely important enhanced $AW$ diagonal approximation chain map $\tawd \colon N_*(E\g1_2) \otimes N_*(X) \to N_*(X) \otimes N_*(X)$, which is a $\g1_2$-equivariant chain map of degree 0.
Here $T \in \g1_2$ acts on $N_*(E\g1_2)$ in the obvious way, fixes the copy of $N_*(X)$ in the domain, and switches the two copies of $N_*(X) $ in the range.
The map $\tawd$ was constructed by Steenrod using higher homotopies between the diagonal approximations $\awd$ and $T \awd$, \cite{23steenrodproducts}.
The map $(\tawd)_0 \colon \tx_0 \otimes N_*(X) \to N_*(X) \otimes N_*(X)$ is a chain map, (since $\tx_0$ is a cycle), and identifies with $\awd$.
The enhanced diagonal $\tawd$ is natural for maps $X \to Y$.
A precise construction of $\tawd$ is given in Subsection 2.2 below.

\paragraph{2.1.3} For $n > 0$, the cochain dual of the map $(\tawd)_n \colon \tx_n \otimes N_*(X) \to N_*(X) \otimes N_*(X)$ encodes the higher $\smallsmile_n$ product.
To be precise, given cochains $\alpha, \beta$ of degrees $-p, -q$ and a simplex $u$ of dimension $p+q -n$, one has
\begin{equation*}
\langle \alpha \smallsmile_n \beta, \, u) \rangle =
\langle \alpha \otimes \beta, \ \tawd(\tx_n \otimes u) \rangle.
\end{equation*}
Since $\tawd$ is a chain map, it is a cycle of degree 0 in the complex
\begin{equation*}
\mathrm{Hom}(N_*(E\g1_2) \otimes N_*(X),\ N_*(X) \otimes N_*(X)).
\end{equation*}
This means $0 = \tawd \circ \partial + \partial \circ \tawd$.
Pursuing this, one obtains the coboundary formula for the $\smallsmile_n$ operations,
\begin{equation*}
d (\alpha \smallsmile_n \beta) = d\alpha \smallsmile_n \beta + \alpha \smallsmile_n d \beta + \alpha \smallsmile_{n-1} \beta + \beta \smallsmile_{n-1} \alpha.
\end{equation*}
We can ignore signs since we have $\FF_2$ coefficients.

\paragraph{2.1.4} From the coboundary formula, if $\alpha$ and $\beta$ are cocycles then $$d(\alpha \smallsmile_n \beta) = \alpha \smallsmile_{n-1} \beta + \beta \smallsmile_{n-1} \alpha.$$  If $\alpha$ is a cocycle of degree $-i$ then for $0 \leq n \leq i$ the Steenrod Square $Sq^n \alpha = \alpha \smallsmile_{i -n} \alpha$ is a cocycle of degree $-(i + n)$.
Then $Sq^i(\alpha) = \alpha^2$.
If $\beta$ is another cocycle of degree $-i$ then $$Sq^n(\alpha + \beta) = (\alpha + \beta) \smallsmile_{i-n} (\alpha + \beta) = Sq^n(\alpha) + Sq^n(\beta) + d (\alpha \smallsmile_{i - n + 1} \beta),$$ so the $Sq^n$ are linear operations on cohomology classes.
It is easy to see from the direct combinatorial construction of $\tawd$ given in the next section that $Sq^0(\alpha) = \alpha \smallsmile_i \alpha = \alpha$.\\

Another property of Steenrod Squares that follows easily from Steenrod's direct definition is the commutativity of Squares with cohomology suspension.
The proof of this result seems almost awkward when expressed in terms of cohomology, \cite{13lurie}, \cite{21moshertangora}.
In the papers \cite{6brumfielmorgan} and \cite{7brumfielmorgan} we discovered and exploited the following cochain level formula for the integral version of $\smallsmile_n$ products:
\begin{equation*}
s(x \smallsmile_i y) = (-1)^{deg(x)+i+1}sx \smallsmile_{i+1} sy,
\end{equation*}
where $s$ is cochain suspension.
This obviously implies quite a bit more than just the fact that Steenrod Squares commute with cohomology suspension.\footnote{We believe this to be an unnoticed or under-appreciated formula.}

\paragraph{2.1.5} It is easy to compute cup products in $H^*(B\g1_2)$ and $H^*(E\g1_2)$.
For any $EG$, we have the $AW$ maps, which are $G$-equivariant,
\begin{align*}
AW( (g_0, \ldots, g_n) \times (h_0, \ldots , h_n) ) & = \sum_{i = 0} ^ n (g_o, \ldots, g_i) \otimes (h_i, \ldots, h_n), \\
\awd(g_0, \ldots, g_n) & = \sum_{i = 0}^n (g_0, \ldots, g_i) \otimes (g_i, \ldots, g_n).
\end{align*}

Reducing mod the $G$ action gives the $AW$ maps for $BG$.\\

In the case $G = \g1_2$ one can reason directly with the cells $[T, T, \ldots, T]$ of $B\g1_2$.
The result is easily seen to be $\awd(x_p) = \sum_{i+j = p} \, x_i \otimes x_j$.
Applying these formulae to dual cochains in $B\g1_2$, we get the cup product formula
$$t_i \smallsmile t_j = t_{i+j} \in N^*(B\g1_2).$$
Thus the ring $N^*(B\g1_2) = \FF_2[t]$, a polynomial ring on one generator $t = t_1$ of degree $-1$.
This ring is also the cohomology ring $H^*(B\g1_2)$, since the differential is 0.\\

The $AW$ formulae also reveal the cup products in $N^*(E\g1_2)$.
Explicitly,
\begin{align*}
\tilde{t}_p \smallsmile \tilde{t}_q & = \tilde{t}_{p+q} \text{ and } \tilde{t}_p \smallsmile T \tilde{t}_q = 0 \text{ if } p \text{ is even}, \\
\tilde{t}_p \smallsmile \tilde{t}_q & = 0 \text{ and } \tilde{t}_p \smallsmile T\tilde{t}_q = \tilde{t}_{p+q} \text{ if } p \text{ is odd}.
\end{align*}

The other products in $N^*(E\g1_2)$ are determined by $T$-equivariance.

\subsection{Explicit Definition of $\tawd$}

\paragraph{2.2.1} We now recall Steenrod's explicit cochain formulae for $\smallsmile_n$ products, \cite{23steenrodproducts}.
In fact, we will define Steenrod's map $\tawd \colon N_*(E\g1_2) \otimes N_*(X) \to N_*(X) \otimes N_*(X)$.
A simplex $u \in X_N$ can be viewed as a simplicial map $\Delta^N \to X$, so by naturality it suffices to work on a simplex, $\Delta^N = (0, 1, \ldots, N)$.
Then $\tawd(\tx_n \otimes \Delta^N) \in N_*(\Delta^N) \otimes N_*(\Delta^N)$ is a sum of tensor products of various faces of $\Delta^N$.
Subsets $I \subset (0, 1, \ldots , N)$ name the faces of $\Delta^N$.
The sum we want is indexed by a set, ${\rm Diagrams }(N)$, of diagrams consisting of two rows of non-empty intervals of the vertices of $\Delta^N$.
The total number of intervals is $n+2$, thus $I_1, I_2, \ldots, I_{n+2}$.
Every vertex of $\Delta^N$ is in at least one interval.
For each $1 \leq j \leq n+1$, the final vertex of $I_j$ is the initial vertex of $I_{j+1}$, and this describes the only overlaps of the intervals.
We require each interior interval $I_2, I_3, \ldots I_{n+1}$ to contain more than one vertex.\\

We alternate these intervals, with $I_1, I_3, I_5, \ldots$ on the first row and $I_2, I_4, \ldots$ on the second row.
Visualize the intervals by inserting $n+1$ separating bars between vertices of $\Delta^N$, then repeating the vertex after each bar.
Thus $$(I_1 | I_2 | \ldots | I_{n+2}) = ((0, \ldots, k_1) | (k_1, \ldots, k_2 ) | \ldots | (k_{n+1}, \ldots, N)).$$ 

\paragraph{2.2.2} Here is Steenrod's formula.
$$\tawd(\tilde x_n \otimes \Delta^N)\ =\!\! \sum_{{\rm Diagrams }(N)} (I_{odd} \otimes I_{even}), \text{ where } I_{odd} = \bigsqcup_j I_{2j-1} \text{ and } I_{even} = \bigsqcup_j I_{2j}.$$
In terms of a simplex $u \colon \Delta^N \to X$, with faces denoted $u(I)$, the formula is $$\tawd (\tilde x_n \otimes u) \ =\!\! \sum_{{\rm Diagrams }(N)} u(I_{odd}) \otimes u(I_{even}) \in N_*(X) \otimes N_*(X).$$

We extend $\tawd$ equivariantly, defining $\tawd(T \tilde x_n \otimes u)$ by switching the order of the tensor product factors in $ \tawd (\tilde x_n \otimes u)$.
Given the intervals $I_j$, we could also form a diagram by putting the $I_1, I_3, \ldots$ on the second row and the $I_2, I_4, \ldots$ on the first row.
So the equivariance amounts to a sum over diagrams vs a sum over inverted diagrams.
Of course, it is by no means obvious that $\tawd$ is a chain map.
But Steenrod proved that it is.\\

Now, given cochains $\alpha, \beta \in N^*(X)$ of degrees $-i, -j$ with $i+j-n = N$, and a simplex $u$ of dimension $N$, Steenrod's definition of the $\smallsmile_n$ product becomes, by duality, the following coface formula:
\begin{equation*}
\langle \alpha \smallsmile_n \beta, u \rangle \ = \!\!
\sum_{{\rm Diagrams }(N)} \langle \alpha, u(I_{odd})\rangle \langle \beta, u(I_{even})\rangle.
\end{equation*}

\paragraph{2.2.3} \textbf{REMARK.} For $1 \leq k \leq n+2$, define $|I_k|$ to be the number of vertices in $I_k$.
For $2 \leq k \leq n+1$ we have $|I_k| \geq 2$.
Possibly $|I_1| = 1$ and/or $|I_{n+2}| = 1$.
Of course a diagram contributes 0 for a pair of cochains of degrees $-i, -j$ unless $i+1 = |I_1| + |I_3| + \cdots$ and $j+1 = |I_2| + |I_4| +\cdots$.
We also notice that if $n = 2m$ is even, then the intervals are $I_1, I_3, \ldots, I_{2m+1}$ and $I_2, I_4, \ldots, I_{2m+2}$.
So there are $m+1$ intervals on the first row and $m+1$  intervals on the second row.
If $n = 2m+1$ is odd then the intervals are $I_1, I_3, \ldots, I_{2m+3}$ and $I_2, I_4, \ldots, I_{2m+2}$.
So there are $m+2$ intervals on the first row and $m+1$ intervals on the second row.

\subsection{Cup$_n$ Products in $E\g1_2$ and $B\g1_2$}

\paragraph{2.3.1} We will now embark on a calculation of {\it all} $\smallsmile_n$ products in $N^*(B\g1_2)$ and $N^*(E\g1_2)$.
The computations will make use of some standard combinatorial formulae for counting ordered partitions of positive integers.\\

We fix $i,j,n$ and $N = i+j-n$.
The only non-zero cochains and chains are the dual pairs $t^i = x_i^*$ and $ t^j = x_j^*$.
We distinguish the cases $n = 2m$ even and $n = 2m+1$ odd.
By Remarks 1.4.2 and 2.2.3, in the even case $n = 2m$ the only diagrams that give non-zero evaluations on $t^i \otimes t^j$ are the diagrams for which $i + 1 = |I_1| + |I_3| + \cdots + |I_{2m+1}|$ is a positive partition, with all terms other than $|I_1|$ even, and also for which $j+1 = |I_2| + \cdots + |I_{2m+2}|$ is a positive partition with all terms other than $|I_{2m+2}|$ even.\\

The point here is that on the second row, where $t^j$ will be evaluated, the number of vertices deleted between intervals $I_{2k}$ and $I_{2k+2}$, $k \geq 1$, is $|I_{2k+1}| - 2$.
Thus by Remark 1.4.2, the $j$-face of $x_N$ named $x_N(I_{even})$ is degenerate unless all $|I_{2k+1}|$ are even.
Similarly, on the first row, the $i$-face of $x_N$ named $x_N(I_{odd})$ is degenerate unless all $|I_{2k}|$, $k \leq m$, are even.\\

In the same way by Remarks 1.4.2 and 2.2.3, in the odd case $n = 2m+1$ the only diagrams that give non-zero evaluations are the diagrams for which $i + 1 = |I_1| + |I_3| + \cdots + |I_{2m+3}|$ is a positive partition for which all but the first and last terms are even, and also for which $j+1 = |I_2| + \cdots + |I_{2m+2}|$ is a positive partition with all terms even.

\paragraph{2.3.2} We prove the following facts in an appendix.\\

\textbf{COMBINATORIAL FACT 1.} Let $n = 2m+1$.
The number mod 2 of positive ordered partitions of $i+1$ consisting of $m+2$ terms with all but the first and last even, is the binomial coefficient $\binom {i} {n}$ mod 2.
The number mod 2 of positive ordered partitions of $j+1$ consisting of $m+1$ even terms is the binomial coefficient $\binom {j} {n}$ mod 2.\\

\textbf{COMBINATORIAL FACT 2.} Let $n = 2m$.
The number mod 2 of positive ordered partitions of $i+1$ consisting of $m+1$ summands, all but the first even, is the binomial coefficient $\binom {i} {n}$ mod 2.
Thus also the number mod 2 of positive ordered partitions of $j+1$ consisting of $m+1$ summands, all but the last even, is the binomial coefficient $\binom{j} {n}$ mod 2.

\paragraph{2.3.3} The key consequence of the combinatorial facts and the discussion preceding those statements is the following, which repeats Theorem~\ref{binom1} of the introduction.

\begin{thm} \label{t:binomial repeated}
	For $\tawd \colon N_*(E\g1_2) \otimes N_*(B\g1_2) \to N_*(B\g1_2) \otimes N_*(B\g1_2)$, we have the chain formula $$\tawd (\tx_n \otimes x_k) \, = \!\! \sum_{i+j = k+n} \binom{i}{n} \binom{j}{n} x_i \otimes x_j.$$  The equivalent cochain formula is $$ t^i \smallsmile_n t^j = \binom{i}{n} \binom {j} {n} t^{i+j- n}.$$
\end{thm}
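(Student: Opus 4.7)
My plan is to apply Steenrod's explicit diagram formula from \S2.2.2 to the input $\tx_n \otimes x_k$, reduce the sum to those diagrams whose two face factors are both non-degenerate in $N_*(B\g1_2)$, and then invoke Combinatorial Facts 1 and 2 to identify the resulting partition counts with $\binom{i}{n}\binom{j}{n}$.

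By naturality I view $x_k$ as the image of the top simplex under a map $\Delta^k \to B\g1_2$, so
$$\tawd(\tx_n \otimes x_k) = \sum_{{\rm Diagrams}(k)} x_k(I_{odd}) \otimes x_k(I_{even}),$$
summed over tuples $(I_1 | I_2 | \cdots | I_{n+2})$ of intervals in $(0, 1, \ldots, k)$ with interior intervals of length at least $2$. Each face is obtained from $x_k = [T, T, \ldots, T]$ by deleting the vertices not in the relevant union; Remark 1.4.2 says such a face is non-degenerate precisely when every \emph{interior} block of deleted vertices has even size, while terminal blocks at the front or back may be of any size.

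Next I would split by parity of $n$. When $n = 2m$, the last interval $I_{2m+2}$ sits on the second row, so inside $I_{odd}$ the deleted blocks of sizes $|I_2| - 2, |I_4| - 2, \ldots, |I_{2m}| - 2$ are interior while the final $|I_{2m+2}| - 1$ block is terminal; non-degeneracy of $x_k(I_{odd})$ therefore forces $|I_2|, |I_4|, \ldots, |I_{2m}|$ to be even, and symmetrically non-degeneracy of $x_k(I_{even})$ forces $|I_3|, |I_5|, \ldots, |I_{2m+1}|$ to be even. When $n = 2m+1$, the last interval $I_{2m+3}$ sits on the first row, so $x_k(I_{odd})$ being non-degenerate requires \emph{all} of $|I_2|, |I_4|, \ldots, |I_{2m+2}|$ to be even, while $x_k(I_{even})$ requires $|I_3|, |I_5|, \ldots, |I_{2m+1}|$ to be even. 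In each case a surviving diagram contributes $x_i \otimes x_j$ with $i+1 = |I_1| + |I_3| + \cdots$ and $j+1 = |I_2| + |I_4| + \cdots$.

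The coefficient of $x_i \otimes x_j$ is thus the product of two explicit ordered partition counts, for $i+1$ on the first row and $j+1$ on the second, whose even/odd length constraints match precisely the hypotheses of Combinatorial Fact 2 (when $n$ is even) and Combinatorial Fact 1 (when $n$ is odd), delivering the value $\binom{i}{n}\binom{j}{n}$ mod $2$. The cochain statement then follows by duality from the chain statement. The only real obstacle is the bookkeeping in the case split, since whether the terminal interval sits on the first or second row determines which length constraint is relaxed; once that parity-sensitive accounting is correctly tracked, Remark 1.4.2 and the two combinatorial facts assemble the result without further effort.
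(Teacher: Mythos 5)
Your proposal is correct and follows essentially the same route as the paper's own argument: expand $\tawd(\tx_n\otimes x_k)$ via Steenrod's diagram formula, use Remark 1.4.2 to see that a face of $x_k$ survives exactly when the interior deleted blocks have even size (with the parity-of-$n$ case split governing which interval lengths are constrained), and count the resulting row-wise ordered partitions of $i+1$ and $j+1$ by Combinatorial Facts 1 and 2, the cochain form following by duality. The paper phrases the count dually, as diagrams evaluating nontrivially on $t^i\otimes t^j$, but this is only a cosmetic difference from your chain-level bookkeeping.
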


\paragraph{2.3.4} \textbf{REMARK.} The Steenrod Squares are defined for cocycles $\alpha$ of degree $-i$ by $Sq^m(\alpha) = \alpha \smallsmile_{i-m} \alpha$.
Thus we have computed the Steenrod Squares in $N^*(B\g1_2)$ by a direct combinatorial method.
The result is $Sq^m(t^i) = \binom{i} {m} t^{i+m},$ using that $\binom{i}{m} = \binom{i} {i-m}$ and $\binom{i}{m}^2 \equiv \binom{i}{m}$ mod 2.
The usual proof of the formula for Steenrod Squares in real projective space uses the Cartan formula.
We have avoided the Cartan formula, and, moreover we have computed all $\smallsmile_n$ products in a model of real projective space.

\paragraph{2.3.5} We can also calculate the $\smallsmile_n$ products in $N^*(E\g1_2)$.
The method is the same, based on Remarks 1.4.2 and 2.2.3, and the combinatorics of counting partitions.
A new wrinkle arises in the $E\g1_2$ case dealing with the first interval $I_1$ in the diagrams for computing $\smallsmile_n$.
In evaluating a $\smallsmile_n$ product diagram on a cell $(1, T, 1 , \dots)$, the parity of $|I_1|$ determines whether the face to be evaluated on the second row begins with $1$ or $T$.
If $|I_1|$ is even, the second row begins with $T$.
But we know how to count the appropriate partitions mod 2 when $|I_1|$ is even, and also when $|I_1|$ is arbitrary.
So the case $|I_1|$ odd will be the difference, or sum, of those numbers.
Here is the result.

\begin{thm} \label{t:2.2}
	For $\tawd \colon N_*(E\g1_2) \otimes N_*(E\g1_2) \to N_*(E\g1_2) \otimes N_*(E\g1_2)$ we have the chain formula
	\begin{equation*}
	\tawd(\tx_n \otimes \tx_k)\ =\!
	\sum_{i+j = k+n} \binom{i+1}{n+1} \binom{j}{n} \ \tx_i \otimes \tx_j \ + \
	\binom{i}{n+1}\binom{j}{n} \ \tx_i \otimes T \tx_j.
	\end{equation*}
	
	A formula that includes all four evaluation cases for $N_*(E\g1_2)$ is given for $b, a \in \{0, 1\}$ by 
	\begin{equation*}
	\tawd(T^b \tx_n \otimes T^a\tx_k)\ = \sum_{\epsilon = 0, 1}\ \ \sum_{i+j = k+n} c_{n,k,i}^\epsilon \ S^b(T^{a} \tx_i \otimes T^{a+\epsilon}\tx_j ),
	\end{equation*}
	where
	\begin{equation*}
	c_{n,k,i}^0 = \binom{i+1}{n+1}\binom{j}{n}
	\quad \text{and} \quad
	c^1_{n,k,i} = \binom{i}{n+1}\binom{j}{n},
	\end{equation*}
	and where $S$ switches the tensor factors.
\end{thm}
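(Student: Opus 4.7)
The plan is to adapt the proof of Theorem~\ref{t:binomial repeated}, starting again from Steenrod's diagram sum but now tracking which of the two non-degenerate simplices $\tx_\bullet$ or $T\tx_\bullet$ each face produces. Since vertex $p$ of $\Delta^k$ is labeled $T^p$ in $\tx_k$, a face is non-degenerate iff its successive retained vertices have indices of opposite parity. Within a single interval $I_j$ the vertices are already consecutive, so the condition localises at the jumps between intervals in $I_{odd}$ or $I_{even}$ and reproduces exactly the constraints of Section~2.3.1: interior $|I_{2j}|$ must be even for $\tx_k(I_{odd})$ to be non-degenerate, and interior $|I_{2j+1}|$ must be even for $\tx_k(I_{even})$.

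The new wrinkle, as flagged in Section~2.3.5, is that the starting label of each face determines whether it equals $\tx_\bullet$ or $T\tx_\bullet$. The face $I_{odd}$ always begins at vertex $0$, labeled $1$, so it equals $\tx_i$. The face $I_{even}$ begins at vertex $k_1 = |I_1| - 1$, labeled $T^{k_1}$, so it equals $\tx_j$ when $|I_1|$ is odd and $T\tx_j$ when $|I_1|$ is even. The diagram sum thus splits by the parity of $|I_1|$, and the coefficient of $\tx_i \otimes \tx_j$ (resp. $\tx_i \otimes T\tx_j$) counts, mod~$2$, valid diagrams with $|I_1|$ odd (resp.\ even).

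The combinatorial core is to evaluate these two refined counts. The $I_{odd}$ and $I_{even}$ partitions being independent, each coefficient factors as a product. The $I_{even}$ factor carries no restriction on $|I_1|$ and collapses, by exactly the argument of Section~2.3, to $\binom{j}{n}$. For the $I_{odd}$ factor I would use the generating functions $\frac{x}{1-x^2}$ or $\frac{x^2}{1-x^2}$ in the first slot according to the parity of $|I_1|$, $\frac{x^2}{1-x^2}$ in the interior slots, and $\frac{x}{1-x}$ in the terminal slot when $n$ is odd. The mod~$2$ identity $1 - x^2 \equiv (1-x)^2$ collapses each generating function to a single negative power of $1-x$, and the resulting binomial coefficients reduce via Lucas's theorem, in particular $\binom{2a+1}{2b+1} \equiv \binom{a}{b}$ and $\binom{2a}{2b} \equiv \binom{a}{b}$, to $\binom{i+1}{n+1}$ for $|I_1|$ odd and $\binom{i}{n+1}$ for $|I_1|$ even, independently of the parity of $n$.

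For the unified four-case formula: replacing $\tx_k$ by $T\tx_k$ relabels every vertex $p$ by $T^{p+1}$, which uniformly swaps $\tx_\bullet \leftrightarrow T\tx_\bullet$ in both tensor factors and exactly produces the $T^{a+\epsilon}$ pattern. Replacing $\tx_n$ by $T\tx_n$ invokes the $\Sigma_2$-equivariance of $\tawd$, under which $T$ on the left copy of $N_*(E\Sigma_2)$ swaps the two tensor factors on the right, supplying the outer $S^b$. The main obstacle is the binomial identity in the previous paragraph --- the case split on the parity of $n$ combined with Lucas's theorem --- but this is in the same spirit as the combinatorial facts deferred to the appendix and is most naturally handled there.
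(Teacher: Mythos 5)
Your proposal is correct and follows essentially the same route as the paper: Steenrod's diagram sum, non-degeneracy forcing the interior intervals to be even, the parity of $|I_1|$ deciding between $\tx_j$ and $T\tx_j$ in the second factor, mod 2 partition counts for the coefficients, and naturality in the second variable plus $S$-equivariance in the first to get the four-case formula. The only (harmless) difference is that you count the $|I_1|$-odd diagrams directly by generating functions, uniformly in the parity of $n$ (and once you use $1-x^2 \equiv (1-x)^2$ mod 2 the Lucas step is not actually needed), whereas the paper counts the $|I_1|$-even case and obtains the odd case as the mod 2 difference from the total already computed for Theorem~\ref{binom1}, via $\binom{i}{n}+\binom{i}{n+1}=\binom{i+1}{n+1}$.
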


\begin{proof}
	The first statement will be proved in the combinatorial appendix.
	The remaining evaluations are determined by equivariance.\\

	For example, applying $T$ to the second variable $\tx_k$, one uses naturality of $\tawd \colon N_*(E\g1_2) \otimes N_*(X) \to N_*(X) \otimes N_*(X)$ in $X$.
	Applying $T$ to the first variable $\tilde x_n$, one uses the equivariance that applies the operator $S$ interchanging the two factors in the range.
\end{proof}

Note that we have
\begin{equation*}
\binom{i}{n+1} + \binom{i+1}{n+1} = \binom{i}{n+1} + \binom{i}{n} + \binom{i}{n+1} \equiv \binom{i}{n} \pmod{2},
\end{equation*}
in agreement with Theorem~\ref{t:binomial repeated}.

\section{The Chain Maps Phi and Psi}

The main results Theorem~\ref{Adem1} and Theorem~\ref{Adem2} are proved in this section, modulo explaining the operad method for extending Steenrod's $\smallsmile_n$ operations to multi-variable cochain operations.\footnote{The operad discussion is carried out in Section 4.} The key steps in the proofs of Theorem~\ref{Adem1} and Theorem~\ref{Adem2} amount to constructing some chain homotopies with target various complexes $N_*(EG)$, arising from certain equivariant chain maps.
The needed equivariant chain maps are constructed in Subsections 3.1 and 3.2.
The maps they induce on chain complexes are computed in Subsections 3.3 and 3.4.
The formulae for these chain maps in Corollary~\ref{cor3.2} and Corollary~\ref{cor3.3} look somewhat complicated, but do not really involve anything more than the classical Alexander-Whitney diagonal map and the computation of the $\smallsmile_n$ products in $N_*(E\Sigma_2)$.\\

From Subsection 1.3, certain pairs of equivariant chain maps are connected by canonical chain homotopies.
Theorem~\ref{t:3.4}, Corollary~\ref{cor3.5} and Corollary~\ref{cor3.6} in Subsection 3.4 summarize these chain homotopy formulae in our special cases.
Then in Subsection 3.5 we summarize how these results imply Theorem~\ref{Adem1} and Theorem~\ref{Adem2}, modulo explaining how operad methods produce actions of various complexes $N_*(EG)$ on multi-tensors of cochains.

\subsection{Some Dihedral Group Actions}\label{sect3.1}

\paragraph{3.1.1} Let $D_8$ be the dihedral group of order 8, with generators $a, b, c$.
These generators are all of order 2;  $[b,c] = 1$; $ab = ca$; and $ac = ba$.
These are the relations that hold for the inclusion $D_8 \subset \Sigma_4$ given by $b = (12)$, $c = (34)$, $a = (13)(24)$.
Note that $V_4 \subset D_8$ is the subgroup generated by $\{a, bc\}$.\\
 
We define a left action of $D_8$ on $ N_*(E\Sigma_2)\otimes N_*(E\Sigma_2)\otimes N_*(E\Sigma_2)$ as follows.
Let $T$ be the generator of $\Sigma_2$ with its natural action on $ N_*(E\Sigma_2)$.
Then
\begin{align*}
a(x \otimes y \otimes z) & = Tx \otimes z \otimes y, \\
b(x \otimes y \otimes z) & = x \otimes Ty \otimes z, \\
c(x \otimes y \otimes z) & = x \otimes y \otimes Tz.
\end{align*}
One checks easily that these equations define a group action.
Since $N_*(E\Sigma_2)$ is an acyclic chain complex of free $\FF_2[\Sigma_2]$-modules, it is easy to see that this action makes $N_*(E\Sigma_2) \otimes N_*(E\Sigma_2) \otimes N_*(E\Sigma_2)$ an acyclic chain complex of free $\FF_2[D_8]$-modules.

\paragraph{3.1.2} Next, since $EZ$ is an associative operation on products of spaces, there is a well-defined Eilenberg-Zilber map
\begin{equation*}
EZ\colon N_*(E\Sigma_2)\otimes N_*(E\Sigma_2)\otimes N_*(E\Sigma_2) \to N_*(E\Sigma_2 \times E\Sigma_2 \times E\Sigma_2).
\end{equation*}
We also give the range of this map a $D_8$ action.
In fact, $D_8$ acts on the space $E\Sigma_2\times E\Sigma_2\times E\Sigma_2$ by the same formulae on cell triples $(x, y, z)$ of the same degree as the above formulae on basic tensor triples.
The space $E\Sigma_2 \times E\Sigma_2 \times E\Sigma_2$ is then a contractible, free $D_8$ space.
The map $EZ$ is equivariant with respect to the two actions.
The Alexander-Whitney map
\begin{equation*}
AW \colon  N_*(E\Sigma_2\times E\Sigma_2\times E\Sigma_2) \to N_*(E\Sigma_2)\otimes N_*(E\Sigma_2)\otimes N_*(E\Sigma_2)
\end{equation*}
is equivariant for the actions of $b$ and $c$, but not for the action of $a$.

\paragraph{3.1.3} The action of $D_8$ on the product space $E\Sigma_2\times E\Sigma_2\times E\Sigma_2$ is determined by the action on the vertices $\g1_2 \times \g1_2 \times \g1_2$, extended coordinate-wise to
\begin{equation*}
(E\Sigma_2\times E\Sigma_2\times E\Sigma_2)_n \simeq (\Sigma_2\times \Sigma_2\times \Sigma_2)^{n+1}.
\end{equation*}
The free left action of $D_8$ on vertices identifies the group $D_8$ with the set $\g1_2 \times \g1_2 \times \g1_2$, viewed as the $D_8$-orbit of $(1, 1, 1)$.
This identification determines a multiplication on $\g1_2 \times \g1_2 \times \g1_2$, making this set of triples into a group isomorphic to $D_8$.
Moreover, with this group structure and the coordinatewise action of $D_8$ on $n$-simplices of $E(\g1_2 \times \g1_2 \times \g1_2) \simeq E\Sigma_2\times E\Sigma_2\times E\Sigma_2$, we see that in fact we have defined an isomorphism of simplicial sets $E\Sigma_2 \times E\Sigma_2 \times E\Sigma_2 \simeq ED_8$, as free $D_8$ complexes.\\

The actual formula for the induced group product on triples $\g1_2 \times \g1_2 \times \g1_2$ is somewhat tricky.
We can name the triples $(T^{\epsilon_1}, T^{\epsilon_2}, T^{\epsilon_3})$, with $\epsilon_j \in \{0, 1\}$.
Such a triple is identified with the element $c^{\epsilon_3} b^{\epsilon_2}a^{\epsilon_1} \in D_8$.
This is true because evaluating that $D_8$ element on $(1,1,1)$ indeed yields $(T^{\epsilon_1}, T^{\epsilon_2}, T^{\epsilon_3})$.\\

To multiply triples, one simply computes products $c^{\delta_3}b^{\delta_2}a^{\delta_1} c^{\epsilon_3} b^{\epsilon_2} a^{\epsilon_1} \in D_8$, using the relations $ac = ba, ab = ca, bc = cb$ to  move $a^{\delta_1}$ across $c^{\epsilon_3}b^{\epsilon_2}$ and to commute $b$'s and $c$'s.
The result will then have the form $c^{\gamma_3}b^{\gamma_2}a^{\gamma_1},$ which translates to the product of triples.\footnote{This product on triples can be viewed as a semi-direct product multiplication on $D_8 = \g1_2 \ltimes (\g1_2 \times \g1_2)$, with the subgroup $\langle a \rangle = \g1_2$ acting by conjugation on the normal subgroup $\langle b, c\rangle = \g1_2 \times \g1_2$.} 

\paragraph{3.1.4} A much simpler discussion applies to $\Sigma_2 \times \Sigma_2$ acting on $N_*(E\Sigma_2) \otimes N_*(E\Sigma_2)$ by the tensor product of the natural action of the group factors on the tensor factors, and on $ N_*(E\Sigma_2 \times E\Sigma_2)$ by the product action on the product space.
Both the $EZ$ and $AW$ maps are equivariant in this case.\\

Notice that the product $E\Sigma_2\times E\Sigma_2$ is naturally identified with $E(\Sigma_2\times \Sigma_2)$.
Thus, we can view the Alexander-Whitney map as a map
\begin{equation*}
AW\colon N_*(E(\Sigma_2\times\Sigma_2)) \to N_*(E\Sigma_2)\otimes N_*(E\Sigma_2).
\end{equation*}
It is equivariant with respect to the natural $\Sigma_2 \times \Sigma_2$-actions.

\subsection{The Map Phi} \label{sect3.2}
 
\paragraph{3.2.1} We have the Alexander-Whitney diagonal map
\begin{equation*}
\awd \colon N_*(E\Sigma_2)\to N_*(E\Sigma_2)\otimes N_*(E\Sigma_2)
\end{equation*}
and the map of Steenrod
\begin{equation*}
\tawd\colon N_*(E\Sigma_2)\otimes N_*(E\Sigma_2)\to N_*(E\Sigma_2)\otimes N_*(E\Sigma_2).
\end{equation*}
We form the composition below, $\Phi = (\Id \otimes \, \tawd) \circ (\awd \otimes \Id)$,
\begin{equation*}
\begin{tikzcd}
N_*(E\Sigma_2) \otimes N_*(E\Sigma_2) \arrow[r, "\awd \otimes \Id"] &[10pt]
N_*(E\Sigma_2 )\otimes N_*(E\Sigma_2) \otimes N_*(E\Sigma_2) \arrow[r, "\Id \otimes \tawd"] &[6pt]
N_*(E\Sigma_2)\otimes N_*(E\Sigma_2) \otimes N_*(E\Sigma_2).
\end{tikzcd}
\end{equation*}
 
\paragraph{3.2.2} The following result will be important for the later construction of certain chain homotopies.
\begin{thm} \label{t:3.1}
	The composition $\Phi$ is equivariant with respect to the embedding
	\begin{equation*}
	\Sigma_2\times \Sigma_2\to D_8
	\end{equation*}
	that sends the generator of the first factor to $a$ and the generator of the second factor to $bc$.
	(This embedding coincides with the inclusion $V_4 \subset D_8$ as subgroups of $\Sigma_4$.)
\end{thm}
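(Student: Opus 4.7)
The plan is to verify the $(\Sigma_2 \times \Sigma_2)$-equivariance of $\Phi$ by checking it on the two generators of $V_4 \simeq \Sigma_2 \times \Sigma_2$, namely the generator $T_1$ of the first factor (which embeds as $a$) and the generator $T_2$ of the second factor (which embeds as $bc$). Writing $\awd(x) = \sum x' \otimes x''$ schematically, the map unfolds as
\[\Phi(x \otimes y) \ = \ \sum x' \otimes \tawd(x'' \otimes y),\]
so the task reduces to tracking how $T_1$ and $T_2$ commute past $\awd$ and $\tawd$ using the three structural properties of these maps available from Subsections 2.1--2.2: naturality of $\awd$, naturality of $\tawd$ in its $X$-slot, and $\Sigma_2$-equivariance of $\tawd$ in its $E\Sigma_2$-slot.

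For $T_2$, which acts as $\Id \otimes T$ on the domain and as $bc$ on the target (applying $T$ to both the second and third tensor factors of $N_*(E\Sigma_2)^{\otimes 3}$), I would invoke naturality of $\tawd$ in the $X$-variable with respect to the simplicial self-map $T \colon E\Sigma_2 \to E\Sigma_2$, giving
\[\tawd(x'' \otimes Ty) \ = \ (T \otimes T)\,\tawd(x'' \otimes y).\]
Substituting into $\Phi(x \otimes Ty)$ yields precisely $bc \cdot \Phi(x \otimes y)$.

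For $T_1$, which acts as $T \otimes \Id$ on the domain and as $a$ on the target, two properties enter in tandem. Naturality of $\awd$ gives $\awd(Tx) = \sum Tx' \otimes Tx''$, and $\Sigma_2$-equivariance of $\tawd$ gives $\tawd(Tx'' \otimes y) = S\,\tawd(x'' \otimes y)$, where $S$ is the switch of tensor factors. Combining them,
\[\Phi(Tx \otimes y) \ = \ \sum Tx' \otimes S\,\tawd(x'' \otimes y),\]
and this matches $a \cdot \Phi(x \otimes y)$ because the $a$-action applies $T$ to the first factor and simultaneously transposes the second and third factors.

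The only subtle point, and really the content of the theorem, is the compatibility in the $T_1$ case: the transposition $S$ produced by Steenrod's equivariance must exactly match the transposition of the second and third tensor factors built into the $a$-action on $N_*(E\Sigma_2)^{\otimes 3}$. This is what forces the embedding $T_1 \mapsto a$ (rather than to $b$ or $c$) to be the canonical one; the rest is a straightforward unpacking of definitions.
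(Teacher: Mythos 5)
Your proof is correct and uses exactly the same ingredients as the paper's own argument: for the second-factor generator, the naturality (equivariance) identity $\tawd(x\otimes Ty)=(T\otimes T)\tawd(x\otimes y)$, and for the first-factor generator, $\awd(Tx)=(T\otimes T)\awd(x)$ together with $\tawd(Tx''\otimes y)=S\,\tawd(x''\otimes y)$, matched against the $a$- and $bc$-actions on $N_*(E\Sigma_2)^{\otimes 3}$. The paper's proof is just a compressed statement of these same two verifications, so no further comment is needed.
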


\begin{proof}
	This follows for the involution in the first factor immediately from the fact that $\awd (Tx)= (T \otimes T)(\awd (x))$ and $\tawd(Ty \otimes z) = S(\tawd(y \otimes z))$, where $S$ denotes the switch of tensor factors.
	It follows for the involution in the second factor from the fact that $\tawd(x \otimes Ty) = (T \otimes T)(\tawd(x \otimes y))$.
\end{proof}

\paragraph{3.2.3} As an immediate consequence from the definition $\widetilde x_q = (1, T, 1, \ldots,T^q)$ and the definition of $\awd$ we have the following.
\begin{lem*}
	The Alexander-Whitney diagonal approximation for the simplicial set $N_*(E\Sigma_2)$ is given by
	\begin{equation*}
	\awd (\widetilde x_q)=\sum_{i=0}^q\widetilde x_i\otimes T^i\widetilde x_{q-i}
	\end{equation*}
	and 
	\begin{equation*}
	\awd (T\widetilde x_q)=\sum_{i=0}^qT\widetilde x_i\otimes T^{i+1}\widetilde x_{q-i}.
	\end{equation*}
\end{lem*}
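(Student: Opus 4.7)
The plan is to derive both formulae directly from the definitions in \S1.1.5 and \S1.2.1, since the statement really is an unpacking of the Alexander–Whitney formula on the specific simplex $\widetilde{x}_q=(1,T,1,\ldots,T^q)\in E\Sigma_2$. First I would recall that $\awd=AW\circ\Delta_*$ applied to a simplex $(g_0,\ldots,g_n)\in (EG)_n$ yields the sum of front faces tensored with back faces,
\begin{equation*}
\awd(g_0,\ldots,g_n)\ =\ \sum_{i=0}^{n}\,(g_0,\ldots,g_i)\otimes(g_i,\ldots,g_n),
\end{equation*}
as displayed in \S2.1.5. Applied term-by-term to $\widetilde{x}_q$, the front face of length $i+1$ is $(1,T,1,\ldots,T^i)=\widetilde{x}_i$ by definition, and the back face starting at position $i$ is $(T^i,T^{i+1},\ldots,T^q)$.

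The key identification, which is really the only content, is that $(T^i,T^{i+1},\ldots,T^q)=T^i\cdot(1,T,\ldots,T^{q-i})=T^i\widetilde{x}_{q-i}$, using the left $\Sigma_2$-action from \S1.2.1 that multiplies every entry on the left. Substituting, this immediately gives the first formula
\begin{equation*}
\awd(\widetilde{x}_q)\ =\ \sum_{i=0}^{q}\widetilde{x}_i\otimes T^i\widetilde{x}_{q-i}.
\end{equation*}

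For the second formula I would invoke the equivariance of $\awd$ under the diagonal $\Sigma_2$-action. Since $\Delta\colon E\Sigma_2\to E\Sigma_2\times E\Sigma_2$ intertwines the $\Sigma_2$-action with the diagonal action on the product, and $AW$ commutes with an automorphism applied simultaneously to both factors, one has $\awd(Tx)=(T\otimes T)\awd(x)$. Applying this to the first formula and absorbing the $T$ into $T^i$ on the right-hand tensor factor yields
\begin{equation*}
\awd(T\widetilde{x}_q)\ =\ \sum_{i=0}^{q}T\widetilde{x}_i\otimes T^{i+1}\widetilde{x}_{q-i},
\end{equation*}
as claimed. Alternatively, one can re-run the first calculation directly on $T\widetilde{x}_q=(T,1,T,\ldots,T^{q+1})$, whose front face of length $i+1$ is $T\widetilde{x}_i$ and whose back face starting at position $i$ is $T^{i+1}\widetilde{x}_{q-i}$; the two arguments agree.

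There is no real obstacle here: the lemma is essentially a bookkeeping identity, and the only thing to watch is the convention for the left $\Sigma_2$-action on group-valued tuples so that the shift $T^i$ on the tensor factor is placed correctly. Once that is pinned down, both identities drop out in a single line.
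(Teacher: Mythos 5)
Your proposal is correct and matches the paper's (implicit) argument: the lemma is stated there as an immediate consequence of the front-face/back-face formula $\awd(g_0,\ldots,g_n)=\sum_i (g_0,\ldots,g_i)\otimes(g_i,\ldots,g_n)$ from \S2.1.5 applied to $\widetilde{x}_q=(1,T,\ldots,T^q)$, exactly as you unpack it, including the identification of the back face with $T^i\widetilde{x}_{q-i}$ via the left action. Your use of the equivariance $\awd(Tx)=(T\otimes T)\awd(x)$ (or the direct recomputation on $T\widetilde{x}_q$) for the second identity is likewise consistent with the paper, which invokes the same equivariance in the proof of Theorem~\ref{t:3.1}.
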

 
\paragraph{3.2.4} We recall from Theorem~\ref{t:2.2} that the map 
\begin{equation*}
\tawd \colon N_*(E\Sigma_2) \otimes N_*(E\Sigma_2) \to N_*(E\Sigma_2)\otimes  N_*(E\Sigma_2)
\end{equation*}
is determined by $\g1_2 \times \g1_2$ equivariance and the formula
\begin{equation*}
\tawd(\widetilde x_n \otimes \widetilde x_k) =
\sum_{\epsilon = 0,1} \sum_{i+j = k+n} c_{n,k,i}^\epsilon \, \widetilde x_i \otimes T^\epsilon \widetilde x_j,
\end{equation*}
where the coefficients $c_{n,k,i}^\epsilon$ are given in Theorem~\ref{t:2.2}.
Specifically, with $j = n+k-i$,
\begin{equation*}
c^0_{n,k,i} = \binom{i+1}{n+1} \binom{j}{n}\ \ \ \ \text{and}\ \ \ c_{n,k,i}^1 = \binom{i}{n+1}\binom{j}{n}.
\end{equation*}
 
Let us write out explicitly the composition $\Phi =(\Id \otimes \tawd) \circ (\awd \otimes \Id)$.
Thus, for $a \in \{0, 1\}$, we have from Lemma 3.2.3 and the full version of Theorem~\ref{t:2.2} that includes the equivariance
\begin{equation} \label{Phi1}
\begin{split}
\Phi(\widetilde x_q\otimes T^a\widetilde x_p) & = \sum_{i=0}^q \tilde x_i \otimes \tawd(T^i \tilde x_{q-i} \otimes T^a \tilde x_p) \\ & =
\sum_{i=0}^q \widetilde x_i \otimes \left(\sum_{\epsilon=0,1} \sum_{j=0}^{p+q-i}c_{q-i,p,j}^\epsilon S^i(T^a\widetilde x_j\otimes T^{a+\epsilon} \widetilde x_{p+q-i-j})\right), 
\end{split}
\end{equation}
where $S$ is the switch of factors.
We also have
\begin{equation} \label{Phi2}
\begin{split}
\Phi(T \widetilde x_q \otimes T^a \widetilde x_p) & =
\sum_{i=0}^q T \tilde x_i \otimes \tawd(T^{i+1} \tilde x_{q-i} \otimes T^a \tilde x_p) \\ &=
\sum_{i=0}^q T\widetilde x_i \otimes \left(\sum_{\epsilon = 0,1} \sum_{j=0}^{p+q-i}c_{q-i,p,j}^\epsilon \ S^{i+1}(T^a\widetilde x_j\otimes T^{a+\epsilon}\widetilde x_{p+q-i-j})\right).
\end{split}
\end{equation}

In parsing these formulae, the subscripts $n, k, i, j$ in the expression for $\tawd(T^b \widetilde x_n \otimes T^a \widetilde x_k)$ of Theorem~\ref{t:2.2} become subscripts $q-i, p, j, p+q-i-j$ in the expressions for $\tawd(T^b\widetilde x_{q-i}\otimes T^a\widetilde x_p)$ that occur in identities \eqref{Phi1} and \eqref{Phi2}.
 
\subsection{Explicit computation of $\bar\Phi$} \label{sect3.3}
 
\paragraph{3.3.1} From the formulae in the previous subsection, along with equivariance, we can deduce formulae for actions of $\Phi$ on certain quotients of the domain and range of $\Phi$.
Consider the map formed from $\Phi$ by first dividing the domain of $\Phi$ by the involution on the second factor of $N_*(E\g1_2) \times N_*(E\g1_2)$, and dividing the range of $\Phi$ by the corresponding action of $bc$.
Follow that by dividing the range of $\Phi$ by the full subgroup $\{b,c\}$,
\begin{equation*}
\bar \Phi \colon N_*(E\Sigma_2)\otimes N_*(B\Sigma_2) \to N_*(E\Sigma_2) \otimes N_*(B\Sigma_2) \otimes N_*(B\Sigma_2).
\end{equation*}
 
From Formula~\eqref{Phi1} of \S{3.2.4}, and since $c^0_{q-i,p,j} + c^1_{q-i,p,j} = \binom{j}{q-i} \binom{p+q-i-j}{q-i}$, we have

\begin{equation} \label{Phi3}
\bar \Phi (\widetilde x_q \otimes x_p)\ = \
\sum_{i=0}^q \widetilde x_i \otimes \left( \sum_{j=0}^{p+q-i}\begin{pmatrix}j \\ q-i \end{pmatrix} \begin{pmatrix} p+q-i-j \\ q-i \end{pmatrix} S^i(x_j \otimes x_{p+q-i-j}) \right).
\end{equation}

In this summation, the binomial coefficient product is 0 unless $0 \leq q-i \leq j \leq p$.
We assume these inequalities going forward.\footnote{The binomial coefficients arose when we were counting diagrams in Section 2.2 related to partitions of integers that were used to compute $\smallsmile_n$ products of cochains. Sometimes there are no diagrams of certain shape that evaluate non-trivially on a tensor product of cochains of given dimensions. You do not need a binomial coefficient formula to count the number of positive partitions of $N$ into $M > N$ summands.}

\paragraph{3.3.2} We will rewrite the sum (3.3) so as to easily distinguish the symmetric and non-symmetric terms in the second two tensor factors of the expression.\\ 
  
We set $\ell =(p-q+i)/2$, an element of $\ZZ[\frac{1}{2}].$  We set $a=p-\ell-j$ so that $a$ is congruent to $\ell$ modulo $\Zee$.
Also, $0 \leq p - j = \ell + a$ and $0 \leq j - q + i = \ell - a$.
Thus $-\ell \leq a \leq\ell$.\\

We have $i = q-p+2\ell, \ j = p-\ell-a, \ q-i = p-2\ell$, and $p+q-i-j = p-\ell+a$. Substitute these values into the sum in Formula~\eqref{Phi3}.

\begin{equation} \label{Phi4}
\bar \Phi(\widetilde x_q\otimes x_p) \ = \!\!\! 
\sum_{\substack{-\ell \le a\le \ell;\ \\ a\equiv \ell \pmod \Zee}}
\begin{pmatrix} p-\ell-a \\ p-2\ell \end{pmatrix}
\begin{pmatrix} p-\ell+a \\ p-2\ell \end{pmatrix}
\widetilde x_{q-p+2\ell} \otimes S^{q-p+2\ell} (x_{p-\ell-a} \otimes x_{p-\ell+a}).
\end{equation}

The sum is finite since $\ell \in \ZZ[\frac{1}{2}]$ and $0 \leq \ell \leq p/2$.
We also must have $0 \leq 2\ell+q-p$.
We can remove the powers of the switching operator $S$ in Formula~\eqref{Phi4} because for fixed $\ell$ there is an $S$-invariant term with $a = 0$ and the other terms occur in pairs with indices $\ell, a$ and $\ell, -a$ whose sum is invariant under $S^{q-p+2\ell}$.\\ 

\begin{cor} \label{cor3.2} 
	In $ N_*(E\Sigma_2)\otimes N_*(B\Sigma_2)\otimes N_*(B\Sigma_2)$ we have
	$$\bar\Phi(\widetilde x_q\otimes x_p) = S_{q,p}+NS_{q,p},$$
	where the symmetric terms are
	$$S_{q,p}=\sum_\ell \begin{pmatrix}p-\ell \\ p-2\ell\end{pmatrix}\widetilde x_{q-p+2\ell}\otimes x_{p-\ell}\otimes x_{p-\ell},$$
	and the non-symmetric terms are
 	\begin{align*}
	NS_{q,p}\ = \!\! \sum_{\substack{-\ell \le a\le \ell;\ \ a\not=0; \\ a\equiv \ell\pmod \Zee}}\begin{pmatrix}p-\ell-a \\ p-2\ell\end{pmatrix}\begin{pmatrix}p-\ell+a \\ p-2\ell\end{pmatrix}\widetilde x_{q-p+2\ell}\otimes x_{p-\ell-a}\otimes x_{p-\ell+a}.
	\end{align*}
	 By symmetry, we also have a formula $\bar\Phi(\widetilde x_p\otimes x_q) = S_{p,q} + NS_{p,q}$.
\end{cor}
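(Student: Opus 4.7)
The plan is to start from Formula~\eqref{Phi4}, which we have already as a consequence of the change of variables $\ell = (p-q+i)/2$, $a = p - \ell - j$ applied to Formula~\eqref{Phi3}. The corollary is then essentially a matter of reorganizing that sum according to the $a = 0$ versus $a \neq 0$ dichotomy and exploiting a $\ZZ/2$-symmetry $a \leftrightarrow -a$ to remove the powers of the switching operator $S$.

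First, I would split the indexing set $\{-\ell \leq a \leq \ell,\ a \equiv \ell \pmod{\ZZ}\}$ of \eqref{Phi4} into the singleton $\{a = 0\}$ (which occurs precisely when $\ell \in \ZZ$) and the remaining terms with $a \neq 0$. For the $a = 0$ contribution, the binomial coefficient becomes $\binom{p-\ell}{p-2\ell}^2$, which is congruent to $\binom{p-\ell}{p-2\ell}$ modulo $2$; moreover the tensor $x_{p-\ell} \otimes x_{p-\ell}$ is manifestly $S$-invariant, so $S^{q-p+2\ell}$ acts trivially on it. This produces exactly the claimed sum $S_{q,p}$.

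For the $a \neq 0$ terms, I would pair the summands with index $a$ and with index $-a$. The binomial coefficient product $\binom{p-\ell-a}{p-2\ell}\binom{p-\ell+a}{p-2\ell}$ is invariant under $a \mapsto -a$, and the sum
\begin{equation*}
x_{p-\ell-a} \otimes x_{p-\ell+a} + x_{p-\ell+a} \otimes x_{p-\ell-a}
\end{equation*}
is $S$-invariant, hence fixed by $S^{q-p+2\ell}$. Consequently the contribution of each pair equals the same expression without the $S^{q-p+2\ell}$ in front, and reassembling over all $a \neq 0$ yields the formula for $NS_{q,p}$. Combining with the $a = 0$ case gives $\bar\Phi(\widetilde x_q \otimes x_p) = S_{q,p} + NS_{q,p}$.

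Finally, the companion identity $\bar\Phi(\widetilde x_p \otimes x_q) = S_{p,q} + NS_{p,q}$ is obtained by running the entire argument with the roles of $p$ and $q$ interchanged. I do not anticipate a genuine obstacle here: the only mildly delicate point is keeping track of the parity condition $a \equiv \ell \pmod{\ZZ}$ when isolating the $a = 0$ term, and verifying that the range constraints $0 \leq q - p + 2\ell$ and $0 \leq \ell \leq p/2$ carried over from \eqref{Phi4} remain correctly encoded by the summation $\sum_\ell$ in the statement.
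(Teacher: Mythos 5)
Your proposal is correct and follows essentially the same route as the paper: the paper also derives Corollary~\ref{cor3.2} directly from Formula~\eqref{Phi4}, removing the powers of $S$ by noting the $a=0$ term is $S$-invariant (with $\binom{p-\ell}{p-2\ell}^2 \equiv \binom{p-\ell}{p-2\ell} \pmod 2$) and the $a \neq 0$ terms pair off under $a \leftrightarrow -a$ into $S^{q-p+2\ell}$-invariant sums. No gaps to report.
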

 
\paragraph{3.3.3} The map we have constructed 
$$\bar\Phi\colon N_*(E\Sigma_2)\otimes N_*(B\Sigma_2) \to N_*(E\Sigma_2)\otimes N_*(B\Sigma_2)\otimes N_*(B\Sigma_2)$$
is equivariant with respect to the remaining $\Sigma_2$ actions on both sides.
On the range, this involution is $T \otimes S$, where $S$ switches the second and third factors.
$\bar\Phi$ passes to the quotient to give a map that we will also call $\bar \Phi$,
$$\bar \Phi \colon N_*(B\Sigma_2)\otimes N_*(B\Sigma_2) \to \bigl( N_*(E\Sigma_2)\otimes N_*(B\Sigma_2)\otimes N_*(B\Sigma_2)\bigr)_{\g1_2.}$$
This last complex can also be written as $\bigl( N_*(E\Sigma_2)\otimes N_*(E\Sigma_2)\otimes N_*(E\Sigma_2)\bigr)_{D_8},$ where $D_8$ is the dihedral group acting freely as described in \S3.1.1.
As such, the homology of this coinvariant complex is the homology of $BD_8$.
 
\begin{cor} \label{cor3.3}
	We have
	\begin{equation*}
	\bar\Phi( x_q\otimes x_p) = \widehat{S}_{q,p}+\partial \widehat{NS}_{q,p} \in \bigl( N_*(E\Sigma_2)\otimes N_*(B\Sigma_2)\otimes N_*(B\Sigma_2)\bigr)_{\g1_2},
	\end{equation*}
	where
	\begin{align*}
	\widehat{NS}_{q,p} \ = \!\!
	\sum_{\substack{0 < a\le \ell; \\ a\equiv \ell\pmod \Zee}}\begin{pmatrix}p-\ell-a \\ p-2\ell\end{pmatrix}\begin{pmatrix}p-\ell+a \\ p-2\ell\end{pmatrix} [\tilde x_{q-p+2\ell+1} \otimes x_{p-\ell -a} \otimes x_{p-\ell+a}]
	\end{align*}
	and where $$\widehat{S}_{q,p} = \sum_\ell \begin{pmatrix}p-\ell \\ p-2\ell\end{pmatrix}\ [\tilde{x}_{q-p+2\ell}\otimes x_{p-\ell}\otimes x_{p-\ell}].$$
	By symmetry, we also have a formula  $\bar\Phi( x_p\otimes x_q) = \widehat{S}_{p,q} + \partial \widehat{NS}_{p,q}$.
\end{cor}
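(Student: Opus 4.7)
The plan is to obtain the formula for $\bar\Phi(x_q \otimes x_p)$ by passing Corollary~\ref{cor3.2} through the projection to coinvariants under the $T \otimes S$ action (where $S$ switches the last two tensor factors). Since $\bar\Phi$ is equivariant and $\tilde{x}_q \in N_*(E\Sigma_2)$ projects to $x_q \in N_*(B\Sigma_2)$, we have $\bar\Phi(x_q \otimes x_p) = [S_{q,p}] + [NS_{q,p}]$ in the coinvariant complex. The symmetric part is immediate: $[S_{q,p}] = \widehat{S}_{q,p}$ by inspection, since each term of $S_{q,p}$ is already of the required form. So the heart of the argument is to show $[NS_{q,p}] = \partial \widehat{NS}_{q,p}$.

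The main identity I would establish is this: in the coinvariants, for any $n \geq 0$ and cycles $x_j, x_k \in N_*(B\Sigma_2)$,
\begin{equation*}
[\tilde{x}_n \otimes x_j \otimes x_k] + [\tilde{x}_n \otimes x_k \otimes x_j] = \partial [\tilde{x}_{n+1} \otimes x_k \otimes x_j].
\end{equation*}
This follows from three ingredients already in the paper: (i) $[\tilde{x}_n \otimes x_j \otimes x_k] = [T\tilde{x}_n \otimes x_k \otimes x_j]$ by the definition of the coinvariants; (ii) the boundary formula $\partial \tilde{x}_{n+1} = \tilde{x}_n + T\tilde{x}_n$ from \S1.4.1; and (iii) the fact that each $x_m \in N_*(B\Sigma_2)$ is a cycle, so $\partial(x_k \otimes x_j) = 0$ and the Leibniz expansion of the right-hand side leaves only the $\partial\tilde{x}_{n+1}$ term.

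Next I would pair the terms of $NS_{q,p}$. For each $\ell$ and each $a$ with $0 < a \leq \ell$ and $a \equiv \ell \pmod{\Z}$, the contributions from $+a$ and $-a$ carry the \emph{same} binomial coefficient $\binom{p-\ell-a}{p-2\ell}\binom{p-\ell+a}{p-2\ell}$ (the product is symmetric under $a \mapsto -a$). Their sum in the coinvariants is
\begin{equation*}
\binom{p-\ell-a}{p-2\ell}\binom{p-\ell+a}{p-2\ell}\bigl( [\tilde{x}_{q-p+2\ell}\otimes x_{p-\ell-a}\otimes x_{p-\ell+a}] + [\tilde{x}_{q-p+2\ell}\otimes x_{p-\ell+a}\otimes x_{p-\ell-a}] \bigr),
\end{equation*}
which by the identity above equals $\partial$ of the corresponding summand of $\widehat{NS}_{q,p}$. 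Summing over $\ell$ and $a > 0$ yields $[NS_{q,p}] = \partial \widehat{NS}_{q,p}$, and combining with $[S_{q,p}] = \widehat{S}_{q,p}$ gives the corollary. The symmetric statement $\bar\Phi(x_p \otimes x_q) = \widehat{S}_{p,q} + \partial \widehat{NS}_{p,q}$ follows by swapping the roles of $p$ and $q$ throughout.

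The only mildly delicate point is the bookkeeping with the shift $\ell \in \Z[\tfrac{1}{2}]$ and the parity condition $a \equiv \ell \pmod \Z$, to ensure that the pairing $a \leftrightarrow -a$ really partitions the nonzero terms of $NS_{q,p}$ into pairs (the diagonal $a = 0$ case is exactly what lives in $S_{q,p}$, so it is correctly excluded). Everything else is mechanical once the key identity $[\tilde{x}_n \otimes x_j \otimes x_k] + [\tilde{x}_n \otimes x_k \otimes x_j] = \partial [\tilde{x}_{n+1} \otimes x_k \otimes x_j]$ is in hand, and that identity is the real content of the passage from the $V_4$-level formula of Corollary~\ref{cor3.2} to the $D_8$-level formula of Corollary~\ref{cor3.3}.
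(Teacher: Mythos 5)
Your proposal is correct and is essentially the paper's own argument: the paper likewise passes Corollary~\ref{cor3.2} to the coinvariants, keeps the symmetric terms as $\widehat{S}_{q,p}$, and pairs the $\pm a$ terms (which share the same binomial coefficient) using $[T\tilde{x}\otimes y\otimes z]=[\tilde{x}\otimes z\otimes y]$, $\partial\tilde{x}_{r+1}=\tilde{x}_r+T\tilde{x}_r$, and the fact that the $x_m$ are cycles, so each pair is exactly $\partial[\tilde{x}_{q-p+2\ell+1}\otimes x_{p-\ell-a}\otimes x_{p-\ell+a}]$. Your key identity is precisely the displayed computation in the paper's proof, so there is nothing to add.
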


\begin{proof}
	After dividing by the last $\g1_2$ action, we have in the coinvariant complex $[T\tilde{x} \otimes y \otimes z] = [\tilde{x} \otimes z \otimes y]$.
	So we can combine pairs of terms in Corollary~\ref{cor3.2}.
	Thus with $0 < a$ we have
	\begin{align*}
	\Big[ \tilde{x}_{q-p +2\ell} \otimes (x_{p-\ell-a}\otimes x_{p-\ell+a} + x_{p-\ell+a}\otimes x_{p-\ell-a}) \Big] & =
	\Big[ (\tilde{x}_{q-p +2\ell} +T\tilde{x}_{q-p+2l})\otimes x_{p-\ell-a}\otimes x_{p-\ell+a} \Big] \\ & =
	\partial \Big[ \tilde{x}_{q-p+2\ell+1}\otimes x_{p-\ell-a}\otimes x_{p-\ell+a} \Big].
	\end{align*}
	In parsing this formula and the statement of Corollary~\ref{cor3.3} it is useful to observe that basis elements in the coinvariant complex
	\begin{equation*}
	\Big( N_*(E\Sigma_2)\otimes N_*(B\Sigma_2)\otimes N_*(B\Sigma_2) \Big)_{\g1_2}
	\end{equation*}
	which are non-symmetric in the second two factors have unique names in the form $[T^a \tilde x_r \otimes x_s \otimes x_t]$, with $a \in \{ 0, 1\}$ and $s < t$.
	For symmetric elements, $[\tilde{x}_r \otimes x_s \otimes x_s] = [T\tilde{x}_r \otimes x_s \otimes x_s]$.
	The result is then clear.
\end{proof}

\subsection{The Map Psi}

\paragraph{3.4.1} In \S3.1.3 we identified set-wise $D_8 = \g1_2 \times \g1_2 \times \g1_2$.
We also implicitly translated the product in $D_8$ to a corresponding product of triples.
The group action of $D_8$ on the left of simplices in $E(\g1_2 \times \g1_2 \times \g1_2)$ described in \S3.1.2 then gives an identification of free $D_8$ simplicial sets $ED_8 = E(\g1_2 \times \g1_2 \times \g1_2) .$   We can also identify $E(\g1_2 \times \g1_2 \times \g1_2) $ with $ E\g1_2 \times E\g1_2 \times E\g1_2$.

\paragraph{3.4.2} We form the composition below, $ \Psi = EZ \circ \Phi \circ AW $,
$$N_*(E\g1_2 \times E\g1_2) \xrightarrow{AW} N_*(E\Sigma_2)\otimes N_*(E\Sigma_2) \buildrel \Phi \over\longrightarrow N_*(E\Sigma_2)\otimes N_*(E\Sigma_2)\otimes N_*(E\Sigma_2)$$ 
$$ \xrightarrow{EZ} N_*(E\Sigma_2\times E\Sigma_2\times E\Sigma_2) \simeq N_*(ED_8).$$ 

The last equivalence is from our identification of the simplicial set $ED_8$ with $E\Sigma_2 \times E\Sigma_2 \times E\Sigma_2$ discussed just above.\\

The map $\Psi$ is equivariant with respect to the inclusion $\Sigma_2\times \Sigma_2 \simeq V_4 \subset D_8$ that sends the first generator to $a$ and the second to $bc$.
This holds because $\Phi$ has this equivariance property by Theorem~\ref{t:3.1}, and the $AW$ and $EZ$ maps are also suitably equivariant.

\begin{thm} \label{t:3.4}
	Let $$\iota_* \colon  N_*(E(\Sigma_2\times \Sigma_2))\to N_*(ED_8)$$
	be the inclusion induced by the inclusion $\iota \colon \Sigma_2\times \Sigma_2 \simeq V_4 \subset D_8$.
	Then there is an explicit $\iota$-equivariant chain homotopy
	$J_\Psi$ between $\Psi$ and $\iota_*$, given by the formula 
	\begin{align*} J_\Psi(g_0, g_1, \ldots, g_n) & = \sum_j \iota_*(g_0, \ldots, g_j) * \Psi(g_j, \ldots, g_n) \\
	 & = \sum_j (g_0, \ldots, g_j, \Psi(g_j, \ldots, g_n)).
	\end{align*} 
\end{thm}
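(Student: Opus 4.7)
The plan is to realize Theorem~\ref{t:3.4} as a direct application of Theorem~\ref{t:homotopy for iota and Psi} to the pair $(\iota, \Psi)$, where $\iota \colon \Sigma_2 \times \Sigma_2 \simeq V_4 \hookrightarrow D_8$ is the embedding sending the two generators to $a$ and $bc$. To invoke that theorem, I must verify three things: (i) $\Psi$ is a chain map, (ii) $\Psi$ is $\iota$-equivariant, and (iii) $\Psi$ induces the identity on degree zero homology.

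Items (i) and (ii) are essentially built in. By construction $\Psi = EZ \circ \Phi \circ AW$ is a composition of three chain maps: $AW$ and $EZ$ are the classical chain maps of \S1.1.5 and \S2.1.1, and $\Phi = (\Id \otimes \tawd) \circ (\awd \otimes \Id)$ is a composition of chain maps since $\awd$ and $\tawd$ are chain maps (the latter by Steenrod's theorem recalled in \S2.1.2). For equivariance, Theorem~\ref{t:3.1} gives that $\Phi$ is equivariant with respect to the embedding $\Sigma_2 \times \Sigma_2 \to D_8$ sending the factor generators to $a$ and $bc$. The Alexander--Whitney map $AW \colon N_*(E(\Sigma_2 \times \Sigma_2)) \to N_*(E\Sigma_2) \otimes N_*(E\Sigma_2)$ is $(\Sigma_2 \times \Sigma_2)$-equivariant (\S3.1.4), and $EZ$ is $D_8$-equivariant under the identification $ED_8 \simeq E\Sigma_2 \times E\Sigma_2 \times E\Sigma_2$ set up in \S3.1.3. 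Composing yields the desired $\iota$-equivariance of $\Psi$.

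For (iii) I would compute $\Psi$ on a single vertex $(g^1, g^2) \in E(\Sigma_2 \times \Sigma_2)_0$. Applying $AW$ gives the basis tensor $(g^1) \otimes (g^2)$; then $\awd \otimes \Id$ produces $(g^1) \otimes (g^1) \otimes (g^2)$ since $\awd$ is the identity on $0$-chains; and since in degree zero $\tawd(v \otimes u) = u \otimes u$ for any vertex $v$ (either directly from the $(\tawd)_0 = \awd$ identification in \S2.1.2, or via the equivariance formula $\tawd(T\tilde x_0 \otimes u) = S\,\tawd(\tilde x_0 \otimes u)$ together with the symmetry of $\awd$ on vertices), we obtain $(g^1) \otimes (g^2) \otimes (g^2)$. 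Applying $EZ$ returns the single $D_8$-vertex $(g^1, g^2, g^2) \in ED_8$, so $\Psi$ sends a vertex to a vertex and therefore induces the identity on $H_0$.

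Now apply Theorem~\ref{t:homotopy for iota and Psi} to obtain the $\iota$-equivariant chain homotopy
\begin{equation*}
J_\Psi(g_0, \ldots, g_n) = \sum_j \iota_*(g_0, \ldots, g_j) * \Psi(g_j, \ldots, g_n).
\end{equation*}
To pass to the second displayed expression, note that under the identification $ED_8 = E(\Sigma_2 \times \Sigma_2 \times \Sigma_2)$ of \S3.1.3, the join of two non-degenerate simplices in $ED_8$ is simply concatenation of vertex sequences, $(h_0, \ldots, h_j) * (k_0, \ldots, k_m) = (h_0, \ldots, h_j, k_0, \ldots, k_m)$, extended multilinearly. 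Writing $\Psi(g_j, \ldots, g_n)$ as a linear combination of such simplices in $ED_8$ and appending each to the prefix $(g_0, \ldots, g_j)$ then yields the compact notation $\sum_j (g_0, \ldots, g_j, \Psi(g_j, \ldots, g_n))$ in the statement. The only nontrivial step in this outline is the $H_0$ check, but as shown above it reduces to a one-line computation.
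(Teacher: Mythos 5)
Your proposal is correct and follows essentially the same route as the paper: the paper likewise deduces Theorem~\ref{t:3.4} from Theorems~\ref{t:homotopy for classifying spaces} and \ref{t:homotopy for iota and Psi}, using the $\iota$-equivariance of $\Psi = EZ\circ\Phi\circ AW$ (via Theorem~\ref{t:3.1} and the equivariance of $AW$ and $EZ$) together with the observation that in degree $0$ the map $\Psi$ is just the inclusion $\FF_2[\Sigma_2\times\Sigma_2]\to\FF_2[D_8]$, hence the identity on $H_0$. Your explicit vertex computation $(g^1,g^2)\mapsto(g^1,g^2,g^2)$ simply spells out that degree-$0$ claim, and your remark that the join in $ED_8$ is concatenation of vertex sequences correctly justifies the second displayed expression.
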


\begin{proof}
	The statement is immediate from the equivariance we have established for $\Psi = EZ\circ\Phi\circ AW$ and Theorems~\ref{t:homotopy for classifying spaces} and \ref{t:homotopy for iota and Psi} in \S1.3.1 and \S1.3.2.
	In degree 0, the map $\Psi$ is just the inclusion $\FF_2[\g1_2 \times \g1_2] \to \FF_2[D_8]$, hence the induced map on $H_0$ is the identity, so Theorems~\ref{t:homotopy for classifying spaces} and \ref{t:homotopy for iota and Psi} do apply.
\end{proof}

\paragraph{3.4.3} Now we set
\begin{align*}
\tilde x_q \times \tilde x_p =EZ(\widetilde x_q\otimes \widetilde x_p)\in N_*(E\Sigma_2\times E\Sigma_2)  \nonumber \\
x_q \times x_p = EZ(x_q \otimes x_p) \in N_*(B\g1_2 \times B\g1_2).
\end{align*}
Since $AW \circ EZ = \Id$, we see that $AW(\tilde x_q \times \tilde x_p)=\widetilde x_q\otimes \widetilde x_p$, and hence
\begin{equation*}
\Psi(\tilde x_q \times \tilde x_p) = EZ(\Phi(\widetilde x_q\otimes \widetilde x_p)).
\end{equation*}

An immediate consequence of Theorem~\ref{t:3.4} is the following:

\begin{cor} \label{cor3.5}
	$$(\partial \circ J_\Psi+ J_\Psi \circ \partial)(\tilde x_q \times \tilde x_p) = EZ(\Phi(\widetilde x_q\otimes \widetilde x_p))+ (\tilde x_q \times \tilde x_p) \in N_*(ED_8).$$
	By symmetry we have
	$$(\partial \circ J_\Psi+ J_\Psi \circ \partial)(\tilde x_p \times \tilde x_q) = EZ(\Phi(\widetilde x_p\otimes \widetilde x_q))+ (\tilde x_p \times \tilde x_q) \in N_*(ED_8).$$
\end{cor}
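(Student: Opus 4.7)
The plan is to read off the corollary directly from Theorem~\ref{t:3.4}, which states that $J_\Psi$ is an $\iota$-equivariant chain homotopy between $\Psi$ and $\iota_*$. Writing this out with $\FF_2$ coefficients gives the identity of chain maps
\begin{equation*}
\partial \circ J_\Psi + J_\Psi \circ \partial \ = \ \Psi + \iota_*
\end{equation*}
on $N_*(E(\g1_2 \times \g1_2))$. So the task reduces to evaluating both summands on the element $\tilde x_q \times \tilde x_p = EZ(\tilde x_q \otimes \tilde x_p)$.

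First I would compute $\iota_*(\tilde x_q \times \tilde x_p)$. Since $\iota_*$ is induced by the simplicial inclusion $E(\g1_2 \times \g1_2) \hookrightarrow ED_8$, this is simply $\tilde x_q \times \tilde x_p$ viewed as a chain in $N_*(ED_8)$, consistent with the convention of \S0.2.2 that an element named in a smaller complex is called by the same name in any larger one.

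Second I would compute $\Psi(\tilde x_q \times \tilde x_p)$ by unwinding the definition $\Psi = EZ \circ \Phi \circ AW$ from \S3.4.2. The key identity is that $AW \circ EZ = \Id$ on tensor products of normalized chains, so
\begin{equation*}
AW(\tilde x_q \times \tilde x_p)\ =\ AW(EZ(\tilde x_q \otimes \tilde x_p))\ =\ \tilde x_q \otimes \tilde x_p,
\end{equation*}
and therefore $\Psi(\tilde x_q \times \tilde x_p) = EZ(\Phi(\tilde x_q \otimes \tilde x_p))$, giving the first displayed equation of the corollary after adding the two contributions.

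The symmetric statement with the roles of $p$ and $q$ interchanged is obtained by the same argument applied to $\tilde x_p \times \tilde x_p$ in place of $\tilde x_q \times \tilde x_p$; no new ingredient is needed, since Theorem~\ref{t:3.4} gives the chain homotopy identity for every input. There is no real obstacle here: the only point that must be verified to invoke Theorem~\ref{t:3.4} is the hypothesis that $\Psi$ induces the identity on $H_0$, and this is recorded in the proof of Theorem~\ref{t:3.4} itself (in degree zero, $\Psi$ is the inclusion $\FF_2[\g1_2 \times \g1_2] \hookrightarrow \FF_2[D_8]$). The statement of the corollary is therefore truly immediate once the notation $\tilde x_q \times \tilde x_p = EZ(\tilde x_q \otimes \tilde x_p)$ and the identity $AW \circ EZ = \Id$ are recalled.
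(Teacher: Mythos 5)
Your proposal is correct and takes essentially the same route as the paper: Corollary~\ref{cor3.5} is read off from the chain homotopy identity of Theorem~\ref{t:3.4} combined with $AW \circ EZ = \Id$, which yields $\Psi(\tilde x_q \times \tilde x_p) = EZ(\Phi(\tilde x_q \otimes \tilde x_p))$ while $\iota_*$ leaves $\tilde x_q \times \tilde x_p$ unchanged. (One small slip: in the symmetric case the input should be $\tilde x_p \times \tilde x_q$, not $\tilde x_p \times \tilde x_p$.)
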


Let $\pi_*\colon N_*(EG)\to N_*(BG)$ be the map induced by the projection to the quotient for $G = V_4 \simeq \Sigma_2 \times \Sigma_2$ and $G = D_8$.
The $V_4$-equivariant homotopy $J_\Psi \colon N_*(EV_4) \to N_{*+1}(ED_8)$ yields a commutative diagram
\begin{equation}
\begin{tikzcd}
N_*(EV_4) \arrow[r, "J_\Psi"] \arrow[d, "\pi_*"] & N_{*+1}(ED_8) \arrow[d, "\pi_*"] \\
N_*(BV_4) \arrow[r, "J_\Psi"] & N_{*+1}(BD_8).\\
\end{tikzcd}
\end{equation}

\begin{cor} \label{cor3.6}
	With
	$\bar{J}_\Psi(x_q \times x_p) = \pi_*J_\Psi(\tilde x_q \times \tilde x_p)$ we have 
	$$\partial \bar{J}_\Psi(x_q \times x_p) = EZ(\bar \Phi(x_q \otimes x_p)) + x_q \times x_p \in N_*(BD_8).$$
	By symmetry we have
	$$\partial \bar{J}_\Psi(x_p \times x_q) = EZ(\bar \Phi(x_p \otimes x_q)) + x_p \times x_q \in N_*(BD_8).$$
\end{cor}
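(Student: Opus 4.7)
The plan is to deduce the identity from Corollary~\ref{cor3.5} by applying the projection $\pi_* \colon N_*(ED_8) \to N_*(BD_8)$ and simplifying using that $x_q \times x_p$ is a cycle. Explicitly, since $\pi_*$ commutes with $\partial$,
$$\partial \bar{J}_\Psi(x_q \times x_p) \;=\; \partial \pi_* J_\Psi(\tilde x_q \times \tilde x_p) \;=\; \pi_*\,\partial J_\Psi(\tilde x_q \times \tilde x_p),$$
and Corollary~\ref{cor3.5} rewrites $\partial J_\Psi(\tilde x_q \times \tilde x_p)$ as $EZ(\Phi(\tilde x_q \otimes \tilde x_p)) + (\tilde x_q \times \tilde x_p) + J_\Psi\,\partial(\tilde x_q \times \tilde x_p)$. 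Applying $\pi_*$ term-by-term, three summands must be analysed.

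For the summand $\pi_* J_\Psi\,\partial(\tilde x_q \times \tilde x_p)$, I would invoke the commutative diagram (3.1) to exchange the order of $\pi_*$ and $J_\Psi$, obtaining $\bar J_\Psi\,\partial\,\pi_*(\tilde x_q \times \tilde x_p) = \bar J_\Psi\,\partial(x_q \times x_p)$; this vanishes because $x_p$ and $x_q$ are cycles in $N_*(B\Sigma_2)$ and $EZ$ is a chain map, so $x_q \times x_p = EZ(x_q \otimes x_p)$ is a cycle. For the summand $\pi_*(\tilde x_q \times \tilde x_p)$, the naturality $\pi_* \circ EZ = EZ \circ (\pi_* \otimes \pi_*)$ identifies it with $EZ(x_q \otimes x_p) = x_q \times x_p$. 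The remaining, main summand $\pi_* EZ(\Phi(\tilde x_q \otimes \tilde x_p))$ I would identify with $EZ(\bar\Phi(x_q \otimes x_p))$ by invoking the $D_8$-equivariance of $EZ \colon N_*(E\Sigma_2)^{\otimes 3} \to N_*(E\Sigma_2 \times E\Sigma_2 \times E\Sigma_2) = N_*(ED_8)$ proved in \S3.1.2: this allows $EZ$ to descend to the $D_8$-coinvariants, where by construction $\Phi$ descends to $\bar\Phi$, producing the claimed equality.

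Assembling these three identifications yields $\partial \bar J_\Psi(x_q \times x_p) = EZ(\bar\Phi(x_q \otimes x_p)) + x_q \times x_p$, and the symmetric statement follows by exchanging the roles of $p$ and $q$ throughout. The step most likely to require careful bookkeeping is the compatibility of $EZ$ with the $D_8$-coinvariant projection, because the $D_8$ action on $N_*(E\Sigma_2)^{\otimes 3}$ mixes the three tensor factors via the formulae of \S3.1.1 rather than acting factor-wise; however, this is precisely the content of \S3.1.2 and so is not a genuine obstacle, only a routine diagram chase.
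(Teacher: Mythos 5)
Your argument is correct and is essentially the paper's own proof: apply $\pi_*$ to Corollary~\ref{cor3.5}, kill the $J_\Psi\,\partial$ term using diagram (3.1) together with $\partial(x_q\times x_p)=0$, and identify the remaining terms via naturality/equivariance of $EZ$ and the descent of $\Phi$ to $\bar\Phi$. You have merely spelled out the coinvariance bookkeeping that the paper's one-line proof leaves implicit; there is no substantive difference in approach.
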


\begin{proof}
	Follows from Corollary~\ref{cor3.6} since $\pi_* \partial(\tilde x_q \times \tilde x_p) = \partial (x_q \times x_p) = 0.$
\end{proof}

\subsection{A Brief Summary}

\paragraph{3.5.1} In \S1.3.3 we gave the general formula for a chain homotopy associated to an inner automorphism of a group.
Specializing to $\Sigma_4$, and referring to the discussion in \S0.2.5, we get $\partial \bar{J}_{(23)}(x_q \times x_p) = x_p \times x_q + x_q \times x_p \in N_*(B\Sigma_4)$.\\

Combining with Corollary~\ref{cor3.6} just above, we then have an equality
\begin{equation} \label{e:3.6}
\partial \bar{J}_\Psi(x_q \times x_p) + \partial \bar{J}_\Psi(x_p \times x_q) + \partial \bar{J}_{(23)}(x_q \times x_p)\ = \
EZ(\bar \Phi(x_q \otimes x_p)) + EZ(\bar \Phi(x_p \otimes x_q)) \in N_*(B\Sigma_4).
\end{equation}
Four of the five terms here are actually in $N_*(BD_8)$.
But recall the point made in \S0.2.2 that we use the same names in $N_*(BD_8) \subset N_*(B\Sigma_4)$.\\

To complete the proofs of Theorems \ref{Adem1} and \ref{Adem2}, which were stated in \S0.2.3 and \S0.2.5 and which the reader should review, we will explain how to `evaluate' every term in Formula \eqref{e:3.6} on a cocycle $\alpha \in N^{-n}(X)$, using an action of elements in $N_*(B\Sigma_4)$ on symmetric tensors of the form $\alpha \otimes \alpha \otimes \alpha \otimes \alpha$.
We will explain such an action in Section 4 in terms of operads.
The sum of the five evaluations of the terms in Formula \eqref{e:3.6} will be 0, which will be seen to be equivalent to Theorem~\ref{Adem2}.
The proof of Theorem~\ref{Adem1} will be a more direct application of Corollary~\ref{cor3.6}.

\paragraph{3.5.2} It turns out that the operad method allows us to suppress the $EZ$ step in the two terms on the right-hand side of Formula \eqref{e:3.6} above, and directly apply $\bar \Phi(x_q \otimes x_p)$ and $\bar \Phi(x_p \otimes x_q)$, which lie in $\bigl( N_*(E\Sigma_2)\otimes N_*(B\Sigma_2)\otimes N_*(B\Sigma_2)\bigr)$, to a symmetric tensor.
These two terms were computed in Corollary~\ref{cor3.2} in \S3.3.2, as sums of triple tensors.
The direct evaluation of $\tilde{x}_r \otimes x_s \otimes x_t$ on a symmetric tensor will follow from Lemma~\ref{l:4.1} in \S4.3.4.
The result is
\begin{equation*}
(\tilde{x}_r \otimes x_s \otimes x_t)(\alpha^{\otimes 4}) = (\alpha \smallsmile_s \alpha) \smallsmile_r (\alpha \smallsmile_t \alpha) = Sq^{n-s}(\alpha) \smallsmile_r Sq^{n-t}(\alpha),
\end{equation*} 
where $deg(\alpha) = -n$.
Note that if $s = t$ then $Sq^{n-s}(\alpha) \smallsmile_r Sq^{n-s}(\alpha) = Sq^{2n-s-r}Sq^{n-s}(\alpha)$.\\

Bringing in the formulae of Corollary~\ref{cor3.3}, which are explicit sums of triple tensors with various coefficients and subscripts (which the reader should review and compare with Theorem~\ref{Adem1}), we find that Corollary 3.6 and Lemma~\ref{l:4.1} imply the formulae stated originally in the introduction as Theorem~\ref{Adem1}.
Specifically, from Corollary~\ref{cor3.3}, the evaluation of $\bar{\Phi}(x_q \otimes x_p)$ on a symmetric tensor $\alpha^{\otimes 4}$ consists of evaluating the symmetric part $\widehat{S}_{q,p}$ of $\bar{\Phi}(x_q \otimes x_p)$, which yields by Lemma~\ref{l:4.1} the sum of iterated Steenrod Squares appearing in Theorem~\ref{Adem1}, and evaluating the non-symmetric part $\partial \widehat{NS}_{q,p}$, which yields the coboundary of the sum of $\smallsmile_r$ products of Squares in Theorem~\ref{Adem1}.
Similarly for $\bar{\Phi}(x_p \otimes x_q)$.
Thus, evaluating the terms in Corollary~\ref{cor3.6} on $\alpha^{\otimes 4}$ completes the proof of Theorem~\ref{Adem1}.

\paragraph{3.5.3} The meaning of the evaluation of the three boundary terms on the left-hand side of Formula \eqref{e:3.6} on a symmetric tensor $\alpha^{\otimes 4}$, producing three coboundary terms in $N^*(X)$, will be explained in \S4.4.4.
Then the vanishing of the sum of the five evaluations of the terms in \eqref{e:3.6}, combined with the discussion of Theorem~\ref{Adem1} in \S3.5.2 just above, very easily translates to the statement of Theorem~\ref{Adem2}.

\paragraph{3.5.4} In a strong sense, our main theorem is really Formula \eqref{e:3.6} in \S3.5.1.
For each pair $q > p$, Formula \eqref{e:3.6} can be rewritten as a relation $R(q,p) = 0 \in N_{q+p}(B\Sigma_4)$, where $R(q,p)$ is a sum of five terms.
Explicit formulae for all five of these terms have been given at various points in our paper.
In fact, before applying equivariance to the chain homotopies $J_\Psi$ and $J_{(23)}$, we actually gave formulae for all five terms in the sum 
\begin{align} 
\tilde{R}(q,p)
& = \partial J_\Psi(\tilde x_q \times \tilde x_p) + \partial J_\Psi(\tilde x_p \times \tilde x_q) + \partial J_{(23)}(\tilde x_q \times \tilde x_p) \nonumber \\
& + EZ(\Phi(\widetilde x_q\otimes \widetilde x_p)) + EZ(\Phi(\widetilde x_p\otimes \widetilde x_q)) \in N_{q+p}(E\Sigma_4).
\end{align}

The sum $\tilde{R}(q,p) \in N_{q+p}(E\Sigma_4)$ projects to the sum $R(q,p) \in N_{q+p}(B\Sigma_4)$, and in the operad method to be described in Section 4 it is actually $\tilde{R}(q,p)$ that is directly evaluated on symmetric tensors $\alpha^{\otimes 4}$, with the result depending only on $R(q,p)$.
Conceptually it is not so difficult to write a computer program for calculating the $\tilde{R}(q,p) \in N_{q+p}(E\Sigma_4)$, and we have done so, but the output gets extremely large, even for $q+p \approx 10$.
A positive way to look at this is that each $R(q,p)$ corresponds to {\it infinitely many} different (unstable) Adem relations, obtained as in Theorem~\ref{Adem2} by evaluating $R(q,p)$ on a symmetric tensor $\alpha^{\otimes 4}$, where the cocycle $\alpha$ can have any degree.

\section{Operads} \label{sect4}
At the beginning of this section we introduce some basic terminology about operads.
We describe the symmetric group operad in the category of sets and the endomorphism operads in the category of chain complexes as examples.
In Subsections 4.2 and 4.3 we introduce the Surj operad, $\cS = \{\cS_n\}_{n \geq 1}$, also an operad of chain complexes, and describe how each chain complex $\cS_n$ acts on $n$-fold tensors of cochains on spaces $X$.
The action of $\cS_2$ coincides with Steenrod's two-variable $\smallsmile_n$ products.
Then we explain in Lemma~\ref{l:4.1} how the operad structure exhibits iterations of $\smallsmile_n$ products as part of the action of $\cS_4$.\\

In Subsection 4.4 we introduce the Barratt-Eccles operad $\cE$ with $\cE_n = N_*(E\Sigma_n)$.
The operad $\cE$ acts on multi-tensors of cochains via an operad morphism $TR\colon \cE \to \cS$.
The operad morphism $TR$ is very important in our paper because our key chain homotopies all have values in $\cE$, and we need to push those chain homotopies to $\cS$ to obtain explicit cochain operations and complete the proofs of the main theorems.

\subsection{Introduction} \label{sect4.1}

\paragraph{4.1.1} Recall that a {\em symmetric operad}, \cite{16mayoperads}, $\cP$ in a monoidal category is a collection of objects
$\cP = \{{\mathcal P}_n\}_{n\ge 1}$ of the category together with structure maps
$${\mathcal P}_r\times ({\mathcal P_{s_1}\times\cdots\times{\mathcal P}_{s_r}}) \to {\mathcal P}_{s_1+\cdots+s_r},$$
satisfying composition and symmetry rules for $\Sigma_n$ actions on $\cP_n$, and a unit rule for $\cP_1$.

\paragraph{4.1.2} Let us define the operad in the category of sets determined by the symmetric groups.
We denote an element $\sigma \in \Sigma_n$ by the sequence $(\sigma(1)\ldots \sigma(n))$,
which is the reordering of $(1\ldots n)$ given by applying the permutation.\footnote{For the remainder of the paper we will write permutations in this form, rather than as products of disjoint cycles.}\\
 
The symmetric groups $\{\Sigma_n\}_{n\ge 1}$ form an operad where the operad structure
$$\Sigma_r\times(\Sigma_{s_1}\times \cdots \times \Sigma_{s_r})\to \Sigma_{s_1+\cdots+s_r}$$
is given by dividing the interval $[1,s_1+\ldots+s_r]$ into disjoint blocks of lengths $s_1, \ldots , s_r$, starting from the left, and then first permuting the elements of the $i^{th}$ block among themselves by applying the  conjugation of the element of $\Sigma_{s_i}$ by the unique order preserving map from the $i^{th}$ block to $\{1, \ldots ,s_i\}$.
This produces an automorphism of each block.
The blocks with their new internal orderings are then permuted among themselves, according to the element of $\Sigma_r$.\footnote{One can also first permute the $r$ blocks using $\sigma \in \Sigma_r$, keeping the entries of each original block in their consecutive order. Then apply the $\sigma_i \in \g1_{s_i}$.
To keep straight how each permutation is applied, it helps to pretend that $r, s_1, \ldots, s_r$ are distinct integers.} 
 
\paragraph{4.1.3} As a simple example, consider the operad structure map $\Sigma_2\times (\Sigma_2\times \Sigma_2) \to \g1_4$.
We have the embedding $D_8 \subset \g1_4$, with $a \mapsto (3412),\ b \mapsto (2134),\ c \mapsto (1243)$.
It is easy to see from the block description of the operad structure map that the triple $(T^{\epsilon_1}, (T^{\epsilon_2}, T^{\epsilon_3})) \in \Sigma_2\times (\Sigma_2\times \Sigma_2)$ maps to the permutation $c^{\epsilon_3}b^{\epsilon_2}a^{\epsilon_1} \in D_8 \subset \g1_4$.
Here, $\epsilon_i \in \{0,1\}$.\\

Thus, the operad structure map is a bijection $\Sigma_2\times (\Sigma_2\times \Sigma_2) \simeq D_8$, and, moreover, this bijection coincides with the bijection $D_8 \simeq \Sigma_2\times (\Sigma_2\times \Sigma_2)$ studied in \S3.1.3.
The induced product on the triples $(T^{\epsilon_1}, (T^{\epsilon_2}, T^{\epsilon_3})) \in \Sigma_2\times (\Sigma_2\times \Sigma_2)$ thus also coincides with the product on triples from \S3.1.3.

\paragraph{4.1.4} For a vector space or a chain complex $V$ there is the Endomorphism operad, with
$$End(V)_n={\ \rm Hom}(V^{\otimes n}, V),$$
with the obvious operad structure and action of the symmetric groups.
To give $V$ the structure of an {\em algebra} over a symmetric operad ${\mathcal P}$ is to give a map of symmetric operads ${\mathcal P}\to End(V)$.

\subsection{The Surj Operad $\cS$}

In this subsection and the next we follow McClure and Smith \cite{18mccluresmith}.

\paragraph{4.2.1} We view a function $ \{1, \ldots , r+d\}\to \{1, \ldots ,r\}$ as a sequence of integers $A = (a(1) \ldots a(r+d))$, each in the interval $[1,r]$.
Fix $r$ and consider the $\mathbb{F}_2$-vector space with basis the set of all maps $\{1, \ldots ,r+d\}\to \{1, \ldots ,r\}$.
We form the quotient vector space, denoted $\cS_r(d)$, by setting equal to zero all sequences $(a(1) \ldots a(r+d))$ that are either not surjective functions or have
$a(i)=a(i+1)$ for some $i < r+d$.
We define a chain complex structure on $\cS_r$.
The boundary of a basis element of $\cS_r(d)$ is
$$\partial \big(a(1) \ldots a(r+d)) = \sum_i(a(1) \ldots \widehat{a(i)} \ldots a(r+d)\big) \in \cS_r(d-1).$$
Of course, this means that any terms in the sum that are either non-surjective functions or
have the property that two successive entries are equal are set to zero.
One sees easily that this defines a chain complex denoted $\cS_r$.

\paragraph{4.2.2} There is the obvious left action of $\Sigma_r$ on $\cS_r$ given by post-composition of a function with a permutation of $\{1, \ldots ,r\}$.
The operad structure on $\cS$, which we will not define explicitly, is compatible with these actions.
This means that $\cS$ is a {\em symmetric operad}.
For each $r\ge 1$, the action of $\Sigma_r$ on $\cS_r$ is a free action and it turns out, \cite{18mccluresmith}, that $\cS_r$ is an acyclic resolution of $\mathbb{F}_2$ over $\mathbb{F}_2[\Sigma_r]$.
Hence, the homology of the coinvariant complex $(\cS_r)_{\Sigma_r}$ is identified with $H_*(B\Sigma_r)$.
 
\paragraph{4.2.3} The basis of $\cS_2(n)$ consisting of alternating sequences of 1's and 2's of length $n+2$ matches the basis of $N_n(E\g1_2)$ consisting of alternating sequences of 1's and $T$'s of length $n+1$, by dropping the last entry of a $1, 2 $ sequence.
The $\g1_2$ actions and the boundary formulae also coincide.
Thus we can identify $\cS_2$ and $N_*(E\g1_2)$.
The full operad operations in the Surj operad $\cS$ are quite complicated.
However, on degree zero chains, $ \cS_2 \otimes (\cS_2 \otimes \cS_2) \to \cS_4$ coincides with the map $\FF_2[D_8] \to \FF_2[\g1_4]$ induced by the dihedral group inclusion $D_8 \to \g1_4$ for the symmetric group operad, as described in \S4.1.3.
In all degrees, the axioms for permutation group actions on symmetric operads imply that $$N_*(E\g1_2) \otimes (N_*(E\g1_2) \otimes N_*(E\g1_2)) = \cS_2 \otimes (\cS_2 \otimes \cS_2) \to \cS_4$$ is equivariant for the inclusion $D_8 \to \g1_4$.
 
\subsection{Action of the Surj Operad on Normalized Cochains}

\paragraph{4.3.1} The normalized cochain complex of a simplicial set $X$, $ N^*(X)$, is an algebra over the Surj operad.
That is to say there are chain maps that we will describe in the next subsection
$${\mathcal O}_X\colon \cS_r\otimes \underbrace{ N^*(X)\otimes\cdots\otimes N^*(X)}_{r-{\ \rm times}}\to N^*(X).$$ 
This means that (i) the degree of ${\mathcal O}_X(\zeta\otimes\alpha_1\otimes \cdots\otimes \alpha_r)$
is the sum of the degrees of the $\alpha_i$ plus the degree of $\zeta$, and (ii)
\begin{align*}
d({\mathcal O}_X(\zeta\otimes\alpha_1\otimes\cdots\otimes\alpha_r))=
{\mathcal O}_X \Big\{ \partial(\zeta) \otimes (\alpha_1 \otimes \cdots \otimes \alpha_r) + \sum_i \zeta \otimes (\alpha_1 \otimes \cdots \otimes d\alpha_i \otimes \cdots \otimes \alpha_r) \Big\}.
\end{align*}
Also, the $\mathcal{O}_X$ satisfy certain associativity and equivariance properties.\\

Recall our convention that $N^*(X)$ is negatively graded.
This means that the operation $\cO_X(\zeta)$ decreases the sum of the absolute values of the degrees of the $\alpha_i$ by deg$(\zeta)$.\\

The operations $\cO_X$, natural in $X$, implicitly determine the operad structure maps for $\cS$.
In fact, any finite part of $\cS$ injects into $End(N^*(\Delta^k))$ for large $k$.

\paragraph{4.3.2} We will describe the action of a function $ A = (a(1)\ldots a(n+r)) \in \cS_r(n)$ on multi-tensors of cochains.
For each $k$, we consider a set of $r$-step diagrams, ${\rm Diagrams}(A, k)$, associated to certain collections of $n+r$ subintervals of $[0,k]$.
By this we mean a division of $[0,k]$ into non-empty intervals $I_1,I_2, \ldots ,I_{n+r}$ so that the final point of each interval agrees with the initial point of the next interval.
The interval $I_j$ is said to be at level $a(j)$.
There is one extra condition, which is that for any $j\not= j'$, if the intervals $I_j$ and $I_{j'}$ are at the same level, then they are disjoint.
For a given level $1 \leq \ell \leq r$, set $I(\ell) = \bigsqcup_{a(j) = \ell} I_j $, which we interpret as a face of the simplex $\Delta^k$.\\

The generator $A = (a(1) \dots a(n+r)) \in \cS_r(n)$ acts in the following manner.
Let $\alpha_1 \otimes \cdots \otimes \alpha_r$ be a multi-tensor of cochains of total degree $-(n + k)$.
The operad algebra structure $\cO_X$ will assign to $A \otimes (\alpha_1 \otimes \cdots \otimes \alpha_r)$ a cochain of degree $-k$.
Let $u \in N_k(X)$ be a simplex of dimension $k$, regarded as a simplicial map $u \colon \Delta^k \to X$.
Then
\begin{equation*}
\left \langle \cO_X(A \otimes (\alpha_1 \otimes \cdots \otimes \alpha_r)), \ u \right \rangle \ = \!
\sum_{{\rm Diagrams}(A, k)} \prod_\ell \ \langle \alpha_\ell, \ u(I(\ell)) \rangle.
\end{equation*}

With this definition the element $ (1212 \ldots)$ in $\cS_2(n)$ acts by 
$$\alpha\otimes\beta\mapsto \alpha\smallsmile_n\beta.\footnote{We defined $\smallsmile_n$ as a sum over two-step diagrams in \S\S 2.2.3, 2.2.4, using the terminology {\it odd} and {\it even} for levels 1 and 2.}$$

\paragraph{4.3.3} The coinvariant chain complex of the $\Sigma_r$ action on $\cS_r$ acts on $\Sigma_r$-symmetric cochains in $N^*(X)^ {\otimes r} $ as follows.
Let $\zeta \in \cS_r(n)_{\Sigma_r}$.
Lift $\zeta$ to an element $\widetilde\zeta \in \cS_r(n)$.
The element $\widetilde\zeta$ determines a map
$$ N^*(X)^{\otimes r}\to N^*(X)$$
raising degree by $n$.\footnote{Again recall cochains are negatively graded.}
Another lift will give a different map, but elements in the same $\g1_r$ orbit have the same restriction to the $\Sigma_r$-invariant elements in $ N^*(X)^{\otimes r}$.\\

Thus, we have a well-defined chain map
$$({\mathcal O}_X)_{\Sigma_r}\colon (\cS_r)_{\Sigma_r}\otimes \bigl(\underbrace{ N^*(X)\otimes\cdots\otimes N^*(X)}_{r-{\ \rm times}}\bigr)^{\Sigma_r}\to N^*(X).$$

Restricting even further to symmetric cocycles of the form $\alpha\otimes \alpha\otimes\cdots\otimes \alpha$, 
a cycle in $\cS_r(n)_{\Sigma_r}$ determines a map
from cocycles of degree $k \leq 0$ to cocycles of degree $rk+n$.
The induced operation on cohomology depends only on the homology class of $\zeta$.
In this way $H_n(B\Sigma_r)$ acts as cohomology operations
$$H^k(X)\to H^{rk+n}(X).$$

In the special case when $r=2$ the operation associated to
$$[(\underbrace{1212\ldots}_{(n+2)})] \equiv [(\underbrace{1T1T\ldots}_{(n+1)})] = x_n \in N_n(B\Sigma_2)$$
sends a symmetric cochain $(\alpha\otimes \alpha)\in N^k(X)\otimes N^k(X)$ to $\alpha\smallsmile_n \alpha \in N^{2k+n}(X)$.
In the case when $\alpha$ is a cocycle, the result of this operation is a cocycle representing the cohomology class
$Sq^{|k|-n}([\alpha])$.
 
\paragraph{4.3.4} In particular, we are interested in the operad product
$$\cS_2\otimes (\cS_2\otimes \cS_2)\to \cS_4.$$
This is a map of an acyclic complex with a free action of the dihedral group $D_8$ to an acyclic complex with a free $\Sigma_4$-action.
Of course, $\cS_2$ is identified with $ N_*(E\Sigma_2)$.
This allows us to consider $$\Phi(\widetilde x_q\otimes \widetilde x_p) \in N_*(E\g1_2) \otimes (N_*(E\g1_2) \otimes N_*(E\g1_2))$$ as an element of the domain of this operad product, where the map $\Phi$ was defined in \S3.2.1.
From  \S4.2.3, the operad product is equivariant with respect to the inclusion of $D_8 \subset \Sigma_4$ that sends the elements $b$ and $c$ in the dihedral group to the permutations $(2134)$ and $(1243)$ respectively, and sends the element $a$ in the dihedral group to $(3412)$.
The following is an important observation.

\begin{lem} \label{l:4.1}
	Under the operad composition $\cS_2\otimes (\cS_2\otimes \cS_2) \to \cS_4$, the element
	$\widetilde x_r \otimes (\widetilde x_s\otimes \widetilde x_t)$ maps to an element of $\cS_4$ that acts on a cochain $\alpha_1\otimes\alpha_2\otimes\alpha_3\otimes\alpha_4$ to produce
	$$(\alpha_1\smallsmile_s\alpha_2)\smallsmile_r(\alpha_3\smallsmile_t\alpha_4).$$
\end{lem}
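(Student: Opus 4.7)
The plan is to deduce this lemma directly from the fact that $\cO_X \colon \cS \to End(N^*(X))$ is a morphism of symmetric operads, as declared in \S4.3.1. By the defining property of an operad morphism, $\cO_X$ intertwines the two operad compositions: for any $\zeta \in \cS_r$ and $\eta_i \in \cS_{s_i}$,
$$\cO_X\big(\zeta(\eta_1, \ldots, \eta_r)\big)\ =\ \cO_X(\zeta) \circ \big(\cO_X(\eta_1) \otimes \cdots \otimes \cO_X(\eta_r)\big),$$
where the right side means: apply each $\eta_i$ to the $i$-th block of $s_i$ cochain arguments, then feed the resulting $r$ cochains into $\zeta$.

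Specializing to $\zeta = \widetilde x_r$ and $\eta_1 = \widetilde x_s$, $\eta_2 = \widetilde x_t$, all viewed in $\cS_2$ via the identification of \S4.2.3 (under which $\widetilde x_n$ corresponds to the alternating sequence $(1\,2\,1\,2\,\cdots)$ of length $n+2$), the description in \S4.3.2 shows that each such $\widetilde x_n$ acts on a pair of cochains precisely as Steenrod's $\smallsmile_n$. Substituting into the displayed formula yields
$$\cO_X\big(\widetilde x_r(\widetilde x_s, \widetilde x_t)\big)(\alpha_1 \otimes \alpha_2 \otimes \alpha_3 \otimes \alpha_4)\ =\ (\alpha_1 \smallsmile_s \alpha_2)\smallsmile_r (\alpha_3 \smallsmile_t \alpha_4),$$
which is the claim.

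The main point to verify — not so much an obstacle as a consistency check — is that the identification $\cS_2 \simeq N_*(E\Sigma_2)$ of \S4.2.3 really is compatible with the operad structure underlying $\cO_X$, so that under the composition $\cS_2 \otimes (\cS_2 \otimes \cS_2) \to \cS_4$ the element $\widetilde x_r \otimes (\widetilde x_s \otimes \widetilde x_t)$ truly maps to a surjection in $\cS_4(r+s+t)$ whose sum-over-diagrams evaluation on a $4$-tensor is the nested expression above. In principle one could make this entirely explicit by writing out the composed surjection via the McClure–Smith substitution rule and matching its diagrams against those of $(\alpha_1 \smallsmile_s \alpha_2)\smallsmile_r (\alpha_3 \smallsmile_t \alpha_4)$; but the operad-morphism property makes such a combinatorial computation unnecessary, and this is the route I would take.
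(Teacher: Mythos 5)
Your proposal is correct and follows essentially the same route as the paper: rather than computing the composed surjection in $\cS_4$ explicitly, the paper also invokes the operad morphism $\cS \to End(N^*(X))$, identifies $\widetilde x_n \in \cS_2$ with $\smallsmile_n$ via \S4.2.3, and reads off the nested composition $(\alpha_1 \smallsmile_s \alpha_2)\smallsmile_r(\alpha_3 \smallsmile_t \alpha_4)$ from the induced map on $N^*(X)^{\otimes 4}$.
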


\begin{proof}
	We will not prove this by directly passing through $\cS_4$, but rather by using the operad morphism $\cS \to End(N^*(X))$.
	We have the identification $\cS_2 \simeq N_*(E\g1_2)$ of \S4.2.3, with $\smallsmile_q = (12121...) \leftrightarrow \tilde x_q$.
	The operad morphism sends $\widetilde x_r \otimes (\widetilde x_s\otimes \widetilde x_t)$ to the endomorphism operad element $\smallsmile_r \otimes (\smallsmile_s \otimes \smallsmile_t)$.
	Acting on $N^*(X)^{\otimes 4}$, this gives the composition $$N^*(X)^{\otimes 4} = N^*(X)^{\otimes 2} \otimes N^*(X)^{\otimes 2} \xrightarrow{ \smallsmile_s \otimes \smallsmile_t} N^*(X) \otimes N^*(X) \xrightarrow{\smallsmile_r} N^*(X),$$ which is exactly the claim of the lemma.
\end{proof}
 
\subsection{The Barratt-Eccles Operad $\cE$}

Here we follow the presentation of Berger-Fresse \cite{3bergerfresse}.

\paragraph{4.4.1} The Barratt-Eccles operad is an operad in the category of chain complexes with ${\mathcal E}_r = N_*(E\Sigma_r)$.
The $\cE$ operad structure map $$\cE_r \otimes (\cE_{s_1} \otimes \cdots \otimes \cE_{s_r}) \to \cE_{s_1 + \cdots + s_r}$$ is a composition of the operad structure on symmetric groups and the Eilenberg-Zilber map $$EZ \colon N_*(E\g1_r) \otimes (N_*(E\g1_{s_1} )\otimes \cdots \otimes N_*(E\g1_{s_r})) \to N_*(E\g1_r \times (E\g1_{s_1} \times \cdots \times E\g1_{s_r})).$$ Specifically, the $\cE$ operad structure map is post-composition of $EZ$ with the map of normalized chain complexes induced by the set theoretic map $$\g1_r \times (\g1_{s_1} \times \cdots \times \g1_{s_r}) \to \g1_{s_1 + \cdots + s_r},$$ which is the structure map of the symmetric group operad described in \S4.1.2. \\

As is the case with the Surj operad, the Barratt-Eccles operad is a symmetric operad:
the natural actions of $\Sigma_r$ on ${\mathcal E}_r$ are compatible with the operad structures.
Furthermore, ${\mathcal E}_r$ is a free $\FF_2[\Sigma_r]$ resolution of $\mathbb{F}_2$.

\paragraph{4.4.2} 

Berger-Fresse define an operad morphism $TR \colon \cE \to \cS$, which they call {\em Table Reduction}.
Since Table Reduction is an operad map, for any simplicial set $X$ the normalized cochains
$N^*(X)$ form an algebra over the Barratt-Eccles operad.\\

For completeness in this paper, we include here a definition of the Table Reduction morphism.
Given a basis element $(\sigma_0, \dots, \sigma_n) \in \mathcal E_r(n)$ we define
\begin{equation*}
TR(\sigma_0, \dots, \sigma_n) = \sum_{a} s_{a} \in \cS_r(n)
\end{equation*}
as a sum of surjections
\begin{equation*}
s_{a} \colon \{1, \dots, n+r \} \to \{1, \dots, r\}
\end{equation*}
parametrized by all tuples of integers $a = (a_0, \dots, a_n)$ with each $a_i \geq 1$ and such that $a_0 + \cdots + a_n = n + r$.
For one such tuple $a$ we now describe its associated surjection $s_a$ as a sequence $s_a = (s_a(1), s_a(2), \ldots, s_a(n+r))$.\\

Consider the following ``table" representation of the generator of $\mathcal {E}_r(n)$:
\begin{center}
	\begin{tabular} {c | c c c}
		& $\sigma_0(1)$ & $\cdots$ & $\sigma_0(r)$ \phantom{.} \\
		& $\sigma_1(1)$ & $\cdots$ & $\sigma_1(r)$ \phantom{.} \\
		& $\vdots$ 	& $\ddots$ & $\vdots$ 	\\
		& $\sigma_n(1)$ & $\cdots$ & $\sigma_n(r)$ .
	\end{tabular}
\end{center}

Each line of the table is a permutation of $\{1,2, \ldots, r\}$.
The first $a_0$ entries of the sequence $s_a$ are the first $a_0$ entries of the permutation on the first line of the table.
The first $a_0 -1$ of these values, that is, all except the last one, are then removed from all lower permutation lines of the table.\\

The next $a_1$ entries in the sequence $s_a$ are the first $a_1$ entries remaining in the second line of the table.
The values of all except the last of these are then removed from all lower lines in the table.
The process forming the sequence $s_a$ continues in this way.
The final $a_n$ entries of the sequence will consist of all remaining entries on the last line of the table after the first $n$ steps of the process.\\

For more details on this map we refer the reader to the original treatment in \cite{3bergerfresse}.\\

The component ${\mathcal E}_2$ in the Barratt-Eccles operad is 
$ N_*(E\Sigma_2) = \cS_2$.
On these components of the operads, Table Reduction is the identification of ${\mathcal E}_2$ with $\cS_2$ given above.\\

We shall use the following commutative diagram
\begin{equation*}
\begin{tikzcd}
N_*(E\Sigma_2) \otimes (N_*(E\Sigma_2) \otimes N_*(E\Sigma_2)) \arrow[r] \arrow[d, "{TR \, \otimes (TR \, \otimes TR)}"] & N_*(E\Sigma_4) \arrow[d, "TR"]\\
\cS_2\otimes (\cS_2\otimes \cS_2 ) \arrow[r] & \cS_4,
\end{tikzcd}
\end{equation*}
where the horizontal arrows are the Barratt-Eccles and Surj operad structure maps.
These maps are equivariant with respect to the inclusion $D_8 \subset \g1_4$.\\

The Barratt-Eccles operad map
$$N_*(E\Sigma_2)\otimes ( N_*(E\Sigma_2)\otimes N_*(E\Sigma_2)) \to N_*(E\Sigma_4)$$
is the composition of 
$$EZ\colon N_*(E\Sigma_2)\otimes N_*(E\Sigma_2)\otimes N_*(E\Sigma_2) \to N_*(E(\Sigma_2\times\Sigma_2\times \Sigma_2))$$
with the map
$$N_*(E(\Sigma_2\times\Sigma_2\times \Sigma_2))\to N_*(E\Sigma_4)$$
induced by the group homomorphism $(T,1,1)\mapsto a = (3412);\ \ (1, T, 1)\mapsto b = (2134);\ \ (1, 1, T) \mapsto c = (1243)$, which defines our chosen group inclusion $D_8 \subset \Sigma_4$.\footnote{It is here that we use the identification of the group operation on triples with $D_8$ that we discussed in \S3.1.3 and \S4.1.3.}
 
\paragraph{4.4.3} Recall from \S3.3.3 that $\bar{\Phi}(x_q \otimes x_p) \in (\cS_2\otimes \cS_2\otimes \cS_2)_{D_8}$.
It is exactly the commutativity of the above diagram, and the fact that $EZ$ is part of the $\cE$ operad structure, that explains our remark in \S3.5.2 that we can evaluate $EZ \bar{\Phi}(x_q \otimes x_p) \in N_*(BD_8) \subset N_*(B\g1_4) = (\cE_4)_{\g1_4}$ on a cocycle $\alpha \in N^*(X)$ by directly evaluating $\bar{\Phi}(x_q \otimes x_p) \in (\cS_4)_{\g1_4}$ on $\alpha^{\otimes 4}$.
The actual formula in terms of Steenrod Squares and $\smallsmile_r$ products, then comes from Corollary~\ref{cor3.3} and Lemma~\ref{l:4.1}.
We have now completed one of the last steps in the proofs of the main Theorems \ref{Adem1} and \ref{Adem2}, as summarized in Subsection 3.5.
 
\paragraph{4.4.4} All that remains in the proofs of Theorems \ref{Adem1} and \ref{Adem2} from the summary in Subsection 3.5 is to explain how Table Reduction is used to evaluate the three boundary terms $$ \partial \bar{J}_\Psi(x_q \times x_p),\ \partial \bar{J}_\Psi(x_p \times x_q),\  \partial \bar{J}_{(23)}(x_q \times x_p) \in N_*(B\g1_4)$$ from Formula \eqref{e:3.6} in \S3.5.1 on a cocycle $\alpha.$ Since Table Reduction is a $\g1_4$-equivariant chain map, it induces a chain map of coinvariant complexes $\bar{TR}\colon N_*(B\g1_4) \to (\cS_4)_{\g1_4}.$ Each of the three $\bar J$ terms is an element of $N_*(B\g1_4)$.
We then have three versions of $\bar{TR} ( \partial \ \bar{J} ) = \partial (\bar{TR} \ \bar{J}) \in (\cS_4)_{\g1_4}$.
Given a cocycle $\alpha$, we evaluate each of these three boundaries on the symmetric 4-tensor $\alpha^{\otimes 4}$, using the $\cS$ algebra structure of $N^*(X)$ as described in \S4.3.2 and \S4.3.3.
We thus have
$$(\partial\ \bar{TR} \ \bar{J}) (\alpha^{\otimes 4}) = d\ (\bar{TR} \ \bar{J} (\alpha^{\otimes 4})) \in N^*(X).$$
This last expression, for each of the three $\bar{J}$'s, is what we abbreviated as $d (\bar{J} (\alpha))$ in the original statement of Theorem~\ref{Adem2}.
All we have done here is clarify precisely the operad mechanism alluded to in \S0.1.4 by which chains in $N_*(B\g1_4)$ act on cocycles $\alpha$.
 
\paragraph{4.4.5} We have implemented Table Reduction as part of a computer program, along with all the other ingredients needed to make explicit our coboundary formulae for Adem relations.
Note that as explained in \S4.3.3 and discussed in \S3.5.4, for each of the cochains
\begin{equation*}
J = J_\Psi(\tilde x_q \times \tilde x_p), \ J_\Psi(\tilde x_p \times \tilde x_q), \ J_{(23)}(\tilde x_q \times \tilde x_p) \in N_*(E\Sigma_4),
\end{equation*}
it is really the lifts $TR\ J \in \cS_4$ of the $\bar{TR}\ \bar{J} $ that we evaluate directly on $\alpha^{\otimes 4}$.
For fixed $(q,p)$, the computer output for the $TR\ J \in \cS_4$ is much smaller than the computer output for the $J \in N_*(E\Sigma_4)$.
For a cocycle $\alpha \in H^{-n}(X)$ with fixed small $n$, there is significant further reduction since many surjection generators in $\cS_4$ contribute 0 when evaluated on $\alpha^{\otimes 4}$.
Nonetheless, our coboundary formulae for Adem relations quickly get very lengthy, even for cocycles and relations of low degree.\\
 
This completes our discussion of the main Theorems \ref{Adem1} and \ref{Adem2}.

\section{Combinatorial Appendix}

The first two subsections of the final section of the paper provide proofs of the combinatorial facts about counting partitions that we used to compute the $\smallsmile_n$ operations in $N_*(B\Sigma_2)$ and in $N_*(E\Sigma_2)$.
Then in Subsection 5.3 we provide the details that exploit commutativity of Steenrod operations with suspension to deduce the standard form of Adem relations from the form given in Theorem~\ref{Adem2}.
We find it interesting that this argument proves somewhat more, namely that some of the coboundary terms in our unstable cochain level Adem relations also commute with cochain suspension.
It seems like a good question whether cochain versions of unstable Adem relations can be found that completely commute with cochain suspension.

\subsection{Counting Ordered Partitions}

\paragraph{5.1.1} We begin with the following well-known statement about counting ordered partitions.
The goal is to prove Combinatorial Facts 1 and 2 from \S2.3.2 that were used to evaluate $\smallsmile_n$ products in $N^*(B\g1_2)$ and $N^*(E\g1_2)$.\\

\begin{lem} \label{l:5.1}
	(i): The number of non-negative ordered partitions of an integer $N$ into $M$ summands equals the coefficient of $x^N$ in $(1 + x)^{N+M-1}$, hence is given by the binomial coefficient $\binom {N+M-1}{N}$.
	The number of such partitions is also (more obviously) given by the coefficient of $x^N$ in the expansion of $(1 +x + x^2 + \cdots )^M$.\\
		
	(ii): The number of positive ordered partitions of $N$ into $M$ summands is $\binom{N - 1} {N - M} = \binom{N-1}{M-1}$.
\end{lem}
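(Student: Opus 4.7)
The plan is a standard stars-and-bars argument for (i), followed by an elementary substitution that reduces (ii) to (i). Neither direction requires anything subtle; these are textbook identities and the only task is to arrange the bijections cleanly and record the generating-function identifications.

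For part (i), I would set up a bijection between non-negative ordered partitions $(a_1, \ldots, a_M)$ of $N$ and arrangements of $N$ indistinguishable stars and $M-1$ indistinguishable bars in a row of $N + M - 1$ symbols, where the bars cut the stars into $M$ consecutive groups whose sizes read off $a_1, \ldots, a_M$ from left to right. The number of such arrangements is
\begin{equation*}
\binom{N + M - 1}{M - 1} \, = \, \binom{N + M - 1}{N}.
\end{equation*}
For the generating-function reformulation, each summand $a_i \geq 0$ contributes a factor $1 + x + x^2 + \cdots$ to a product of $M$ copies, so the count equals
\begin{equation*}
[x^N]\,(1 + x + x^2 + \cdots)^M \, = \, [x^N]\,(1 - x)^{-M} \, = \, \binom{N + M - 1}{N},
\end{equation*}
which in turn equals $[x^N]\,(1+x)^{N + M - 1}$ by the usual binomial expansion.

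For part (ii), I would apply the change of variables $b_i = a_i - 1$, which is a bijection between positive ordered partitions $(a_1, \ldots, a_M)$ of $N$ and non-negative ordered partitions $(b_1, \ldots, b_M)$ of $N - M$. Applying (i) to the latter count gives
\begin{equation*}
\binom{(N - M) + M - 1}{N - M} \, = \, \binom{N - 1}{N - M} \, = \, \binom{N - 1}{M - 1},
\end{equation*}
as claimed.

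The main obstacle is essentially nonexistent: this is pure bookkeeping. The only small points to watch are degenerate cases (for instance $M > N$ in (ii), where there are no positive ordered partitions and the binomial coefficient $\binom{N-1}{M-1}$ vanishes in agreement), which I would dispatch in a single remark before the main argument.
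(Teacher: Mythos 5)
Your proposal is correct and follows essentially the same route as the paper: a stars-and-bars style bijection plus the generating-function identification for (i), and the shift-by-one bijection reducing positive to non-negative partitions for (ii). The only cosmetic difference is that the paper constructs its bijection directly from the monomial selections in $(1+x)^{N+M-1}$ (by deleting one $0$ from each interior block of $0$'s), whereas you count via stars and bars and then invoke $[x^N](1-x)^{-M}=\binom{N+M-1}{N}=[x^N](1+x)^{N+M-1}$; the combinatorial content is the same.
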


\begin{proof}
	(i): We give a correspondence between individual terms of degree $N$ in the expansion of $(1+x)^{N+M-1}$ and non-negative ordered partitions of $N$ into $M$ summands.
	The individual terms in the expansion can be viewed as sequences of 0's and 1's of length $N+M-1$, based on whether in each factor $(1+x) $ one selects the $1 = x^0$ or the $x = x^1$.
	Such a sequence consists of blocks of 0's and 1's.
	Remove one 0 from each block of 0's that lies between two blocks of 1's.
	Then the associated ordered partition of $N$ will consist of all the remaining 0's and positive integers corresponding to the number of 1's in each original block of 1's.\\

	For the second statement in part (i), in the expansion of $(1 +x + x^2 + \cdots )^M$, to get a term $x^N$ one must choose some power $x^{n_k}$ in the $k^{th}$ factor so that $n_1 + n_2 + \cdots + n_M = N$.\\
	
	(ii): We pass from non-negative partitions to positive partitions by adding 1 to each summand.
	Thus, the number of positive ordered partitions of $N$ into $M$ summands is the same as the number of non-negative partitions of $N - M$ into $M$ summands, which is the binomial coefficient $\binom{N-1} {N-M} = \binom{N-1}{M-1}$.
\end{proof}

\paragraph{5.1.2} Now let's consider ordered partitions of $N$ into $M$, $M+1$, or $M+2$ summands, with $M$ specific summands even.

\begin{lem} \label{l:5.2}
	(i): The number of non-negative partitions of $N$ into $M$ even summands is the coefficient of $X^N$ in the expansion $(1+x^2)^{N+M-1}$, which is also the coefficient of $x^N$ in $(1 + x^2 + x^4 + \cdots )^M$. \vspace{3pt}
	This is the same mod 2 as the coefficient of $x^N$ in $(1 + x + x^2 + \cdots)^{2M}$, which is $\binom{N + 2M -1}{N} = \binom{N+2M-1}{2M-1}$, by Lemma~\ref{l:5.1}(i).\\ 

	(ii): The number mod 2 of positive partitions of $N$ into $M$ even summands is $\binom{N - 1} {N - 2M} = \binom{N - 1}{2M - 1}.$\\

	(iii): The number mod 2 of positive partitions of $N$ into $M+1$ summands, all but the first even (or all but the last even) is $\binom{N-1}{N - (2M+1)} = \binom{N-1}{2M}.$ \\
	
	(iv): The number mod 2 of positive partitions of $N$ into $M+2$ summands, all but the first and last even, is $\binom{N-1}{N-(2M+2)} = \binom{N-1}{2M+1}$.
\end{lem}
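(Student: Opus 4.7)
The plan is to handle all four parts uniformly by generating functions, leveraging the single mod-$2$ identity
\begin{equation*}
(1-x)^2 \equiv 1 - x^2 \pmod 2, \qquad \text{equivalently} \qquad \frac{1}{(1-x^2)^M} \equiv \frac{1}{(1-x)^{2M}} \pmod 2.
\end{equation*}
Once this is in place, each claimed binomial coefficient drops out by reading off a single coefficient of the form $\binom{K+L-1}{L-1}$ from $(1-x)^{-L}$, exactly as in Lemma~\ref{l:5.1}(i).

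For part (i) I would start with the generating function for non-negative ordered partitions of $N$ into $M$ even summands: since each summand can be any non-negative even integer, its contribution is $1 + x^2 + x^4 + \cdots = 1/(1-x^2)$, so the full generating function is $1/(1-x^2)^M$. Reducing mod $2$ gives $1/(1-x)^{2M}$, and the coefficient of $x^N$ there equals $\binom{N+2M-1}{2M-1}$ by Lemma~\ref{l:5.1}(i) applied with $2M$ in place of $M$. The alternate form $(1+x^2)^{N/2+M-1}$ stated in the lemma arises by applying Lemma~\ref{l:5.1}(i) with the substitution $y = x^2$ to non-negative partitions of $N/2$ into $M$ parts.

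For the positive variants (ii)--(iv), the idea is that each "positive" constraint replaces a factor of the form $1/(1-x^k)$ by $x^k/(1-x^k)$, i.e.\ multiplies the generating function by $x^k$. Thus:
\begin{itemize}
\item For (ii), $M$ positive even summands give $x^{2M}/(1-x^2)^M \equiv x^{2M}/(1-x)^{2M} \pmod 2$, and the coefficient of $x^N$ is $\binom{(N-2M)+(2M-1)}{2M-1} = \binom{N-1}{2M-1}$.
\item For (iii), one further positive (unconstrained) summand contributes an additional factor $x/(1-x)$, giving $x^{2M+1}/(1-x)^{2M+1} \pmod 2$, whose $x^N$-coefficient is $\binom{N-1}{2M}$.
\item For (iv), two such extra factors give $x^{2M+2}/(1-x)^{2M+2} \pmod 2$, with $x^N$-coefficient $\binom{N-1}{2M+1}$.
\end{itemize}

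The main "obstacle" is entirely bookkeeping rather than substance: one must be careful that a summand which is simultaneously positive and even contributes $x^2/(1-x^2)$ (not $x/(1-x^2)$), and that the position of the non-even summands (first, last, or both) in (iii) and (iv) does not affect the generating function since multiplication of formal power series is commutative. Once these building blocks are identified, the four formulae are parallel specializations of the same single coefficient extraction from $(1-x)^{-L}$, and no further combinatorial input is needed.
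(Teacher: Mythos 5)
Your proof is correct and takes essentially the same route as the paper: both arguments rest on the mod-2 identity $(1 + x^2 + x^4 + \cdots)^M \equiv (1 + x + x^2 + \cdots)^{2M} \pmod 2$ and then extract a single coefficient via Lemma~\ref{l:5.1}(i). The only cosmetic difference is that you build positivity into the generating function through factors $x^2/(1-x^2)$ and $x/(1-x)$, whereas the paper counts non-negative partitions first and then shifts $N$ by $2M$, $2M+1$, or $2M+2$; these are the same computation.
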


\begin{proof}
	(i) and (ii): The first statement is essentially the same as Lemma~\ref{l:5.1}(i).
	We then count mod 2, and use the fact that $(1+ x^2 + x^4 + \cdots )^M \equiv (1 + x + x^2 + ...)^{2M}$ mod 2.
	To count positive even partitions, we add 2 to each non-negative term of a partition of $N - 2M$ into $M$ even pieces.\\ 
	
	(iii): First we count the number mod 2 of non-negative partitions of $N$ into $M+1$ terms where all terms except the first term, or all terms except the last term, are even.
	The count is the coefficient of $x^N$ in
	$$(1 + x + x^2 + \cdots)(1 +x^2 + x^4 + \cdots)^M \equiv (1 +x + x^2 + \cdots)^{2M+1}.$$
	The answer from Lemma~\ref{l:5.1} is $\binom{N+2M}{N}$.
	To get positive such partitions, subtract $2M+1$ from $N$.\\
	
	(iv): The mod 2 arithmetic first takes us to the coefficient of $x^N$ in
	$$(1+x + x^2 +\ldots)(1+x^2 + x^4 + \cdots)^M(1 + x + x^2 + \cdots) \equiv (1 +x + x^2 + \cdots)^{2M+2}$$
	to count non-negative such partitions.
	The answer is $\binom{N+2M+1}{N}$.
	To get positive such partitions subtract $2M+2$ from $N$, which means subtract 2 for each even piece and 1 for the other two pieces.
\end{proof}

\subsection{Proofs of Theorems \ref{t:binomial repeated} and \ref{t:2.2}}

\paragraph{5.2.1} We now deduce Combinatorial Fact 1 stated in \S2.3.2.
For odd $n = 2m + 1$, we first want to partition $N = j+1$ into $M = m+1$ positive even terms.
The count, explained in Lemma~\ref{l:5.2}(ii) above, is $\binom{j}{2m+1} = \binom{j}{n}$.\\

We have also covered the other part of Combinatorial Fact 1, where $N = i+1$ is partitioned into $m+2$ positive pieces, all but the first and last even. 
The answer from Lemma~\ref{l:5.2}(iv) is $\binom{i}{i-(2m+1)} = \binom{i}{n}$.\\

One gets in the same way the claims of Combinatorial Fact 2, when $n = 2m$ is even.
We partition $N = i+1$ into $m+1$ positive pieces, all but the first even.
The count from Lemma~\ref{l:5.2}(iii) is $\binom{i}{i-2m} = \binom{i}{n}$.\\  

The combinatorial facts just established imply Theorem~\ref{t:binomial repeated}.

\paragraph{5.2.2} Finally we prove Theorem~\ref{t:2.2}.
In the computation for $E\g1_2$, with $|I_1|$ even, we need when $n = 2m$ the number mod 2 of partitions of $N = i+1$ into $m+1$ positive even summands.
By Lemma~\ref{l:5.2}(ii) above, this is $\binom{i}{2m+1} = \binom{i}{n+1}$.
When $n = 2m+1$ and $|I_1|$ is even we need the number mod 2 of partitions of $N = i+1$ into $m+2$ positive summands, all but the last even.
This is also calculated in Lemma~\ref{l:5.2}(iii) as $\binom{i}{2m+2} = \binom{i}{n+1}$.
When $|I_1|$ is arbitrary, the previous counts were $\binom{i}{n}$ in both the $n$ even and $n$ odd cases.
This means the counts with $|I_1|$ odd is the sum mod 2, $\binom{i}{n+1} + \binom{i}{n} = \binom{i+1}{n+1}$, which implies the statements in Theorem~\ref{t:2.2}.

\subsection{Manipulations With the Relations}

\paragraph{5.3.1} We first want to use the relations in Theorem~\ref{Adem2} to prove the Adem relations in their usual form, expressing an inadmissible composition $Sq^aSq^b, a < 2b$, as a sum of admissible compositions,
$$Sq^aSq^b = \sum_i \binom{b-1-i}{a-2i}Sq^{a+b-i}Sq^i.$$
Of course we want cochain level versions of these relations, writing differences of specific cocycles as coboundaries.\\

To keep the notation here consistent with the statement of Theorem~\ref{Adem2}, we will grade cocycles in positive degrees.
In \S2.1.4, we pointed out that on the cocycle level $s(\alpha \smallsmile_i \alpha) = s\alpha \smallsmile_{i+1} s\alpha$, where $s$ is cochain suspension.
This is a strong form at the cocycle level of the commutativity of Squares with suspension.
It then suffices to prove the standard Adem relation on cocycles of very high degree $n$, since, by desuspending, it will also hold on cocycles of lower degree.
In fact, a specific cochain level relation in high degree desuspends to a cochain level relation in lower degrees.
It is unclear how such desuspended relations compare to the unstable relations of Theorem~\ref{Adem2}, although there do exist simply described algorithms for computing such desuspensions in terms of Surj operations or coface operations.\\

First, fix $a$ and $b$.
With $m$ large, we set $n = 2^m-1 + b,\ p = 2^m-1,\ q = 2n -a$.\footnote{We found these values of $n,p,q$ in some lecture notes of J.
Lurie, \cite{13lurie}, although they probably go back to Adem.}
Thus $a = 2n -q,\ b = n-p$.
Suppressing the cocycle $\alpha$ of degree $n$, the cohomology relation in Theorem~\ref{Adem2} is
\begin{equation*}
\sum_\ell \binom{q-\ell}{q-2\ell} Sq^{2n-p-\ell} Sq^{n-q+\ell} \ +\ \sum_\ell \binom{p-\ell}{p-2\ell} Sq^{2n-q-\ell} Sq^{n-p+\ell} \ =\ 0.
\end{equation*}
In the right-half sum, $\ell = 0$ gives $Sq^aSq^b$.
Since $p = 2^m -1$, all its base 2 expansion coefficients are 1's.
Recall that if $A = \sum a_i2^i$ and $B = \sum b_i2^i$, with $a_i, b_i \in \{0, 1\}$, then $\binom{A}{B} = \prod \binom{a_i}{b_i} \pmod{2}$.
Thus, for $\ell> 0$ all the binomial coefficients $\binom{p-\ell}{p-2\ell} \equiv 0 \pmod{2}$, since if $2^i$ is the greatest power of 2 dividing $\ell$ then one will see $\binom{0}{1}$ in the $i^{th}$ factor of the formula for $\binom{p-\ell}{p-2\ell}$.\\

In the left-half sum, set $i = n - q + \ell$.
Then the Square terms are $Sq^{a+b-i}Sq^i$.
The corresponding binomial coefficients are
\begin{align*}
\binom{q-\ell}{q-2\ell} & = \binom{q-\ell} {\ell} = \binom{n-i} {\ell} = \binom{n-i} {q-n+i} \\ & = \binom{n-i}{2n-q-2i} = \binom{n-i}{a-2i} = \binom{2^m +b -1 -i }{a-2i} \\ & \equiv \binom{b-1-i}{a-2i} \pmod2.
\end{align*}
The last congruence holds because $m$ is large, so the $2^m$ just adds an irrelevant 1 in the base 2 expansion.
This proves the usual Adem relation formula.\\

One can also show directly that linear combinations of the relations in Theorem~\ref{Adem2} for any fixed cocycle degree express inadmissible compositions $Sq^aSq^b(\alpha)$ as sums of admissible compositions, plus specific coboundaries.
But that argument is recursive and does not lend itself to determining Adem's general binomial coefficient formulae.
 
\paragraph{5.3.2} We want to make one more point about the method of \S5.3.1 that finds the exact Adem relations for inadmissible compositions $Sq^aSq^b$ by stabilizing the cocycle dimension and using commutativity of Squares with suspension.
 We fixed $a, b$ and chose $n = 2^m-1+b,\ p = 2^m - 1,\ q = 2n - a$.
The integer $m$ can vary.
We then examined the relation of Theorem~\ref{Adem2} for these choices of $(n, p, q)$.
But Theorem~\ref{Adem2} contains more than cohomology information.\footnote{This was the whole point of the paper!} Specifically, Theorem~\ref{Adem2} is a cocycle formula with coboundary terms.\\

Among the coboundary terms are $d(N_{q,p,n}(\alpha))$, affiliated with the right-half Square terms in Theorem~\ref{Adem2}, and $d(N_{p,q,n}(\alpha))$, affiliated with the left-half Square terms.
The affiliations are from Theorem~\ref{Adem1}.
The point of \S5.3.1 was that some binomial arithmetic showed that both halves of the Square terms in Theorem~\ref{Adem2} did not change as $m$ increased.
Specifically, these Square terms gave the usual Adem relations for $Sq^aSq^b$.
Our new point here is that also the expressions for $N_{q,p,n}(\alpha)$ and $N_{p,q,n}(\alpha)$, given in Theorem~\ref{Adem1}, are stable under cochain suspension, as $m$ varies.
First consider
$$N_{q,\, p,\, n}(\alpha) \ =\!\! \sum_{\substack{0< a\le \ell \in \Zee[1/2];\ \\ a\equiv \ell\ mod\ \Zee}}\binom{p-\ell-a}{p-2\ell} \binom{p- \ell +a}{p-2\ell}Sq^{n -p+\ell +a}(\alpha)\ \smallsmile_{q-p+2\ell+1}\ Sq^{n -p + \ell -a}(\alpha).$$

If $m$ increases by 1, then $n, p$ and $q-p$ increase by $2^m$, and $q$ increases by $2^{m+1}$.
So $n - p$ is unchanged.
Set $M = 2^m$.
Then
$$N_{q+2M,\, p+M,\, n}(s^M\alpha) = s^MN_{q,\, p,\, n}(\alpha).$$ 
The proof involves binomial arithmetic, similar to what was done in \S5.3.1, and also the cochain suspension formula for $\smallsmile_r$'s mentioned in \S2.1.4 that reads $sx \smallsmile_{r+1} sy = s(x \smallsmile_r y).$ The binomial arithmetic is easy and uses that if $p = 2^m -1$ and $0 < a < \ell \leq p/2$ then $$\binom{2^m+p-\ell -a}{2^m +p-2\ell}\binom{2^m +p-\ell+a}{2^m +p-2\ell} \equiv \binom{p-\ell-a}{p-2\ell} \binom{p-\ell +a}{p-2\ell} \pmod 2,$$ since the $2^m$'s just add irrelevant 1's on the left end of the base two representations of all the other numbers occurring in the binomial coefficients.\\
 
We also get
$$N_{p+M,\, q+2M,\, n}(s^M\alpha) = s^MN_{p,\, q,\, n}(\alpha).$$
This is trickier.
Note increasing $m$ by 1 decreases both $n-q$ and $p-q$ by $M = 2^m$.
Also, the binomial arithmetic is a bit trickier because in analyzing the summation with binomial coefficients one needs to work with a new variable $j$ with $\ell = M + j$.
This also takes care of the $\smallsmile_{p-q + 2\ell +1}$ product of two Square terms, so that the cochain suspension formula for $\smallsmile_r$'s still works out.\\

It is unclear, to put it mildly, how the other coboundary terms in Theorem~\ref{Adem2} behave under these same $2^m$-fold cochain suspensions.
It seems that it would be quite a nice result if a priori, preferred coboundary formulae for Adem relations could be found that are compatible under cochain suspension.
This would allow constructions of simplicial set three-stage Postnikov tower {\it spectra} that would seem interesting, extending the (easy) constructions of  simplicial set, suspension compatible, two-stage Postnikov tower spectra.
The first serious example would be the 2-type of the sphere spectrum, with homotopy groups $\Zee, \Zee/2, \Zee/2$ in degrees 0, 1, 2.

\nocite{24steenrodreduced, 26steenrodepstein}
\bibliographystyle{abbrv}
\bibliography{bibliography}

\begin{thebibliography}{10}

\bibitem{1adem}
J.~Adem.
\newblock The relations on {S}teenrod powers of cohomology classes. {A}lgebraic
  geometry and topology.
\newblock In {\em A symposium in honor of {S}. {L}efschetz}, pages 191--238.
  Princeton University Press, Princeton, N. J., 1957.

\bibitem{2barratteccles}
M.~G. Barratt and P.~J. Eccles.
\newblock {$\Gamma \sp{+}$}-structures. {I}. {A} free group functor for stable
  homotopy theory.
\newblock {\em Topology}, 13:25--45, 1974.

\bibitem{4bergerfresse}
C.~Berger and B.~Fresse.
\newblock Une d\'{e}composition prismatique de l'op\'{e}rade de
  {B}arratt-{E}ccles.
\newblock {\em C. R. Math. Acad. Sci. Paris}, 335(4):365--370, 2002.

\bibitem{3bergerfresse}
C.~Berger and B.~Fresse.
\newblock Combinatorial operad actions on cochains.
\newblock {\em Math. Proc. Cambridge Philos. Soc.}, 137(1):135--174, 2004.

\bibitem{6brumfielmorgan}
G.~Brumfiel and J.~Morgan.
\newblock The {P}ontrjagin dual of 3-dimensional spin bordism.
\newblock {\em arXiv preprint arXiv:1612.02860}, 2016.

\bibitem{7brumfielmorgan}
G.~Brumfiel and J.~Morgan.
\newblock The {P}ontrjagin dual of 4-dimensional spin bordism. arxiv e-prints.
\newblock {\em arXiv preprint arXiv:1803.08147}, 2, 2018.

\bibitem{8brumfielmorgan}
G.~Brumfiel and J.~Morgan.
\newblock Quadratic functions of cocycles and pin structures.
\newblock {\em arXiv preprint arXiv:1808.10484}, 2018.

\bibitem{9chataurlivernet}
D.~Chataur and M.~Livernet.
\newblock Adem-{C}artan operads.
\newblock {\em Comm. Algebra}, 33(11):4337--4360, 2005.

\bibitem{10dold}
A.~Dold.
\newblock Uber die steenrodschen kohomologieoperationen.
\newblock {\em Annals of Mathematics}, pages 258--294, 1961.

\bibitem{11gaiottokapustin}
D.~Gaiotto and A.~Kapustin.
\newblock Spin {TQFT}s and fermionic phases of matter.
\newblock {\em International Journal of Modern Physics A}, 31(28n29):1645044,
  2016.

\bibitem{12kapustin}
A.~Kapustin.
\newblock Symmetry protected topological phases, anomalies, and cobordisms:
  beyond group cohomology.
\newblock {\em arXiv preprint arXiv:1403.1467}, 2014.

\bibitem{13lurie}
J.~Lurie.
\newblock Topics in algebraic topology: The {S}ullivan conjecture (lectures
  1-5).
\newblock University Lecture, 2007.

\bibitem{14mandellpadic}
M.~A. Mandell.
\newblock {$E_\infty$} algebras and {$p$}-adic homotopy theory.
\newblock {\em Topology}, 40(1):43--94, 2001.

\bibitem{15mandellinteger}
M.~A. Mandell.
\newblock Cochains and homotopy type.
\newblock {\em Publ. Math. Inst. Hautes \'{E}tudes Sci.}, (103):213--246, 2006.

\bibitem{17maygeneeral}
J.~P. May.
\newblock A general algebraic approach to {S}teenrod operations.
\newblock In {\em The {S}teenrod {A}lgebra and its {A}pplications ({P}roc.
  {C}onf. to {C}elebrate {N}. {E}. {S}teenrod's {S}ixtieth {B}irthday,
  {B}attelle {M}emorial {I}nst., {C}olumbus, {O}hio, 1970)}, Lecture Notes in
  Mathematics, Vol. 168, pages 153--231. Springer, Berlin, 1970.

\bibitem{16mayoperads}
J.~P. May.
\newblock Operads, algebras and modules.
\newblock In {\em Operads: {P}roceedings of {R}enaissance {C}onferences
  ({H}artford, {CT}/{L}uminy, 1995)}, volume 202 of {\em Contemp. Math.}, pages
  15--31. Amer. Math. Soc., Providence, RI, 1997.

\bibitem{18mccluresmith}
J.~E. McClure and J.~H. Smith.
\newblock Multivariable cochain operations and little {$n$}-cubes.
\newblock {\em J. Amer. Math. Soc.}, 16(3):681--704, 2003.

\bibitem{20medinaaxiomatic}
A.~M. Medina-Mardones.
\newblock An axiomatic characterization of {S}teenrod's cup-$ i $ products.
\newblock {\em arXiv preprint arXiv:1810.06505}, 2018.

\bibitem{19medinacartan}
A.~M. Medina-Mardones.
\newblock An effective proof of the {C}artan formula: the even prime.
\newblock {\em J. Pure Appl. Algebra}, 224(12):106444, 18, 2020.

\bibitem{medina2020prop1}
A.~M. Medina-Mardones.
\newblock A finitely presented {$E_\infty$}-prop {I}: algebraic context.
\newblock {\em High. Struct.}, 4(2):1--21, 2020.

\bibitem{21moshertangora}
R.~E. Mosher and M.~C. Tangora.
\newblock {\em Cohomology operations and applications in homotopy theory}.
\newblock Harper \& Row, Publishers, New York-London, 1968.

\bibitem{22smith}
J.~R. Smith.
\newblock Operads and algebraic homotopy.
\newblock {\em arXiv preprint math/0004003}, 2000.

\bibitem{23steenrodproducts}
N.~E. Steenrod.
\newblock Products of cocycles and extensions of mappings.
\newblock {\em Ann. of Math. (2)}, 48:290--320, 1947.

\bibitem{24steenrodreduced}
N.~E. Steenrod.
\newblock Reduced powers of cohomology classes.
\newblock {\em Ann. of Math. (2)}, 56:47--67, 1952.

\bibitem{25steenrodsymmetric}
N.~E. Steenrod.
\newblock Cohomology operations derived from the symmetric group.
\newblock {\em Comment. Math. Helv.}, 31:195--218, 1957.

\bibitem{26steenrodepstein}
N.~E. Steenrod and D.~B. Epstein.
\newblock {\em Cohomology operations}.
\newblock Number~50. Princeton University Press, 1962.

\end{thebibliography}

\end{document}